\documentclass[12pt,intlimits]{article}

\usepackage{amssymb}
\usepackage{comment}
\usepackage{mathrsfs}
\usepackage{amsthm}
\usepackage{amsmath}
\usepackage[dvipsnames,usenames]{xcolor}

\usepackage{lineno,hyperref}

\usepackage[normalem]{ulem} 

\usepackage{tikz-cd}

\usepackage{enumitem}  

\DeclareFontFamily{U}{mathb}{\hyphenchar\font45}
\DeclareFontShape{U}{mathb}{m}{n}{
<-6> mathb5 <6-7> mathb6 <7-8> mathb7
<8-9> mathb8 <9-10> mathb9
<10-12> mathb10 <12-> mathb12
}{}
\DeclareSymbolFont{mathb}{U}{mathb}{m}{n}
\DeclareMathSymbol{\llcurly}{\mathrel}{mathb}{"CE}
\DeclareMathSymbol{\ggcurly}{\mathrel}{mathb}{"CF}

\DeclareFontFamily{U}{mathx}{\hyphenchar\font45}
\DeclareFontShape{U}{mathx}{m}{n}{
      <5> <6> <7> <8> <9> <10>
      <10.95> <12> <14.4> <17.28> <20.74> <24.88>
      mathx10
      }{}
\DeclareSymbolFont{mathx}{U}{mathx}{m}{n}
\DeclareFontSubstitution{U}{mathx}{m}{n}
\DeclareMathAccent{\widecheck}{0}{mathx}{"71}
\DeclareMathAccent{\wideparen}{0}{mathx}{"75}

\newcommand\Periwinklesout{\bgroup\markoverwith{\textcolor{Periwinkle}{\rule[0.5ex]{2pt}{0.4pt}}}\ULon}
\newcommand\violetsout{\bgroup\markoverwith{\textcolor{violet}{\rule[0.5ex]{2pt}{0.4pt}}}\ULon}

\makeatletter
\DeclareFontFamily{OMX}{MnSymbolE}{}
\DeclareSymbolFont{MnLargeSymbols}{OMX}{MnSymbolE}{m}{n}
\SetSymbolFont{MnLargeSymbols}{bold}{OMX}{MnSymbolE}{b}{n}
\DeclareFontShape{OMX}{MnSymbolE}{m}{n}{
    <-6>  MnSymbolE5
   <6-7>  MnSymbolE6
   <7-8>  MnSymbolE7
   <8-9>  MnSymbolE8
   <9-10> MnSymbolE9
  <10-12> MnSymbolE10
  <12->   MnSymbolE12
}{}
\DeclareFontShape{OMX}{MnSymbolE}{b}{n}{
    <-6>  MnSymbolE-Bold5
   <6-7>  MnSymbolE-Bold6
   <7-8>  MnSymbolE-Bold7
   <8-9>  MnSymbolE-Bold8
   <9-10> MnSymbolE-Bold9
  <10-12> MnSymbolE-Bold10
  <12->   MnSymbolE-Bold12
}{}

\let\llangle\@undefined
\let\rrangle\@undefined
\DeclareMathDelimiter{\llangle}{\mathopen}%
                     {MnLargeSymbols}{'164}{MnLargeSymbols}{'164}
\DeclareMathDelimiter{\rrangle}{\mathclose}%
                     {MnLargeSymbols}{'171}{MnLargeSymbols}{'171}
\makeatother

\makeatletter
\newcommand*{\textoverline}[1]{$\overline{\hbox{#1}}\m@th$}
\makeatother

\makeatletter
\DeclareTextCommand{\textprime}{\encodingdefault}{%
  \mbox{$\m@th'\kern-\scriptspace$}%
}
\makeatother

\newcommand{\real}{\mathbb{R}}
\newcommand{\mytorus}{\mathcal{T}}
\newcommand{\mySemi}{\mathbb{S}}
\newcommand{\cP}{P}

\newtheorem{theorem}{Theorem}[section]
\newtheorem{lemma}[theorem]{Lemma}
\newtheorem{corollary}[theorem]{Corollary}
\newtheorem{proposition}[theorem]{Proposition}

\newtheorem{definition}[theorem]{Definition}

\newcommand{\abs}[1]{\ensuremath{\lvert#1\rvert}}
\newcommand{\norm}[1]{\ensuremath{\lVert#1\rVert}}

\newcommand{\RR}{\ensuremath{\mathbb{R}}}
\newcommand{\NN}{\ensuremath{\mathbb{N}}}

\newcommand{\ZZ}{\ensuremath{\mathbb{Z}}}

\newcommand{\Id}{\ensuremath{\mathrm{Id}}}

\newcommand{\e}{\ensuremath{\mathbf{e}}}

\newcommand{\taumin}{\ensuremath{\tau_{\mathrm{min}}}}
\newcommand{\taumax}{\ensuremath{\tau_{\mathrm{max}}}}

\newcommand{\dd}{\ensuremath{\mathrm{d}}}
\newcommand{\DD}{\ensuremath{\mathrm{D}}}

\DeclareMathOperator{\dist}{dist}
\DeclareMathOperator{\tildedist}{\widetilde{\mathrm{dist}}}

\DeclareMathOperator{\diag}{diag}

\DeclareMathOperator{\card}{card}

\numberwithin{equation}{section}

\newcommand{\ev}{\ensuremath{\mathbf{e}}}

\begin{document}

\author{Stephen Baigent \\
Department of Mathematics \\
University College London
\\
Gower Street, London WC1E 6BT
\\
United Kingdom of Great Britain and Northern Ireland
\\[2ex]
and
\\[2ex]
Janusz Mierczy\'nski \\
Faculty of Pure and Applied Mathematics \\
Wroc{\l}aw University of Science and Technology \\
Wybrze\.{z}e Wyspia\'nskiego 27
\\
PL 50-370 Wroc{\l}aw
\\
Poland
}

\title{Time-periodic carrying simplex for a competitive system of Carathéodory ODEs}
\date{}

\maketitle

Competitive differential equations arise naturally in ecology, epidemiology, and population biology whenever  species  share common resources in such a way that an increase in any one species’ density causes a decrease in the per-capita growth rate of every other species. An important feature of many of these competitive ordinary differential equations, the carrying simplex  is an invariant manifold of codimension-1 that attracts all nonzero orbits. It was first studied by Morris Hirsch \cite{HirschCS} for $C^1$ autonomous competitive ODEs on $\real_+^d:=[0,\infty)^d$:
\[
\dot{x}_i = x_i f_i(t, x), \mbox{ where } \frac{\partial f_i(x)}{\partial x_j} \leq 0, \; i,j=1,\ldots,d, \, x\in \real_+^d.
\]
Under conditions that made good ecological sense, Hirsch showed the existence of a Lipschitz invariant manifold of dimensions $d-1$ that attracted all nonzero points and that was unordered, which essentially means decreasing in all directions. All omega limits belonged to the manifold and so the original problem could be reduced to the study of a system of one dimension less. M. L. Zeeman called this manifold the carrying simplex as it could be viewed as an extension of the carrying capacity defined for a single species as the maximum stable population density the species that the assumed constant environment could support.

Subsequently the notion of the carrying simplex \cite{SmithSIAM,SmithJDE,J-M-W,dMS,WJ2002} was extended to Poincaré maps which arose naturally in the study of $T$\nobreakdash-\hspace{0pt}periodic $C^1$ competitive ODEs
\[
\dot{x}_i = x_i f_i(x), \mbox{ where } \frac{\partial f_i(x)}{\partial x_j} \leq 0, f_i(t,x) = f_i(t+T,x),  \; i,j=1,\ldots,d, \, x\in \real_+^d.
\]
Later maps with similar properties to these Poincar\'{e} maps were studied in their own right and there is a  growing literature on the carrying simplex of maps historically called autonomous `competitive maps', but that more recently have been referred to retrotone maps (see, for example, \cite{takac1990, Wi, WJ2002, DWY, HirschiBD, Kul2006, Kul2010, RH, Kul2018, NRH2018, Gyll2018, Gyll2019, B-M-JDEA, Hou21, M_convex, Jamieson-Merino-2023}).

There is also a growing literature on the existence of a suitably-defined carrying simplex for nonautonomous maps. The pioneering paper in this direction was by Shen and Wang \cite{Shen-Wang}, where they used skew-product flows to study convergence to a carrying simplex for nonautonomous (not necessarily periodic) and even random $C^1$ competitive systems, a class of competitive systems that is well-suited to many biological systems where there are either seasonal changes, such as in birth and death rates, or random fluctuations, such as in the properties of the environment.

Construction of periodic family of carrying simplices has been applied in the investigation of ecological models with seasonal succession.  The first paper, for two species, appears to be by Hsu and Zhao \cite{Hsu-Zhao-12}, in which  a complete classification of the dynamics of a planar Lotka--Volterra competition model with seasonal succession was obtained. Here the authors considered the period map, that is the flow map evaluated at the period and use monotonicity properties of this map to show that  all orbits of  the period map converge to a fixed point. Based upon the dynamics of the period map and the properties of monotone systems they found conditions of extinction and coexistence and competitive exclusion. However they did not consider the existence of a carrying simplex.

The earliest paper to consider the carrying simplex for nonautonomous competitive systems was by Shen and Wang \cite{Shen-Wang} who also treated random competitive systems in the same context. Using skew product flows defined on $\real_+^2 \times H(g)$, where $H(g)$ is the hull of the time-dependent vector field $g$ they established existence of a carrying simplex defined as a globally attracting (invariant) subset of $\real_+^2 \times H(g)$ under their skew-product flow, and proved that their carrying simplex satisfied the key properties of carrying simplices for autonomous competitive ODEs.

More recently, in \cite{Niu-Wang-Xie-DCDSB-2021} Niu, Wang and Xie extended the planar competition with seasonal succession model of Hsu and Zhao to an arbitrary number of species, noting that the model equations could be viewed as periodic Lotka-Volterra equations (i.e. the model parameters were periodic), and established the existence of a carrying simplex as a codimension-1 invariant manifold that attracts all nonzero orbits, but it appears that the carrying simplex $\Sigma$ referred to is that of the Poincar\'{e} map, and $\Sigma$ is not invariant under the full continuous-time flow. Using their carrying simplex they provided a simpler analysis of the planar problem addressed in Hsu and Zhao's pioneering paper \cite{Hsu-Zhao-12}. Subsequently Niu, Wang and Xie \cite{Niu-Wang-Xie-SIAM-2025} and \cite{NWX2} studied the three-dimensional analogue of the problem studied in \cite{Hsu-Zhao-12}.

In the present paper, we consider time-periodic competitive differential equations of Kolmogorov type, but we relax the regularity of the time-dependent per-capita growth rates assumed in \cite{Shen-Wang,Hsu-Zhao-12,Niu-Wang-Xie-DCDSB-2021,NWX2}, and impose much weaker regularity, namely Carathéodory conditions (Section \ref{Sec2}). The relaxation of regularity introduces challenges in establishing global existence of solutions, an issue not encountered in the mentioned four papers. We establish retrotonicity conditions for the periodic flow and define the carrying simplex via the compact attractor of compact sets of an extended flow on $\mathbb{T} \times [0,\infty)^d$ where $\mathbb{T}$ is the interval $[0,T]$ with endpoints identified, and $T$ is the period, and show that it satisfies all the properties of the carrying simplex for autonomous systems discussed in our earlier paper \cite{B-M-JDEA}.

\section{Notation and definitions}\label{sec:sec-notation}
Wherever we use the term `map' we mean a continuous function.

$\lVert \cdot \rVert$ stands for the Euclidean norm, and $\lVert \cdot \rVert_1$ stands for the $\ell_1$\nobreakdash-\hspace{0pt}norm, in $\mathbb{R}^d$.

For $x \in \RR^d$ and $Y \subset \RR^d$ we write
\begin{equation*}
    \dist(x, Y) := \inf\,\{\, \norm{x - y} : y \in Y \,\}.
\end{equation*}
\begin{definition}[Hausdorff metric]
For nonempty compact $X, Y \subset \RR^d$ we denote by $d_H(X, Y)$ their \emph{Hausdorff distance},
\begin{equation*}
    d_H(X, Y) := \max\{ \sup\,\{\, \dist(x, Y) : x \in X \,\}, \sup\,\{\, \dist(y, X) : y \in Y \,\}\}.
\end{equation*}
\end{definition}
For $X \subset \RR^d$ we denote by $\mathcal{P}(X)$ the family of all nonempty compact subsets of $X$.

Below, we state some facts from topology which will be useful in the sequel.
\begin{proposition} \label{prop:hyperspace-compact}
    Assume that $X \subset \RR^d$ is compact.  Then $(\mathcal{P}(X), d_H)$ is a compact metric space.
\end{proposition}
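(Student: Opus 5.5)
The plan is to show that $(\mathcal{P}(X), d_H)$ is complete and totally bounded, since a metric space with both properties is compact. I first check that $d_H$ is a metric on $\mathcal{P}(X)$. It is finite because $X$ is bounded. Symmetry is built into the definition. If $d_H(A,B)=0$, then every $a\in A$ has $\dist(a,B)=0$, so $a\in B$ because $B$ is closed; by symmetry $A=B$. For the triangle inequality I use $\dist(a,C)\le \norm{a-b}+\dist(b,C)$ for every $b\in B$, then take the infimum over $b$ and the supremum over $a$.

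\emph{Total boundedness.} Fix $\varepsilon>0$. Since $X$ is compact, I choose a finite $\varepsilon$-net $F=\{x_1,\dots,x_N\}\subset X$. The candidate $\varepsilon$-net in $\mathcal{P}(X)$ is the finite family of nonempty subsets of $F$. Given $A\in\mathcal{P}(X)$, let
\[
F_A:=\{x_i : \dist(x_i,A)<\varepsilon\}.
\]
This set is nonempty, because any $a\in A$ lies within $\varepsilon$ of some $x_i$. Every $a\in A$ is within $\varepsilon$ of a point of $F$, and that point lies in $F_A$. Conversely, every point of $F_A$ is within $\varepsilon$ of $A$ by definition. Hence $d_H(A,F_A)\le\varepsilon$.

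\emph{Completeness.} This is the main step. Let $(A_n)$ be Cauchy in $\mathcal{P}(X)$. My candidate limit is the upper limit
\[
A:=\bigcap_{n}\overline{\bigcup_{k\ge n}A_k}.
\]
It is a decreasing intersection of nonempty compact subsets of $X$, hence nonempty and compact. Fix $\varepsilon>0$ and choose $N$ with $d_H(A_m,A_n)<\varepsilon$ for all $m,n\ge N$.

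First, every $a\in A$ is the limit of points $a_{k_j}\in A_{k_j}$ with $k_j\to\infty$. For $n\ge N$ and $k_j\ge N$ we have $\dist(a_{k_j},A_n)<\varepsilon$, so $\dist(a,A_n)\le\varepsilon$.

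Second, I need $\dist(x,A)$ small for every $x\in A_n$, and this is where the Cauchy property has to be used carefully. Pick indices $N=N_0<N_1<N_2<\cdots$ with $d_H(A_m,A_{m'})<\varepsilon 2^{-j}$ for $m,m'\ge N_j$. Starting from $x\in A_n$ with $n\ge N$, I build a chain $y_j\in A_{N_j}$ (for $j\ge1$, with $N_1>n$ if necessary) whose successive distances are bounded by $\varepsilon 2^{-j+1}$. This chain is Cauchy in $X$, so it converges to some $y$. By construction $y\in A$, and $\norm{x-y}\le 2\varepsilon$. So $\dist(x,A)\le 2\varepsilon$.

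Combining the two estimates gives $d_H(A_n,A)\le 2\varepsilon$ for all $n\ge N$, hence $A_n\to A$. Together with total boundedness, this proves that $(\mathcal{P}(X), d_H)$ is compact.

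As an alternative route, compactness of $X$ lets me skip completeness and show directly that every sequence in $\mathcal{P}(X)$ has a convergent subsequence. By total boundedness, a diagonal argument produces a Cauchy subsequence, and the completeness argument above then gives its limit.
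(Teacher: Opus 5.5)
Your proof is correct and complete. The nonempty subsets of a finite $\varepsilon$-net give total boundedness, and the upper limit $A=\bigcap_n\overline{\bigcup_{k\ge n}A_k}$ together with the geometric chain $y_j\in A_{N_j}$ yields $d_H(A_n,A)\le 2\varepsilon$, which gives completeness.

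The paper does not prove the statement itself; it only refers to Ambrosio--Tilli and Kechris. Your complete-plus-totally-bounded argument is the standard textbook one, as in Kechris. Your closing ``alternative route'' is not genuinely different, since it still relies on the completeness step.
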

For a proof, see, e.g., \cite[Thm.~4.4.15  on p.~74]{AT}, or \cite[Thm.~4.25 on p.~26]{Kechris}.

\begin{definition}[\textup{(}Painlevé--\textup{)}Kuratowski convergence]
Let $X \subset \RR^d$ be compact.  We say that $A \in \mathcal{P}(X)$ is the \emph{\textup{(}Painlevé--\textup{)}Kuratowski} limit of a sequence $(A_m)_{m = 0}^{\infty} \subset \mathcal{P}(X)$ if the following conditions are satisfied.
\begin{enumerate}
    \item[\textup{(PK1)}]
    For each $a \in A$ there is a sequence $(a^{(m)})_{m = 0}^{\infty}$, $a^{(m)} \in A_m$ for each $m \in \NN$, such that $\lim\limits_{m \to \infty} a^{(m)} = a$.
    \item[\textup{(PK2)}]
    For any sequences $(m_k)_{k = 0}^{\infty}$ and $(a^{(m_k)})_{k = 0}^{\infty}$, $a^{(m_k)} \in A_{m_k}$ for each $k \in \NN$, such that if $\lim\limits_{k \to \infty} m_k = \infty$ and $\lim\limits_{k \to \infty} a^{(m_k)} = a$ then there holds $a \in A$.
\end{enumerate}
\end{definition}
\begin{proposition} \label{prop:Kuratowski-Hausdorff}
    Assume $X \subset \RR^d$ to be compact.  Then $(A_m)_{m = 0}^{\infty} \subset \mathcal{P}(X)$ converges to $A \in \mathcal{P}(X)$ in the Hausdorff metric if and only~if $A$ is the Kuratowski limit of $(A_m)_{m = 0}^{\infty}$.
\end{proposition}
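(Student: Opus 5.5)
We prove the two implications separately. The only tools needed are compactness of $X$ (so every sequence in $X$ has a convergent subsequence) and the elementary fact that $x \mapsto \dist(x, Y)$ is $1$-Lipschitz.

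\emph{Hausdorff convergence implies Kuratowski convergence.} Suppose $d_H(A_m, A) \to 0$.

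For (PK1), fix $a \in A$. Since $A_m$ is compact, we may choose $a^{(m)} \in A_m$ with $\norm{a - a^{(m)}} = \dist(a, A_m) \le d_H(A_m, A)$. The right-hand side tends to $0$, so $a^{(m)} \to a$.

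For (PK2), take $m_k \to \infty$ and $a^{(m_k)} \in A_{m_k}$ with $a^{(m_k)} \to a$. Then
\begin{equation*}
\dist(a, A) \le \norm{a - a^{(m_k)}} + \dist(a^{(m_k)}, A) \le \norm{a - a^{(m_k)}} + d_H(A_{m_k}, A) \to 0.
\end{equation*}
Hence $\dist(a, A) = 0$, and since $A$ is closed, $a \in A$.

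\emph{Kuratowski convergence implies Hausdorff convergence.} Suppose $A$ is the Kuratowski limit of $(A_m)$, and assume for contradiction that $d_H(A_m, A) \not\to 0$. Then there are $\varepsilon > 0$ and a subsequence $m_k \to \infty$ with $d_H(A_{m_k}, A) \ge \varepsilon$. Passing to a further subsequence, one of the two suprema in the definition of $d_H$ is at least $\varepsilon$ for every $k$. By compactness, each supremum is attained.

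\emph{Case 1:} there are $a^{(m_k)} \in A_{m_k}$ with $\dist(a^{(m_k)}, A) \ge \varepsilon$. Since $X$ is compact, a further subsequence converges to some $a \in X$. By (PK2), $a \in A$. On the other hand, continuity of $\dist(\cdot, A)$ gives $\dist(a, A) \ge \varepsilon$, a contradiction.

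\emph{Case 2:} there are $b_k \in A$ with $\dist(b_k, A_{m_k}) \ge \varepsilon$. Since $A$ is compact, a further subsequence gives $b_k \to b \in A$. By (PK1) there are $a^{(m)} \in A_m$ with $a^{(m)} \to b$. Then
\begin{equation*}
\dist(b_k, A_{m_k}) \le \norm{b_k - b} + \norm{b - a^{(m_k)}} \to 0,
\end{equation*}
contradicting $\dist(b_k, A_{m_k}) \ge \varepsilon$.

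Both cases are impossible, so $d_H(A_m, A) \to 0$.

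The main point of care is this second direction. The subsequence extraction is needed, and (PK1) must be applied along the original full sequence, with the points $a^{(m)}$ then evaluated along the subsequence $m_k$. Compactness of $X$ is used precisely in Case 1, where it guarantees that the points $a^{(m_k)}$ have a convergent subsequence.
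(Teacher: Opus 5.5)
Your proof is correct and complete. The paper gives no argument of its own here and simply refers to \cite[Prop.~4.4.14 on p.~73]{AT}. Your direct proof is the standard one, so it can stand in for the citation. In it, Hausdorff $\Rightarrow$ Kuratowski uses only that $A$ is closed and the $A_m$ are compact. The converse goes by contradiction via sequential compactness: compactness of $X$ enters only in your Case~1 and compactness of $A$ in Case~2.
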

For a proof, see~\cite[Prop.~4.4.14 on p.~73]{AT}.

As in the sequel all sequences $(A_m)$ of sets considered satisfy the assumptions of Proposition~\ref{prop:Kuratowski-Hausdorff}, we will write $\lim\limits_{m \to \infty} A_m = A$.

\begin{proposition} \label{prop:hyperspace-continuous}
    Assume that $X, Y \subset \RR^d$ are compact, and let $h \colon X \to Y$ be continuous.  Then the map $h^{\star} \colon \mathcal{P}(X) \to \mathcal{P}(Y)$ defined as
    \begin{equation*}
        h^{\star}(A) := \{\, h(a) : a \in A  \,\}, \qquad A \in \mathcal{P}(X),
    \end{equation*}
    is continuous.
\end{proposition}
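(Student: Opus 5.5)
Since $X$ is compact and $h$ is continuous, $h$ is uniformly continuous on $X$. This is the whole argument. First, $h^{\star}$ is well defined: for $A \in \mathcal{P}(X)$ the image $h(A)$ is nonempty and compact, being the continuous image of a nonempty compact set, so $h^{\star}(A) \in \mathcal{P}(Y)$.

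Next I show that $h^{\star}$ is uniformly continuous, in fact with the same modulus as $h$. Fix $\epsilon > 0$ and pick $\delta > 0$ such that $\norm{x - x'} < \delta$, $x, x' \in X$, implies $\norm{h(x) - h(x')} < \epsilon$. Let $A, B \in \mathcal{P}(X)$ with $d_H(A, B) < \delta$, and take any point of $h(A)$, say $h(a)$ with $a \in A$. Then $\dist(a, B) < \delta$. Because $B$ is compact, the infimum is attained, so there is $b \in B$ with $\norm{a - b} < \delta$. Hence $\norm{h(a) - h(b)} < \epsilon$, and so $\dist(h(a), h(B)) < \epsilon$.

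Taking the supremum over $a \in A$ gives $\sup\{\dist(y, h(B)) : y \in h(A)\} \le \epsilon$. The sup over a compact set is attained, but the bound $\le \epsilon$ is all that is needed. By the symmetric argument with $A$ and $B$ exchanged, $d_H(h^{\star}(A), h^{\star}(B)) \le \epsilon$. This proves continuity, indeed uniform continuity, of $h^{\star}$.

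There is no real obstacle here. The only points needing care are using compactness of $X$ to obtain uniform continuity, and noting that a strict or non-strict inequality after taking suprema is harmless.
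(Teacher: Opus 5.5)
Your proof is correct and complete. The paper gives no proof of its own; it only cites Kechris, Section I.4F, where hyperspaces of compact sets carry the Vietoris topology. In that setting continuity is essentially immediate. The preimage under $A \mapsto h(A)$ of a subbasic open set $\{C : C \subset U\}$ is $\{A : A \subset h^{-1}(U)\}$. The preimage of $\{C : C \cap U \neq \emptyset\}$ is $\{A : A \cap h^{-1}(U) \neq \emptyset\}$. Both are again open.

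That route needs neither compactness of $X$ nor a metric on $X$. However, to connect it with the paper's Hausdorff-metric formulation, one must also know that $d_H$ induces the Vietoris topology.

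Your argument stays entirely within the Hausdorff-metric framework the paper actually uses and is self-contained. It also gives the explicit conclusion that $h^{\star}$ is uniformly continuous with the same modulus as $h$. Plain uniform continuity alone would already follow from continuity and the compactness of $\mathcal{P}(X)$ (Proposition~\ref{prop:hyperspace-compact}).

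One small remark: you do not need compactness of $B$ to produce $b$. The inequality $\dist(a, B) < \delta$ already yields some $b \in B$ with $\norm{a - b} < \delta$ by the definition of the infimum.
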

See~\cite[I.4F on pp.~24--28]{Kechris}.

\bigskip
$C_{+} := \{x\in \real^d: x_i\geq 0, \ i=1,\ldots,d\}$, and $C_{++} := \{x\in \real^d: x_i > 0, \ i=1,\ldots,d\}$ will denote convex cones. $C_+$ is often referred to as the \emph{first orthant} and $C_{++}$ is its interior.  $\partial C_{+} = C_{+} \setminus C_{++}$ is called the \emph{boundary} of $C_{+}$ (indeed, it is the boundary of $C_{+}$ in $\RR^d$).

$\ev_i$ is the vector with a one at the $i$\nobreakdash-\hspace{0pt}th position and zeros elsewhere and $\ev =\sum_{i=1}^d\ev_i$, $\hat{\ev}=\ev/\sqrt{d}$.  $\NN =  \{0,1, 2, 3, \dots\}$.

For a subset $I \subset \{1, \ldots,d\}$, let $(C_I)_{+} := \{\, x \in C_{+}: x_i = 0$~for~all~$i \in \{1, \ldots,d\} \setminus I\,\}$ denote a \emph{$k$\nobreakdash-\hspace{0pt}dimensional face} of $C_{I}$, where $k = \card{I}$, and let $(C_I)_{++} := \{\, x \in (C_I)_{+}: x_i > 0$~for~all~$i \in I\,\}$ denote the \emph{relative interior} of $(C_I)_{+}$. A $1$\nobreakdash-\hspace{0pt}dimensional face is referred to as an \emph{$i$\nobreakdash-\hspace{0pt}th axis}, where $I = \{i\}$.

$\bar{A}$ denotes the closure of $A$.

$\Delta := \{x \in C_{+}:\sum\limits_{i=1}^d x_i=1\}$ denotes the standard probability $(d - 1)$\nobreakdash-\hspace{0pt}simplex.  For $I \subset \{1, \ldots,d\}$, $\Delta_{I} := \Delta \cap (C_I)_{+}$.

\begin{definition}[Order relations]
For $x, y \in C_{+}$, we write $x \le  y$ if $x_i \le  y_i$ for all $i=1,\ldots,d$, and $x \ll  y$ if $x_i < y_i$ for all $i=1,\ldots,d$. If $x \le  y$ but $x \ne  y$ we write $x <  y$. The reverse relations are denoted by $\geq, >,\gg$. Two points $x,y\in C_+$ are said to be \emph{order-related} if either $x\leq y$ or $y\leq x$.
\end{definition}

\begin{definition}[Order interval]
For $x, y \in C_{+}$ such that $x \le y$ we define the \emph{order interval} as
\begin{equation*}
   [x, y] := \{\, z \in C_{+} : x \le z \le y \,\}.
\end{equation*}
\end{definition}

\begin{definition}[Orthogonal projection]

For $I \subset \{1, \dots, d\}$, by $\pi_I$ we understand the orthogonal projection of $C_{+}$ onto $(C_I)_{+}$. $\Pi$ denotes the orthogonal projection along $\ev$ onto $V := \ev^{\perp}$.  $\Pi u = u - (u \cdot \hat{\ev}) \hat{\ev} = u - (u \cdot \ev) \ev/d$.
\end{definition}


%
%
\begin{definition}[Invariance of sets under maps]
$A \subset C_{+}$ is \emph{$F$\nobreakdash-\hspace{0pt}forward invariant} if $F(A) \subset A$, and \emph{$F$\nobreakdash-\hspace{0pt}invariant} if $F(A) = A$.
\end{definition}
\begin{definition}[Orbits]\mbox{}
\begin{description}
\item[Forward orbit] For $x \in C_{+}$ we denote its \emph{forward orbit}, $O^{+}(x)$, as
\begin{equation*}
    O^{+}(x) := \{\, F^n(x) : n \in \NN \, \}.
\end{equation*}
\item[Backward orbit]
By a \emph{backward orbit} of $x \in C_{+}$ we understand a set $\{\dots, x_{-n-1}, x_{-n}, \dots, \break x_{-2}, x_{-1}, x_{0}\}$ such that $x_0 = x$ and $x_{-n} = F(x_{-n-1})$ for all $n \in \NN$.
\item[Total orbit]
A \emph{total orbit} of $x \in C_{+}$ is the union of a backward orbit of $x$ and the forward orbit, $O^{+}(x)$.
\end{description}
\end{definition}

\begin{definition}
Let $\mathcal{A}$ be a class of subsets of $C_{+}$.  We say that a nonempty compact $F$\nobreakdash-\hspace{0pt}invariant $D \subset C_{+}$ is a \emph{compact attractor, for the dynamical system $(F^n)_{n = 0}^{\infty}$, of the class $\mathcal{A}$} if for any $A \in \mathcal{A}$ and any $\epsilon > 0$ there exists $n_0 = n_0(A, \epsilon)$ such that $\dist(F^n(x), D) < \epsilon$ for all $x \in A$ and all $n \ge n_0$. \textup{(}See \textup{\cite{S-T}} for more details\textup{)}.

\end{definition}

We say that $B \subset C_{+}$ \emph{attracts} the set $A \subset C_{+}$ if for each $\epsilon > 0$ there is $n_0 \in \NN$ such that $\dist(F^n(x), B) < \epsilon$ for all $n \ge n_0$ and all $x \in A$.

\begin{definition}
A compact set $B \subset C_{+}$ is called a \emph{uniform repeller} in $C_{+}$ if there is $\epsilon > 0$ such that $\liminf\limits_{n \to \infty} \dist(F^n(x), B) \ge \epsilon$ for any $x \in C_{+}$.  It is equivalent to the existence of a neighbourhood $U$ of $B$ in $C_{+}$ such that for each $x \in U \setminus B$ there is $n_0 \in \NN$ with the property that $F^n(x) \notin U$ for all $n \ge n_0$, see~\textup{\cite[\emph{Remark~5.15 on p.~136}]{S-T}}.
\end{definition}

\begin{definition}[Compact attractor of bounded sets]
Let $B \subset C_{+}$ be forward invariant.  By the \emph{compact attractor of bounded sets in $B$} we mean a nonempty compact invariant set $\Gamma \subset B$ that attracts any bounded $A \subset B$.  Such a set is unique \textup{(}see~\textup{\cite[\emph{Theorem 2.19 on p.~37}]{S-T}}\textup{)}.  By the \emph{compact attractor of neighbourhoods of compact sets in $B$} we mean a nonempty compact invariant set $\Gamma \subset B$ such that for any compact $A \subset B$ there is a \textup{(}relative\textup{)} neighbourhood $U$ of $A$ in $B$ such that $\Gamma$ attracts $U$.  Such a set is unique \textup{(}see~\textup{\cite[\emph{Theorem 2.19 on p.~37}]{S-T}}\textup{)}.

For $B = C_{+}$ we say simply \emph{compact attractor of bounded sets}, which is the same as the compact attractor of neighbourhoods of compact sets.
\end{definition}

\begin{proposition}
\label{prop:attractor-characterization}
    Let $B \subset C_{+}$ be forward invariant.  The compact attractor of bounded sets is characterized as the set of all $x \in B$ having bounded total orbits.
\end{proposition}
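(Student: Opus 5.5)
Write $\Gamma$ for the compact attractor of bounded sets in $B$, and $\mathcal{B}$ for the set of all $x \in B$ admitting a total orbit (contained in $B$) that is bounded. We prove the two inclusions $\Gamma \subset \mathcal{B}$ and $\mathcal{B} \subset \Gamma$ separately. Throughout we assume, as is implicit in the statement, that $\Gamma$ exists.

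\emph{Inclusion $\Gamma \subset \mathcal{B}$.}
\begin{enumerate}
\item Take $x \in \Gamma$. Since $F(\Gamma) = \Gamma$, the forward orbit $O^{+}(x)$ lies in $\Gamma$.
\item Invariance also gives surjectivity of $F$ on $\Gamma$. So we can pick $x_{-1} \in \Gamma$ with $F(x_{-1}) = x$, then $x_{-2} \in \Gamma$ with $F(x_{-2}) = x_{-1}$, and so on inductively.
\item This produces a total orbit of $x$ contained entirely in $\Gamma$.
\item $\Gamma$ is compact, hence bounded, so this total orbit is bounded and $x \in \mathcal{B}$.
\end{enumerate}

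\emph{Inclusion $\mathcal{B} \subset \Gamma$.}
\begin{enumerate}
\item Let $x \in B$ have a bounded total orbit $\{x_{-n}\}_{n \in \NN} \cup O^{+}(x)$, and set $A := \{\, x_{-n} : n \in \NN \,\}$.
\item $A$ is a bounded subset of $B$: each $x_{-n}$ lies in $B$, as implicit in the meaning of ``total orbit in $B$''. Hence $\Gamma$ attracts $A$.
\item Fix $\epsilon > 0$ and take $n_0 = n_0(A, \epsilon)$ from the definition of attraction. Then $\dist(F^{n}(y), \Gamma) < \epsilon$ for all $y \in A$ and all $n \ge n_0$.
\item Apply this with $y = x_{-n_0} \in A$ and $n = n_0$. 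Since $F^{n_0}(x_{-n_0}) = x$, we get $\dist(x, \Gamma) < \epsilon$.
\item As $\epsilon > 0$ is arbitrary, $\dist(x, \Gamma) = 0$. Since $\Gamma$ is closed, $x \in \Gamma$.
\end{enumerate}

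The argument uses only invariance, compactness and uniform attraction of bounded sets. The one point needing care is the convention that a backward orbit of a point of $B$ stays in $B$: attraction is only asserted for bounded subsets of $B$, so $A \subset B$ is required in the second inclusion.
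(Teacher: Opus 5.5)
Your proof is correct. The paper gives no argument of its own and only cites \cite[Thm.~2.20]{S-T}; your two inclusions are the standard argument behind that result. For one inclusion you use invariance, hence surjectivity of $F$ on the compact set $\Gamma$, to build a total orbit inside $\Gamma$. For the other you apply attraction of the bounded backward orbit $\{x_{-n}\}\subset B$ at the single point $F^{n_0}(x_{-n_0})=x$.

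Your caveat that the total orbit must lie in $B$ is genuinely necessary, not merely a convention. Take $d=1$, $F(x)=x^2$ and $B=[0,1/2]$: then $\Gamma=\{0\}$, yet $x=1/4$ has a bounded total orbit in $C_{+}$, with $x_{-n}=4^{-2^{-n}}$, that leaves $B$.
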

\begin{proof}
  See~\cite[Theorem~2.20 on p.~37]{S-T}.
\end{proof}

\section{Competitive systems of ODEs} \label{Sec2}

For a vector function $g$ of time $t$ and spatial variables $x = (x_1,\ldots,x_d)$ we will by $\mathrm{D}g(t,x)$ understand its derivative matrix with~respect to the latter.

We consider time-periodic systems of ordinary differential equations of the form
\begin{equation}
\label{eq:ODE}
    \dot{x}_i = G_i(t,x): =x_i g_i(t, x), \quad i \in \{1, \ldots, d \}, \ x = (x_1, \ldots, x_d) \in C_{+},
\end{equation}
where $g = (g_1, \ldots, g_d) \colon  \RR \times  C_{+} \to \RR^d$ is periodic in $t$, with period $T > 0$.  Systems of such a type are called periodic \emph{Kolmogorov systems of ODEs}.

We impose Carathéodory conditions on the functions $g$ (for example, see \cite[Sect.~I.5]{Hale-ODEs}) plus periodicity, period $T$, namely:
\medskip
\begin{enumerate}[label=\textup{H\arabic*},ref=\textup{H\arabic*},align=left]
    \item \label{Cara}
    The functions $g_i :\real\times C_{+} \to \real$, $1 \le i \le d$, satisfy
    \begin{enumerate}[label=\textup{H\arabic{enumi}-\alph{enumii}},ref=\textup{H\arabic{enumi}-\alph{enumii}},leftmargin=12pt]
    \item \label{Cara-continuous}
    \quad $g_i(t, \cdot)$ are continuous for almost every $t \in \RR$;
    \item\label{Cara-measurable}
    \quad $g_i(\cdot, x)$ are measurable for each $x \in C_{+}$;
    \item\label{Cara-periodic}
    \quad $g_i(\cdot, x)$ are $T$\nobreakdash-periodic, $T>0$ for each $x\in C_+$
    \item\label{Cara-bounded}
    \quad for each compact $K \subset C_{+}$ there exists a non-negative $\mu_K \in L^1([0,T])$ such that
    \begin{equation*}
        \abs{g_i(t, x)} \le \mu_K(t)
    \end{equation*}
    for $1 \le i \le d$, almost~all $t \in [0,T]$ and all $x \in K$;
    \item\label{Cara-Lipschitz}
    \quad for each compact $K \subset C_{+}$ there exists a non-negative $\nu_K \in L^1([0,T])$ such that
    \begin{equation*}
        \abs{g_i(t, x) - g_i(t, y)} \le \nu_K(t) \, \norm{x - y}
    \end{equation*}
    for $1 \le i \le d$, almost~all $t \in [0,T]$ and all $x, y \in K$.
    \end{enumerate}
\end{enumerate}
\medskip
Observe that the set of the exceptional values of $t$, that is, of those $t$ for which \ref{Cara-continuous} is not satisfied for some $i$ or \ref{Cara-bounded} or \ref{Cara-Lipschitz} is not satisfied for some $i$ and some compact $K$, has measure zero.  We put the value of $g_i(t, x)$ at those exceptional $t$ to be equal to $1$, for all $i$ and all $x$.  Further, by changing, if~necessary, the values at a set of $t$ with zero measure we can obtain that $g_i(\cdot, x)$ are $T$\nobreakdash-\hspace{0pt}periodic for each $x \in C_{+}$.

From now on we will assume the 5 hypotheses of \ref{Cara} to be satisfied.

Although $g$ is not necessarily continuous in time we have
\begin{lemma}
\label{lm:continuity-integrals}
    Let $h \colon [0, T] \times C_{+} \to C_{+}$ be continuous.  Then the mappings
    \begin{equation*}
        \Bigl[\, C_{+} \ni x \mapsto \int\limits_{0}^{T} g_i(t, h(t, x)) \, \dd t \in \RR \,\Bigr], \quad 1 \le i \le d,
    \end{equation*}
    are continuous.
\end{lemma}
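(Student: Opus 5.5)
The plan is to prove continuity at an arbitrary $x^0 \in C_{+}$ by sequences: take $x^{(m)} \to x^0$ and show $\int_0^T g_i(t, h(t, x^{(m)}))\,\dd t \to \int_0^T g_i(t, h(t, x^0))\,\dd t$ via the dominated convergence theorem. First, the integrands must be measurable. For fixed $x$, the map $t \mapsto h(t,x)$ is continuous, and $g_i$ is a Carathéodory function: measurable in $t$ by \ref{Cara-measurable} and continuous in the spatial variable for almost every $t$ by \ref{Cara-continuous}. Such a function is superpositionally measurable. Concretely, approximate $t \mapsto h(t,x)$ uniformly by step functions $s_k$ taking finitely many values on intervals. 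Each $t \mapsto g_i(t, s_k(t))$ is measurable, being a finite sum of measurable functions times indicators, and it converges almost everywhere to $g_i(t,h(t,x))$ by continuity in $x$ for a.e.\ $t$. Recall that at the exceptional null set of $t$ we have put $g_i \equiv 1$, so this causes no trouble.

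Next comes the domination. The set $\{x^{(m)} : m\in\NN\} \cup \{x^0\}$ is compact, so its image $K := h\bigl([0,T] \times (\{x^{(m)}\} \cup \{x^0\})\bigr)$ is a compact subset of $C_{+}$, because $h$ is continuous. By \ref{Cara-bounded}, $\abs{g_i(t, h(t,x^{(m)}))} \le \mu_K(t)$ for a.e.\ $t$ and all $m$, and $\mu_K \in L^1([0,T])$. For pointwise convergence, for each fixed $t$ we have $h(t,x^{(m)}) \to h(t,x^0)$, and for a.e.\ $t$ the function $g_i(t,\cdot)$ is continuous. Hence $g_i(t,h(t,x^{(m)})) \to g_i(t,h(t,x^0))$ for a.e.\ $t$, and dominated convergence gives the claim.

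Alternatively, one can use \ref{Cara-Lipschitz} with the same $K$. This yields the quantitative bound $\abs{\int_0^T (g_i(t,h(t,x^{(m)})) - g_i(t,h(t,x^0)))\,\dd t} \le \norm{\nu_K}_{L^1} \sup_{t\in[0,T]} \norm{h(t,x^{(m)}) - h(t,x^0)}$. The supremum tends to $0$ by uniform continuity of $h$ on the compact set $[0,T]\times(\{x^{(m)}\}\cup\{x^0\})$. I would present the dominated convergence argument and mention this as a remark. The only genuinely delicate point is the measurability of $t\mapsto g_i(t,h(t,x))$. It is standard (Scorza-Dragoni type), and I would handle it by the step-function approximation above.
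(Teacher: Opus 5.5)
Your proposal is correct and matches the paper's proof: the paper also takes a sequence $x^{(k)} \to x^{(0)}$, applies \ref{Cara-bounded} with the compact set $K = \{\, h(t, x^{(k)}) : t \in [0,T],\ k = 0,1,\ldots \,\}$, and concludes by dominated convergence using \ref{Cara-continuous} and \ref{Cara-measurable}. Your step-function argument for the measurability of $t \mapsto g_i(t, h(t,x))$ spells out a point the paper leaves implicit, and your alternative Lipschitz estimate via \ref{Cara-Lipschitz} is a valid extra route that the paper does not use.
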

\begin{proof}
    Let $(x^{(k)})_{k = 1}^{\infty} \subset C_{+}$ be a sequence convergent to $x^{(0)}$ as $k \to \infty$. Taking, in~\ref{Cara-bounded}, $K := \{\, h(t, x^{(k)}) : t \in [0, T], k = 0, 1, \ldots \,\}$ we conclude from~\ref{Cara-continuous} and~\ref{Cara-measurable} via the dominated convergence theorem that
    \begin{equation*}
        \int\limits_{0}^{T} g_i(t, h(t, x^{(k)})) \, \dd t \to \int\limits_{0}^{T} g_i(t, h(t, x^{(0)})) \, \dd t \quad \text{as } k \to \infty.
    \end{equation*}
\end{proof}

Observe that it follows from \ref{Cara-bounded} and~\ref{Cara-Lipschitz} that for the vector field $G$ in (\ref{eq:ODE})
\begin{itemize}
    \item
    for each compact $K \subset C_{+}$ there holds
    \begin{equation}
    \label{eq:bounded}
        \abs{G_i(t, x)} \le \sup\{\, \norm{\xi} : \xi \in K \,\} \, \mu_K(t)
    \end{equation}
    for $1 \le i \le d$, almost~all $t \in [0,T]$ and all $x \in K$;
    \item
    for each compact $K \subset C_{+}$ there holds
    \begin{multline}
    \label{eq:Lipschitz}
        \abs{G_i(t, x) - G_i(t, y)} \le \abs{x_i - y_i} \, \abs{g_i(t, x)} + \abs{y_i} \, \abs{g_i(t, x) - g_i(t, y)}
        \\
        \le \bigl(\mu_K(t) + \sup\{\, \norm{\xi} : \xi \in K \,\} \, \nu_K(t)\bigr) \, \norm{x - y}
    \end{multline}
    for $1 \le i \le d$, almost~all $t \in [0,T]$ and all $x, y \in K$.
\end{itemize}
Therefore, if $g$ satisfies \ref{Cara} then $G$ also satisfies the Carathéodory conditions.

Recall (e.g. \cite[Sect.~I.5]{Hale-ODEs}) that a solution of the initial value problem
\begin{equation}
\label{IVP}
    \begin{cases}
        \dot{x} = G(t, x)
        \\
        x(t_0) = x_0,
    \end{cases}
\end{equation}
where $t_0 \in \RR$ and $x_0 \in C_{+}$, is defined as an absolutely continuous function $[J \ni t \mapsto \Phi(t; t_0,x_0) = (\Phi_1(t; t_0,x_0), \ldots, \Phi_d(t; t_0,x_0))]$, where $J \subset \RR$ is an interval containing $t_0$ in its interior, such that
\begin{equation}
\label{eq:integral-equation}
    \Phi(t; t_0, x_0) = x_0 + \int_{t_0}^{t} G(\theta, \Phi(\theta; t_0, x_0)) \, \dd \theta, \quad t \in J.
\end{equation}
Standard results (for example, \cite[Thms.~I.5.1--I.5.3]{Hale-ODEs}) for nonautonomous ODEs on open subsets that satisfy the Carathéodory conditions plus a local Lipschitz condition give existence of unique and noncontinuable solutions on maximal time intervals, but we need to proceed with care as we are working with the closed set $\real\times C_+$.

Recall that the main point of the proof of continuability of solutions is showing that, provided $\taumax(t_0, x_0) < \infty$, if $\lim\limits_{t \nearrow \taumax(t_0, x_0)} \Phi(t; t_0, x_0) = y \in C_{+}$ then the solution can be continued to the right of $\taumax(t_0, x_0)$ (see, e.g., \cite[proof of Thm.~I.2.1 on p. 17]{Hale-ODEs}).  We shall apply that approach to our case.

We partition $C_{+}$ into sets of the form $(C_{I})_{++}$, where $I \subset \{1, \ldots, d\}$.  Each of the sets $(C_{I})_{++}$ can be identified with an open subset of the $\card{I}$\nobreakdash-\hspace{0pt}Euclidean space.  Fix $I$.  Application of \cite[Thms.~I.5.1, I.5.2 and~I.5.3]{Hale-ODEs} gives the existence, for each $t_0 \in \RR$ and each $x_0 \in (C_{I})_{++}$, of a noncontinuable solution $\Phi(\cdot; t_0, x_0) = (\Phi_1(\cdot; t_{0}, x_0), \dots, \Phi_d(\cdot; t_0, x_0))$ defined for $t \in (\taumin(t_0, x_0), \taumax(t_0, x_0))$ with $\taumin(t_0, x_0) < t_0 < \taumax(t_0, x_0)$.  However, so far `noncontinuability' refers to $(C_{I})_{++}$.  Now we can use the Kolmogorov property of~\eqref{eq:ODE} to overcome this difficulty.  Observe that we have, for all $i \in I$,
\begin{equation}
\label{IntEq1}
    \Phi_i(t; t_{0}, x_0) = (x_0)_i \, \exp\biggl(\int_{t_0}^{t} g_i(\theta, \Phi(\theta; t_0, x_0)) \, \dd \theta \biggr), \quad t \in [t_0, \taumax(t_0, x_0)).
\end{equation}
We estimate, by~\ref{Cara-bounded},
\begin{multline*}
    \int_{t_0}^{t} g_i(\theta, \Phi(\theta; t_0, x_0)) \, \dd \theta  \ge - \: \biggl\lvert \int_{t_0}^{t} g_i(\theta, \Phi(\theta; t_0, x_0)) \, \dd \theta \biggr\rvert \ge - \int_{t_0}^{t} \abs{g_i(\theta, \Phi(\theta; t_0, x_0))} \, \dd \theta
    \\
    \ge - \int_{t_0}^{t} \mu_K(\theta) \, \dd \theta \ge - \int_{t_0}^{\taumax(t_0, x_0)} \mu_K(\theta) \, \dd \theta
\end{multline*}
for all $i \in I$ and all $t \in [t_0, \taumax(t_0, x_0))$, where $K$ means the (necessarily compact) closure of $\{\, \Phi(t; t_0, x_0) : t \in [t_0, \taumax(t_0, x_0))\}$.  Consequently, $y = \lim\limits_{t \nearrow \taumax(t_0, x_0)} \Phi(t; t_0, x_0) \in (C_{I})_{++}$, and the solution can be extended, on $(C_{I})_{++}$, to the right of $\taumax(t_0, x_0)$.  In a similar way we treat $\taumin(t_0, x_0)$.

Further, it follows from~\ref{Cara-periodic} via uniqueness that the $T$\nobreakdash-\hspace{0pt}translate of $\Phi(\cdot; t_0, x_0)$ is the (noncontinuable) solution of~\eqref{eq:ODE} satisfying the initial condition $x(t_0 + T) = x_0$.

We collect what we have established so far in the following.
\begin{theorem}
\label{thm:exist}
    For each $t_0 \in \RR$ and each $x_0 \in C_{+}$ there exists a unique noncontinuable solution $\Phi(\cdot; t_0, x_0)$ of~\eqref{IVP} defined on $(\taumin(t_0, x_0), \taumax(t_0, x_0))$, having the property that if $x_0 \in (C_I)_{++}$ then $\Phi(\cdot; t_0, x_0) \in (C_I)_{++}$ for all $t \in (\taumin(t_0, x_0), \taumax(t_0, x_0))$.  Moreover,
    \begin{itemize}
        \item
        if $\taumin(t_0, x_0) > - \infty$ then $\norm{\Phi(t; t_0, x_0)} \to \infty$ as $t \searrow \taumin(t_0, x_0)$, and if $\taumax(t_0, x_0) < \infty$ then $\norm{\Phi(t; t_0, x_0)} \to \infty$ as $t \nearrow \taumin(t_0, x_0)$,
        \item
        for any $t_0 \in \RR$ and any $x_0 \in C_{+}$ one has $\taumin(t_0 + T, x_0) = \taumin(t_0, x_0) + T$, $\taumax(t_0 + T, x_0) = \taumax(t_0, x_0) + T$ and $\Phi(t + T; t_0 + T, x_0) = \Phi(t; t_0, x_0)$ for all $t \in (\taumin(t_0, x_0), \taumax(t_0, x_0))$.
    \end{itemize}
\end{theorem}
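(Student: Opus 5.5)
The plan is to assemble the theorem from the face-by-face construction already sketched, working separately on each relative interior $(C_I)_{++}$. Fix $I \subset \{1,\dots,d\}$ and identify $(C_I)_{++}$ with an open subset $U_I$ of $\RR^{\card I}$. The restricted field $G_I(t,x) = (x_i g_i(t,x))_{i\in I}$ satisfies the Carathéodory conditions on $\RR\times U_I$, with the local integrable Lipschitz bound \eqref{eq:Lipschitz}. Hence \cite[Thms.~I.5.1--I.5.3]{Hale-ODEs} give, for each $(t_0,x_0)$ with $x_0\in(C_I)_{++}$, a unique solution that cannot be continued within $U_I$. The coordinates $i\notin I$ are set to zero; they solve $\dot x_i = x_i g_i$ trivially.

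\emph{Uniqueness in $C_{+}$.} A solution of \eqref{IVP} might a priori leave $(C_I)_{++}$, so uniqueness there is not yet uniqueness in $C_+$. I will argue directly in $C_+$. Take two solutions $x,y$ on a common compact interval and let $K$ be the compact union of their ranges. By \eqref{eq:Lipschitz},
\[
\norm{x(t)-y(t)}\le \int_{t_0}^{t}\sqrt{d}\,\bigl(\mu_K+\sup_K\norm{\xi}\,\nu_K\bigr)\norm{x-y}\,\dd\theta ,
\]
and Gronwall with an $L^1$ kernel gives $x=y$. The same estimate shows that any solution starting in $(C_I)_{++}$ keeps $x_i\equiv0$ for $i\notin I$, since the zero function is a solution in those coordinates. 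For $i\in I$, formula \eqref{IntEq1} shows $x_i>0$ for every $t$ in the interval of existence. So every solution stays in its face and coincides with the one produced by Hale's theorem.

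\emph{Blow-up alternative.} This is the main obstacle. Suppose $\taumax<\infty$ and $\norm{\Phi}\not\to\infty$. I first upgrade this to boundedness of the whole trajectory near $\taumax$. If the trajectory were unbounded yet had a bounded subsequence, then on compact sets $\dot\Phi$ is dominated by an $L^1$ function, via \eqref{eq:bounded}. Combining this with the oscillation between a fixed ball and arbitrarily large norms over intervals shrinking to $\taumax$ should contradict absolute continuity of $\int\mu_K$. The delicate point is that the bound $\mu_K$ depends on the ball $K$, so I will argue on the exit times from a fixed ball $B$ to the sphere of twice its radius and use $\mu_{2B}$. Once the trajectory is bounded, let $K$ be the closure of its range. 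Then $\Phi$ is Cauchy as $t\nearrow\taumax$ because $\int\mu_K\to0$, so the limit $y$ exists. The displayed lower estimate on $\int g_i$ keeps $y_i>0$ for $i\in I$, so $y\in(C_I)_{++}$. Hale's theorem then continues the solution past $\taumax$, contradicting noncontinuability. The case of $\taumin$ is identical, using \eqref{IntEq1} backward in time. (The statement's ``$t\nearrow\taumin$'' in the second clause is evidently meant to be $\taumax$.)

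\emph{Periodicity.} By \ref{Cara-periodic}, $\psi(t):=\Phi(t-T;t_0,x_0)$ satisfies the integral equation \eqref{eq:integral-equation} with initial time $t_0+T$. A change of variables $\theta\mapsto\theta-T$ in \eqref{eq:integral-equation} verifies this. By uniqueness, $\psi$ coincides with $\Phi(\cdot;t_0+T,x_0)$ on its domain. Maximality applied in both directions (translating by $T$ and by $-T$) gives equality of the maximal intervals, shifted by $T$. This yields the translation identities for $\taumin$, $\taumax$ and $\Phi$ as stated.
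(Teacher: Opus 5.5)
Your proposal is correct and follows essentially the paper's route: apply Hale's Carathéodory theorems on each relative interior $(C_I)_{++}$, use the Kolmogorov formula \eqref{IntEq1} with the $\mu_K$ lower bound to keep the limit point at a finite $\taumax$ inside $(C_I)_{++}$ so that the solution can be continued, and obtain the periodicity identities from \ref{Cara-periodic} plus uniqueness. Your Gronwall argument for uniqueness in all of $C_{+}$ and your crossing argument ruling out oscillation of $\norm{\Phi}$ near $\taumax$ supply steps the paper's sketch leaves implicit, since the paper simply assumes that the closure of the trajectory is compact and that the limit exists.
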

We proceed now to the issue of the continuous dependence of solutions on initial values.  We will use \cite[Thm.~I.5.3]{Hale-ODEs}.  Again, the direct application of the theorem is impossible, since it is assumed there that the domain of the function is an open subset of $\RR \times \RR^d$.  However, the main property used in the proof of \cite[Thm.~I.3.1]{Hale-ODEs} (on which \cite[Thm.~I.5.3]{Hale-ODEs} is modelled) is the continuous dependence of solutions of some integral equations on parameters, which follows from our assumptions. Thus we have
\begin{theorem}
\label{thm:cont-depend}
    Assume \ref{Cara}.  Then the mapping
        \begin{equation*}
            \bigl[\, D(\Phi) \ni (t, t_0, x_0) \mapsto \Phi(t; t_0, x_0) \in \RR^d  \,\bigr]
        \end{equation*}
        is continuous on its domain of existence
        \begin{equation*}
            D(\Phi) = \bigcup_{\substack{t_0 \in \RR \\ x_0 \in C_{+}}} (\taumin(t_0, x_0), \taumax(t_0, x_0)) \times \{t_0\} \times \{x_0\}.
        \end{equation*}
\end{theorem}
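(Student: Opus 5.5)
The plan is to follow the classical scheme behind \cite[Thm.~I.3.1]{Hale-ODEs}: first establish continuity on compact subintervals via a Gronwall-type estimate, then use a compactness argument to show that the domain $D(\Phi)$ is open and the map is jointly continuous there. The key observation is that the closedness of $C_{+}$ causes no trouble. The vector field $G$ is defined on all of $\RR \times C_{+}$, and by Theorem~\ref{thm:exist} every solution stays in $C_{+}$. So all estimates can be carried out inside a compact subset $K$ of $C_{+}$, where \eqref{eq:bounded} and \eqref{eq:Lipschitz} hold with integrable functions $\mu_K$ and $L_K := \mu_K + \sup\{\norm{\xi} : \xi \in K\}\,\nu_K$. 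By periodicity \ref{Cara-periodic}, these functions extend to locally integrable functions on $\RR$.

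Fix $(t^{*}, t_0^{*}, x_0^{*}) \in D(\Phi)$ and choose a compact interval $[a,b] \subset (\taumin(t_0^*,x_0^*), \taumax(t_0^*,x_0^*))$ containing $t^*$ and $t_0^*$ in its interior. Let $K$ be the closed $\delta$-neighbourhood in $C_{+}$ of the compact arc $\{\Phi(t;t_0^*,x_0^*) : t \in [a,b]\}$; this set is compact. For $(t_0,x_0)$ near $(t_0^*,x_0^*)$ with $t_0 \in [a,b]$, consider the solution $\Phi(\cdot;t_0,x_0)$ as long as it stays in $K$. Subtracting the integral equations \eqref{eq:integral-equation} gives
\begin{multline*}
\norm{\Phi(t;t_0,x_0)-\Phi(t;t_0^*,x_0^*)} \le \norm{x_0-x_0^*} + \Bigl|\int_{t_0^*}^{t_0} \norm{G(\theta,\Phi(\theta;t_0^*,x_0^*))}\,\dd\theta\Bigr| \\
+ \Bigl|\int_{t_0}^{t} L_K(\theta)\,\norm{\Phi(\theta;t_0,x_0)-\Phi(\theta;t_0^*,x_0^*)}\,\dd\theta\Bigr|,
\end{multline*}
with the constant $\sqrt{d}$ absorbed into $L_K$. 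The middle term is small because $\mu_K$ is integrable, so its integral over a short interval tends to zero. Gronwall's inequality with an $L^1$ kernel then bounds the difference by $\epsilon \exp\bigl(\int_a^b L_K\bigr)$, where $\epsilon$ is the size of the first two terms.

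A standard continuation (bootstrap) argument now shows that for $\epsilon$ small enough the perturbed solution cannot leave the $\delta$-neighbourhood on $[a,b]$. Combined with the blow-up alternative in Theorem~\ref{thm:exist}, this yields $[a,b] \subset (\taumin(t_0,x_0), \taumax(t_0,x_0))$. Hence $D(\Phi)$ contains a neighbourhood of $(t^*,t_0^*,x_0^*)$, and $\Phi(\cdot;t_0,x_0) \to \Phi(\cdot;t_0^*,x_0^*)$ uniformly on $[a,b]$. Joint continuity in $t$ then follows from uniform convergence together with equicontinuity in $t$: the bound $\abs{\Phi_i(t)-\Phi_i(s)} \le \sup_K\norm{\xi} \bigl|\int_s^t \mu_K\bigr|$ holds and tends to zero as $t \to s$, by absolute continuity of the integral.

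The main obstacle I expect is the Gronwall step with a merely integrable, time-dependent Lipschitz function, combined with the fact that the initial time $t_0$ itself varies. Both issues are handled by the integral form of Gronwall's lemma, $u(t) \le \epsilon + \int L u \Rightarrow u(t) \le \epsilon e^{\int L}$, which is valid for nonnegative $L \in L^1$ and continuous $u$. The one real subtlety is making sure that $K$ is compact in $C_{+}$, since \eqref{eq:Lipschitz} is only available on compact subsets of $C_{+}$. This is automatic because every solution remains in $C_{+}$, so no extension of $G$ beyond the orthant is needed.
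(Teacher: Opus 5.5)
Your argument is correct, but it does not follow the paper's route. The paper writes out no proof. It only observes that the proof of \cite[Thm.~I.3.1]{Hale-ODEs}, on which the Carath\'eodory version \cite[Thm.~I.5.3]{Hale-ODEs} is modelled, rests on the continuous dependence of solutions of the integral equation \eqref{eq:integral-equation} on parameters, and asserts that this survives on the closed set $\RR \times C_{+}$. That textbook argument is qualitative. Equicontinuity of the perturbed solutions comes from the integrable bound \ref{Cara-bounded}, Arzel\`a--Ascoli extracts a convergent subsequence, dominated convergence passes to the limit in the integral equation, and uniqueness identifies the limit.

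You instead give a direct quantitative estimate, using the $L^1$-Gronwall inequality with the Lipschitz function supplied by \ref{Cara-Lipschitz}, followed by a bootstrap against the blow-up alternative of Theorem~\ref{thm:exist}.

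Your route relies essentially on the Lipschitz hypothesis, and in return it gives more:
\begin{enumerate}
\item local Lipschitz dependence on $x_0$;
\item openness of $D(\Phi)$, together with the content of Lemma~\ref{lm:slightly-insane}, in one argument;
\item an explicit reason why the closed orthant is harmless: \eqref{eq:Lipschitz} holds on compact subsets of $C_{+}$ that straddle faces, so the face-by-face decomposition needed for existence is not used again.
\end{enumerate}

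The compactness route needs only uniqueness plus the integrable bound, and it extends verbatim to dependence on parameters in the vector field. However, the paper leaves to the reader the check that it works on $\RR \times C_{+}$.
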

As a consequence, we formulate the following for continuous dependence on data:

\begin{lemma}
\label{lm:slightly-insane}
    Let $x^{(m)} \to x$ in $C_{+}$, $t_0^{(m)} \to t_0$ and $t^{(m)} \to t$ with $t \in (\taumin(t_0, x), \taumax(t_0, x))$.  Then $t^{(m)} \in (\taumin(t_0^{(m)}, x^{(m)}), \taumax(t_0^{(m)}, x^{(m)}))$ for sufficiently large $k$ and $\Phi(t^{(m)}; t_0^{(m)}, x^{(m)}) \to \Phi(t; t_0, x)$.
\end{lemma}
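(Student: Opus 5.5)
The plan is to obtain the lemma from Theorem~\ref{thm:cont-depend}, with the extra ingredient being that the domain $D(\Phi)$ is relatively open in $\RR \times \RR \times C_{+}$. This is the usual lower semicontinuity of $\taumax$ and upper semicontinuity of $\taumin$. Once openness is established, the point $(t, t_0, x)$ lies in $D(\Phi)$, so $(t^{(m)}, t_0^{(m)}, x^{(m)}) \in D(\Phi)$ for all sufficiently large $m$ (the ``$k$'' in the statement should read $m$). The convergence $\Phi(t^{(m)}; t_0^{(m)}, x^{(m)}) \to \Phi(t; t_0, x)$ is then exactly the continuity asserted in Theorem~\ref{thm:cont-depend}.

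To prove openness at $(t, t_0, x)$, first choose $a < \min\{t, t_0\}$ and $b > \max\{t, t_0\}$ with $[a,b] \subset (\taumin(t_0,x), \taumax(t_0,x))$. Set $R := 1 + \max\{\, \norm{\Phi(s; t_0, x)} : s \in [a,b] \,\}$ and let $K := \{\, \xi \in C_{+} : \norm{\xi} \le R \,\}$. The estimates \eqref{eq:bounded} and \eqref{eq:Lipschitz} give an integrable Lipschitz bound $L_K(s) := \mu_K(s) + R\,\nu_K(s)$ for $G$ on $K$, valid for all $s$ by periodicity.

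For nearby data $(t_0', x')$, consider the solution $\Phi(\cdot; t_0', x')$ as long as it stays in $K$ and within $[a,b]$. Comparing the integral equations \eqref{eq:integral-equation} for the two solutions and applying Gronwall's inequality with kernel $L_K$ yields
\[
\norm{\Phi(s; t_0', x') - \Phi(s; t_0, x)} \le \Bigl(\norm{x' - x} + \Bigl|\int_{t_0}^{t_0'} \!\! L'(\theta)\,\dd\theta\Bigr|\Bigr) \exp\Bigl(\int_a^b L_K(\theta)\,\dd\theta\Bigr).
\]
Here $L'$ is a majorant of $\norm{G}$ on $K$ coming from \eqref{eq:bounded}. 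The right-hand side tends to $0$ as $(t_0', x') \to (t_0, x)$, by absolute continuity of the Lebesgue integral. So for data close enough the bound is below $1/2$, and a continuation argument shows the perturbed solution cannot leave $K$ on $[a,b]$. By Theorem~\ref{thm:exist}, a solution that remains bounded cannot reach $\taumin$ or $\taumax$, so $[a,b] \subset (\taumin(t_0',x'), \taumax(t_0',x'))$. Since $t^{(m)} \in (a,b)$ for large $m$, this gives the membership claim.

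The main obstacle is the continuation step at the boundary of $C_{+}$. The perturbed initial point $x^{(m)}$ may lie in a different face $(C_J)_{++}$ than $x$, whereas Theorem~\ref{thm:exist} was derived face by face. I would handle this by noting that the Gronwall comparison uses only the integral equation on the closed set $C_{+}$ and the Lipschitz estimate \eqref{eq:Lipschitz} on $K \subset C_{+}$, not any openness of a face. Face-invariance then guarantees that each solution stays in $C_{+}$, where those estimates apply, and the blow-up alternative of Theorem~\ref{thm:exist} holds in every face.
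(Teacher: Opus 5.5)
Your proof is correct, but it takes a more explicit route than the paper. The paper gives no separate argument. It states the lemma as an immediate consequence of Theorem~\ref{thm:cont-depend}, which is itself justified by transplanting the proof of Hale's continuous-dependence theorem to the closed set $\RR \times C_{+}$. In that formulation both halves of the lemma are already part of the conclusion: the perturbed solutions exist on every compact subinterval of the limiting maximal interval for large indices, and converge uniformly there. Hale obtains this from uniqueness plus a compactness argument, not from a Lipschitz estimate. The paper then uses the lemma to deduce that $D(\overline{\Phi})$ is open.

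You instead prove the openness directly, i.e.\ lower semicontinuity of $\taumax$ and upper semicontinuity of $\taumin$. You do this with a Gronwall estimate using the integrable kernel from \eqref{eq:Lipschitz}, plus a continuation argument. That argument needs only the weak blow-up alternative of Theorem~\ref{thm:exist}: a solution that stays bounded near a finite endpoint can be continued. This is exactly what the face-by-face argument via \eqref{IntEq1} establishes.

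What your route buys is a self-contained, quantitative proof. It makes clear why a change of face of $x^{(m)}$ is harmless: the comparison uses only the integral equation and Lipschitz bounds on the compact $K \subset C_{+}$. There is no circularity, since you never use (P0) or (EF0). The paper's route is shorter, but it asks the reader to accept that Hale's proof carries over to the non-open domain.

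Two small remarks. First, the componentwise bound \eqref{eq:Lipschitz} gives a Euclidean Lipschitz constant $\sqrt{d}\,(\mu_K + R\,\nu_K)$ rather than $\mu_K + R\,\nu_K$; this is harmless. Second, your Gronwall bound, combined with $\norm{\Phi(s'; t_0', x') - \Phi(s; t_0', x')} \le \bigl|\int_s^{s'} L'(\theta)\,\dd\theta\bigr|$, already yields $\Phi(t^{(m)}; t_0^{(m)}, x^{(m)}) \to \Phi(t; t_0, x)$. So you could dispense with Theorem~\ref{thm:cont-depend} entirely.
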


\medskip

Thus far we have established properties of  $\Phi$ that describe the existence and uniqueness of solutions of the time-periodic ODE (\ref{eq:ODE}). In order to be able to apply the theory of dynamical systems in a standard way we make Eq.~\eqref{eq:ODE} autonomous by attaching an additional dimension corresponding to time translation
\begin{equation}
\label{eq:ODE-time_axis}
    \begin{cases}
        \displaystyle \frac{\mathrm{d}s}{\mathrm{d}t} = 1
        \\[2ex]
        \displaystyle \frac{\mathrm{d}x_i}{\mathrm{d}t} = x_i g_i(s, x).
    \end{cases}
\end{equation}
System~\eqref{eq:ODE-time_axis} generates a local flow (continuous-time dynamical system) $\overline{\Phi}$ on a subset $D(\overline{\Phi})$ of  $\RR \times ( \RR \times C_{+})$, $\overline{\Phi} \colon D(\overline{\Phi}) \to \RR \times C_{+}$, via the formula
\begin{equation}
    \overline{\Phi}(t, (s, x)) := (s + t, \Phi(s + t; s, x)). \label{PhiBar}
\end{equation}
As such, $\overline{\Phi}$ satisfies the following properties of dynamical systems: 
\begin{itemize}
    \item[(EF0)]
    $\overline{\Phi}$ is continuous, and $D(\overline{\Phi})$ is an open subset of $\RR \times (\RR \times C_{+})$ containing $\{\, (0, (s, x)) : s \in \RR, x \in C_{+}\,\}$;
    \item[(EF1)]
    $\overline{\Phi}(0, (s, x)) = (s, x)$ for all $s \in \RR$ and all $x \in C_{+}$;
    \item[(EF2)]
    \begin{equation*}
        \overline{\Phi}(t_2, \overline{\Phi}(t_1, (s,x))) = \overline{\Phi}(t_1 + t_2, (s, x)),
    \end{equation*}
    which is to be interpreted so that if for some $t_1, t_2, s \in \RR$ and $x \in C_{+}$ one of the sides as well as $\overline{\Phi}(t_1, (s, x))$ exist then the other side exists too, and the equality holds.

    (The openness of the domain of $\overline{\Phi}$ is a consequence of Lemma~\ref{lm:slightly-insane}, cf.\ \cite[Rem.~2.4]{Hajek-paper} for one\nobreakdash-\hspace{0pt}sided time. For (EF2), see~\cite[Sect.~II.1]{Hajek-book}.)
\end{itemize}

Consequently, $\Phi$ satisfies the following properties:
\begin{itemize}
    \item[(P0)]
    $\Phi$ is continuous, and its domain $D(\Phi)$ is an open subset of $\RR \times \RR \times C_{+}$ containing $\{\, (s, s, x) : s \in \RR, x \in C_{+}\,\}$;
    \item[(P1)]
    $\Phi(s; s, x) = x$ for any $s \in \RR$ and any $x \in C_{+}$;
    \item[(P2)]
    \begin{equation}
    \label{eq:process}
        \Phi(s + t_1 + t_2; s + t_1, \Phi(s + t_1; s, x)) = \Phi(s + t_1 + t_2; s, x),
    \end{equation}
    which is to be interpreted so that if for some $s, t_1, t_2 \in \RR$ and $x \in C_{+}$ one of the sides as well as $\Phi(s + t_1; s, x)$ exist then the other side exists, too, and the equality holds.
\end{itemize}
$\Phi$ will be referred to as the \emph{local process} generated by~\eqref{eq:ODE}, cf.\ \cite[Chpt.~2]{Kl-Ras}.

\medskip
Since \eqref{eq:ODE} is time-periodic, the local process $\Phi$ has the following additional property:
\begin{itemize}
    \item
    \begin{equation}
    \label{eq:process-periodic}
        \Phi(t + s; s, x) = \Phi(t + s + T; s + T, x),
    \end{equation}
    which is to be interpreted so that if for some $t, s \in \RR$ and some $x \in C_{+}$ one side exists then the other side exists, too, and the equality holds.
\end{itemize}
Observe that the solution of~\eqref{eq:ODE} passing through $x$ is $T$\nobreakdash-\hspace{0pt}periodic if and only if $\Phi(T; 0, x) = x$.

\medskip
By (P2), we have the formula
\begin{equation}
\label{eq:process-inverse}
    \Phi(s; t, \Phi(t; s, x)) = x,
\end{equation}
provided that $\Phi(t; s, x)$ exists.  In~particular, it follows that for any $s, t \in \RR$ the map $\Phi(t; s, \cdot)$ is injective on its domain.

\bigskip
We will need to write the left-hand side of~\eqref{eq:process-inverse} in terms of a solution of system~\eqref{eq:ODE} with $g$ replaced with $- g$.  For $x \in C_{+}$ and $t \in \RR$ fixed such that $\Phi(t; 0, x)$ exists we put $\phi(s) := \Phi(s; 0, x)$, $s \in (\taumin(0, x), \taumax(0, x))$.  Let $\chi(\theta) := \phi(t - \theta)$.  We have, where $\dot{} = \frac{\mathrm{d}}{\mathrm{d}{\theta}}$,
\begin{multline*}
    \frac{\mathrm{d}\chi}{\mathrm{d}\theta}(\theta) = - \dot{\phi}(t - \theta) = - G(t - \theta, \phi(t -\theta)) = - G(t - \theta, \chi(\theta)),
    \\
    \theta \in (t - \taumax(0, x), t - \taumin(0, x)).
\end{multline*}
For further reference, we formulate the result of the above calculation as the following.
\begin{lemma}
\label{lm:inverse}
    Let $x \in C_{+}$ and $t \in \RR$ be such that $\Phi(t; 0, x)$ exists.  Then $x$ is equal to the value at time $\theta = t$ of the solution of the initial value problem
\begin{equation}
\label{eq:ODE-retro}
     \begin{cases}
         \displaystyle \frac{\mathrm{d}\xi_i}{\mathrm{d}\theta} = - \xi_i g_i(t - \theta, \xi), \quad i \in \{1, \ldots, d \}, \ \xi \in C_{+},
         \\[1.5ex]
         \xi(0) = \Phi(t; 0, x).
     \end{cases}
\end{equation}
\end{lemma}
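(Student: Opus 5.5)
The plan is to verify directly that $\chi(\theta) := \phi(t - \theta)$, with $\phi(s) := \Phi(s; 0, x)$, is a solution in the Carathéodory sense of \eqref{eq:ODE-retro}. Then I will invoke uniqueness to identify it with \emph{the} solution, and finally evaluate at $\theta = t$.

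First I check the initial condition and the domain. Since $\Phi(t; 0, x)$ exists, we have $t \in (\taumin(0,x), \taumax(0,x))$. Hence $\chi$ is defined on the open interval $J := (t - \taumax(0,x), t - \taumin(0,x))$. This interval contains $\theta = 0$, because $t$ lies in the domain of $\phi$. It also contains $\theta = t$, because $0$ lies in that domain. Moreover $\chi(0) = \phi(t) = \Phi(t;0,x)$ and $\chi(t) = \phi(0) = x$ by (P1).

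Next I verify the integral equation. The defining identity \eqref{eq:integral-equation} for $\phi$ gives, for $\theta \in J$,
\begin{equation*}
\chi(\theta) = \phi(t-\theta) = \phi(t) + \int_{t}^{t-\theta} G(\sigma, \phi(\sigma))\,\dd\sigma .
\end{equation*}
I substitute $\sigma = t - \eta$, which is an affine change of variables and is therefore legitimate for Lebesgue integrals. This yields
\begin{equation*}
\chi(\theta) = \chi(0) - \int_0^{\theta} G(t-\eta, \chi(\eta))\,\dd\eta .
\end{equation*}
Componentwise, the integrand is $-\chi_i(\eta)\, g_i(t-\eta,\chi(\eta))$. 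So $\chi$ is absolutely continuous and satisfies the integral form of \eqref{eq:ODE-retro}. This makes rigorous the formal derivative computation preceding the lemma, which only holds almost everywhere in the Carathéodory setting.

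Finally, I must address uniqueness, which is the only real point. The right-hand side of \eqref{eq:ODE-retro} is $(\theta,\xi) \mapsto -\xi_i g_i(t-\theta,\xi)$. The function $\tilde g(\theta,\xi) := -g(t-\theta,\xi)$ again satisfies \ref{Cara-continuous}, \ref{Cara-measurable} and \ref{Cara-periodic}, since reflection and translation preserve measurability and periodicity. The bounds \ref{Cara-bounded} and \ref{Cara-Lipschitz} are preserved with $\mu_K(t-\cdot)$ and $\nu_K(t-\cdot)$, and these are again locally integrable and periodic.

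Therefore Theorem~\ref{thm:exist}, applied to the system with $g$ replaced by $\tilde g$, gives a unique noncontinuable solution of \eqref{eq:ODE-retro}. Uniqueness then forces it to coincide with $\chi$ on $J$, because $\chi$ is a solution on an interval containing both $0$ and $t$. Evaluating at $\theta = t$ gives the value $x$.

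The main obstacle I foresee is this uniqueness step. One must confirm that Theorem~\ref{thm:exist} really applies to the transformed right-hand side, including the boundary faces $(C_I)_{++}$. That is why I check explicitly that \ref{Cara} is invariant under $g \mapsto -g(t-\cdot,\cdot)$.
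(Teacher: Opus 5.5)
Your proposal is correct and follows essentially the same route as the paper. The paper defines the same reflected function $\chi(\theta) := \phi(t-\theta)$ and computes $\frac{\mathrm{d}\chi}{\mathrm{d}\theta}(\theta) = -G(t-\theta,\chi(\theta))$ on $(t-\taumax(0,x), t-\taumin(0,x))$; your version simply makes explicit what the paper leaves implicit, namely the integral form of this identity (the appropriate notion in the Carath\'eodory setting) and the check that $-g(t-\cdot,\cdot)$ again satisfies \ref{Cara}, so that Theorem~\ref{thm:exist} supplies the uniqueness needed to speak of \emph{the} solution.
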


\section{Properties of the process $\Phi$}
Recall that \ref{Cara} is our standing assumption.

In this section we put further assumptions on \eqref{eq:ODE}.

\begin{enumerate}[label=\textup{H\arabic{enumi}},ref=\textup{H\arabic{enumi}}]
\setcounter{enumi}{1}
\item \label{0-repel}
\begin{equation*}
   \int\limits_{0}^{T} g_i(t, 0) \, \dd t > 0
\end{equation*}
for all $1 \le i \le d$.  In an ecological context, this says that the average per~capita growth rate of each species remains positive when the population is arbitrarily small.

    \item \label{new-H3}
    For each $i \in \{1, \dots, d\}$, the solution $\Phi(\cdot; 0, \e_i)$ is $T$\nobreakdash-\hspace{0pt}periodic.
\end{enumerate}
From~\ref{new-H3} it follows, via~\eqref{IntEq1}, that, for each $i \in \{1, \dots, d\}$ there holds
\begin{equation}
\label{eq:integral-is-zero}
    \int_{t}^{t + T} g_i(\theta; \Phi(\theta; t, \e_i)) \, \dd \theta = 0
\end{equation}
for any $t \in \RR$.

An easy to check sufficient condition for satisfying \ref{new-H3} is given by the following.

\begin{enumerate}[label=\textup{$\widetilde{\textup{H}}$\arabic{enumi}},ref=\textup{$\widetilde{\textup{H}}$\arabic{enumi}}]
\setcounter{enumi}{2}
\item \label{tilde-H3}
There exists $M > 0$ such that $g_i(t, M \e_i) \le 0$ for all $1 \le i \le d$ and a.a.\ $t \in [0, T]$.
\end{enumerate}

\begin{enumerate}[resume*,label=\textup{H\arabic{enumi}},ref=\textup{H\arabic{enumi}}]
\setcounter{enumi}{3}
\item \label{competitive}
For almost all $t \in [0, T]$ the functions $g_i(t, \cdot)$ are non-increasing in $x_j$, for any $i, j \in \{1, \ldots, d\}$.  Moreover, for each $i \in \{1, \ldots, d\}$ there is a set $\Theta_{i} \subset [0, T]$ of positive measure such that for each $t \in \Theta_{i}$ the functions $g_i(t, \cdot)$ are (strictly) decreasing in $x_i$.   Ecologically this hypothesis says that all species are competing with each other including themselves.
\end{enumerate}
Sometimes we make an assumption stronger than \ref{competitive}.
\begin{enumerate}[label=\textup{H\arabic{enumi}$'$},ref=\textup{H\arabic{enumi}$'$}]
\setcounter{enumi}{3}
    \item \label{strongly-competitive}
    For almost all $t \in [0, T]$ the functions $g_i(t, \cdot)$ are non-increasing in $x_j$, for any $i, j \in \{1, \ldots, d\}$.  Moreover, for any $i, j \in \{1, \ldots, d\}$ there is a set $\Theta_{i,j} \subset [0, T]$ of positive measure such that for each $t \in \Theta_{i,j}$ the functions $g_i(t, \cdot)$ are (strictly) decreasing in $x_j$, for any $i, j \in \{1, \ldots, d\}$.
\end{enumerate}

\subsection{The behaviour of (\ref{eq:ODE}) on the axes}
\label{subsect:Poincare-axes}

For $1 \le i \le d$ define $\tilde{g}_i \colon \RR \times [0, \infty) \to \RR$ by putting $\tilde{g}_i(t, x) := g_i(t, x \ev_i)$.

The restriction of~\eqref{eq:ODE} to the $i$\nobreakdash-\hspace{0pt}th axis $(C_i)_{+}$ corresponds to the scalar ODE
\begin{equation}
\label{eq:ODE-i}
    \dot{x} = x \tilde{g}_i(t,x).
\end{equation}
For $1 \le i \le d$, $t_0 \in \RR$ and $x_0 \in [0, \infty)$ we denote by $\phi_i(\cdot; t_0, x)$ the unique noncontinuable solution of the initial value problem
\begin{equation}
\label{IVP-i}
    \begin{cases}
        \dot{x} = x \tilde{g}_i(t,x)
        \\
        x(t_0) = x_0.
    \end{cases}
\end{equation}
We have
\begin{equation*}
    \phi_i(t; t_0, x) = \Phi_i(t; t_0, x \e_i).
\end{equation*}

Observe that each solution of~\eqref{eq:ODE-i} is either identically zero or takes positive values only.

\begin{lemma} \label{lemma:axes-complete}
    Assume~\ref{Cara}, \ref{new-H3} or~\ref{tilde-H3}, and~\ref{competitive}.  For any $1 \le i \le d$, any $x_0 \in [0, \infty)$ and any $t_0 \in \RR$ the solution $\phi_i(t;t_0,x_0)$ is defined for all $t \ge t_0$.
\end{lemma}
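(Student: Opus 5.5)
Fix $i$.  The case $x_0 = 0$ is trivial, since $\phi_i \equiv 0$ is then a global solution, so I take $x_0 > 0$; by Theorem~\ref{thm:exist}, $\phi_i(t; t_0, x_0) > 0$ on its whole interval of existence.  Theorem~\ref{thm:exist} also says that if $\taumax(t_0, x_0 \e_i) < \infty$ then $\phi_i(t; t_0, x_0) \to \infty$ as $t$ approaches $\taumax$ from the left.  Hence it suffices to prove that $\phi_i(\cdot; t_0, x_0)$ stays bounded on $[t_0, \taumax)$.  My plan is a comparison argument for the scalar equation~\eqref{eq:ODE-i}, with a suitable solution that exists for all time acting as a barrier.

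The barrier is the periodic solution $p(t) := \phi_i(t; 0, 1) = \Phi_i(t; 0, \e_i)$.  Under~\ref{new-H3} it is $T$\nobreakdash-periodic and positive, so it is defined on all of $\RR$, and by~\eqref{IntEq1} it never vanishes.  I then distinguish two cases.

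If $x_0 \le p(t_0)$, uniqueness (Theorem~\ref{thm:exist}) for the scalar equation implies that the two solutions cannot cross: they either coincide or stay strictly ordered.  Thus $0 < \phi_i(t; t_0, x_0) \le p(t) \le \max p$ for all $t$ in the existence interval, which gives the bound.

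If $x_0 > p(t_0)$, ordering again gives $\phi_i(t) > p(t)$ as long as $\phi_i$ exists.  By~\ref{competitive}, $\tilde g_i(t, \cdot)$ is non-increasing for a.a.~$t$, so $\tilde g_i(t, \phi_i(t)) \le \tilde g_i(t, p(t))$ for a.a.~$t$.  Formula~\eqref{IntEq1} then yields
\begin{equation*}
    \phi_i(t) = x_0 \exp\Bigl(\int_{t_0}^{t} \tilde g_i(\theta, \phi_i(\theta)) \, \dd\theta\Bigr) \le x_0 \exp\Bigl(\int_{t_0}^{t} \tilde g_i(\theta, p(\theta)) \, \dd\theta\Bigr) = x_0 \, \frac{p(t)}{p(t_0)} .
\end{equation*}
This is bounded by $x_0 \max p / \min p$, which contradicts blow-up, so $\taumax = \infty$.

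Under~\ref{tilde-H3} instead of~\ref{new-H3}, I would use the constant $M$ as a barrier in place of $p$.  If $\phi_i$ exceeds $\max\{x_0, M\}$, take the last time $s$ at which it equals $\max\{x_0, M\}$.  After $s$ we have $\phi_i \ge M$, so monotonicity gives $\tilde g_i(\theta, \phi_i(\theta)) \le \tilde g_i(\theta, M) \le 0$ for a.a.~$\theta$.  By~\eqref{IntEq1} started at $s$, $\phi_i$ is non-increasing from $s$ onward, which is a contradiction.  Hence $\phi_i \le \max\{x_0, M\}$.

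The main obstacle is the case $x_0 > p(t_0)$ under~\ref{new-H3}, namely making the comparison rigorous with only Carathéodory regularity.  The ordering of the solutions comes from uniqueness and continuity of scalar solutions, and the integral comparison needs only monotonicity almost everywhere.  Both points need care but are routine.
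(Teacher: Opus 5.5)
Your proof is correct, but it takes a different route from the paper's.

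The paper argues by contradiction. If $\taumax<\infty$, the blow-up alternative gives $t_1$ with $\phi_i\ge c$ on $[t_1,\taumax)$ for a fixed level $c$ ($2$ under \ref{new-H3}, $M+1$ under \ref{tilde-H3}). Hence $\int_{t_1}^{\taumax}\tilde g_i(t,\phi_i(t))\,\dd t=+\infty$. Monotonicity bounds this integral by $\int_{t_1}^{\taumax}\tilde g_i(t,c)\,\dd t$, which the paper splits into whole periods plus a remainder controlled by $\int_0^T\mu_K$.

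You instead prove an a priori bound by comparison. Uniqueness orders the scalar solutions, and the exponential formula with monotonicity gives $\phi_i\le p$ or $\phi_i(t)\le x_0\,p(t)/p(t_0)$, using the periodic solution $p$ as a barrier. Under \ref{tilde-H3} the constant $M$ plays the barrier role and gives $\phi_i\le\max\{x_0,M\}$.

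Your route yields a bound on the forward semi-orbit that is uniform in $t$. That is more than completeness, and it uses \ref{new-H3} or \ref{tilde-H3} in an essential way. It also avoids the paper's claim that each whole-period integral $\int_0^T\tilde g_i(t,c)\,\dd t$ is negative, which under \ref{new-H3} is only evident when $c>\max p$.

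The paper's route is shorter and more general. Since $\taumax<\infty$ involves only finitely many periods, its contradiction needs only monotonicity and local integrability of $\tilde g_i(\cdot,c)$. So non-blow-up on the axes in fact holds without \ref{new-H3} or \ref{tilde-H3} at all.
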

\begin{proof}
    The case $x = 0$ is straightforward, so assume $x > 0$.  Suppose to the contrary that for some $1 \le i \le d$, $t_0 \in \RR$ and $x_0 > 0$ the right end (call it $\taumax$) of the domain of the noncontinuable solution $\phi_i(\cdot; t_0, x_0)$ belongs to $(t_0, \infty)$.  A standard result in the theory of Carathéodory ODEs, see, e.g., \cite[Thm.~I.5.2]{Hale-ODEs}, implies that there exists $t_1 \in [t_0, \taumax)$ such that $\phi_i(t; t_0, x_0)$ is $\ge 2$ (under \ref{new-H3}) or is $\ge M + 1$ (under \ref{tilde-H3}), for all $t \in [t_1, \taumax)$.  Put $x_1 := \phi_i(t; t_0, x_0)$.

    By~\eqref{eq:ODE-i}, we have
    \begin{equation*}
        \int_{t_1}^{\taumax} \tilde{g}_i(t, \phi_i(t; t_0, x_0)) \, \dd t = \lim\limits_{t \nearrow \taumax} \ln{\phi_i(t; t_0, x_0)} - \ln{x_1},
    \end{equation*}
    which  is $\infty$.  Let $\ell \in \NN \cup \{0\}$ be such that $t_1 + \ell T \le \taumax$ but $t_1 + (\ell + 1) T > \taumax$.   We represent
    \begin{equation*}
        \int_{t_1}^{\taumax} \tilde{g}_i(t, \phi_i(t; t_0, x_0)) \, \dd t = \sum_{k = 0}^{\ell - 1} \int_{t_1 + k T}^{t_1 + (k + 1) T} \tilde{g}_i(t, \phi_i(t; t_0, x_0)) \, \dd t + \int_{t_1  + \ell T}^{\taumax} \tilde{g}_i(t, \phi_i(t; t_0, x_0)) \, \dd t,
    \end{equation*}
    where the first summand on the RHS is put to be $0$ if $\ell = 0$.  We estimate
    \begin{equation*}
        \int_{t_1 + k T}^{t_1 + (k + 1) T} \tilde{g}_i(t, \phi_i(t; t_0, x_0)) \, \dd t \mathrel{\overset{\ref{competitive}}{\le}} \int_{t_1 + k T}^{t_1 + (k + 1) T} \tilde{g}_i(t, x_1) \, \dd t \mathrel{\overset{\ref{Cara-periodic}}{=}} \int_{0}^{T} \tilde{g}_i(t, x_1) \, \dd t \mathrel{\overset{\ref{competitive}}{<}} 0,
    \end{equation*}
    and
    \begin{multline*}
        \int_{t_1  + \ell T}^{\taumax} \tilde{g}_i(t, \phi_i(t; t_0, x_0)) \, \dd t \mathrel{\overset{\ref{competitive}}{\le}} \int_{t_1  + \ell T}^{\taumax} \tilde{g}_i(t, x_1) \, \dd t \mathrel{\overset{\ref{Cara-periodic}}{=}} \int_{0}^{\taumax - (t_1 + \ell T)} \tilde{g}_i(t, x_1) \, \dd t
        \\
        \le \int_{0}^{\taumax - (t_1 + \ell T)} \abs{\tilde{g}_i(t, x_1)} \, \dd t \le \int_{0}^{T} \abs{\tilde{g}_i(t, x_1)} \, \dd t \mathrel{\overset{\ref{Cara-bounded}}{\le}} \int_{0}^{T} \mu_K(t) \, \dd t < \infty,
    \end{multline*}
    where $K$ equals $\{2\e_i\}$ in case of \ref{new-H3} and $\{(M + 1) \e_i\}$ in case of \ref{tilde-H3}.  We have obtained a contradiction.
\end{proof}
Lemma~\ref{lemma:axes-complete} allows us to define, for each $1 \le i \le d$, a mapping $\hat{H}_i \colon [0, \infty) \to [0, \infty)$ by the formula
\begin{equation*}
    \hat{H}_i(x) := \phi_i(T; 0, x), \quad x \in [0, \infty).
\end{equation*}
By~(P0), $\hat{H}_i$ is continuous, and, by Lemma~\ref{lm:inverse}, $\hat{H}_i$ is injective and its inverse is continuous, too.  As a consequence, $\hat{H}_i$ is strongly monotone, and, since $\hat{H}_i(0) = 0$, $\hat{H}_i$ is increasing.

We write $\hat{H}_i^n$ for the $n$\nobreakdash-\hspace{0pt}th iterate of $\hat{H}_i$

\begin{lemma}
\label{lm:origin-repel}
    Assume~\ref{Cara} and~\ref{0-repel}.  Then there is $\delta > 0$ such that $\hat{H}_i(x) > x$ for all $1 \le i \le d$ and any $x \in (0, \delta)$.
\end{lemma}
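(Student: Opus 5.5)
We argue through the logarithmic representation~\eqref{IntEq1} on the $i$\nobreakdash-\hspace{0pt}th axis. For $x > 0$ the solution $\phi_i(\cdot; 0, x)$ stays positive. It is defined on $[0, T]$ by Lemma~\ref{lemma:axes-complete}; alternatively, for $x$ small we can argue directly by continuous dependence from the zero solution. Then
\begin{equation*}
    \hat{H}_i(x) = x \exp\biggl( \int_0^T \tilde{g}_i(t, \phi_i(t; 0, x)) \, \dd t \biggr).
\end{equation*}
Hence $\hat{H}_i(x) > x$ if and only if $I_i(x) := \int_0^T \tilde{g}_i(t, \phi_i(t; 0, x)) \, \dd t > 0$. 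Since $\phi_i(t; 0, 0) \equiv 0$, we have $I_i(0) = \int_0^T g_i(t, 0) \, \dd t > 0$ by~\ref{0-repel}. It therefore suffices to show that $I_i$ is continuous at $0$, and then take $\delta$ to be the minimum over the finitely many $i$.

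Continuity of $I_i$ is Lemma~\ref{lm:continuity-integrals}, applied with $h(t, y) := \Phi(t; 0, y)$ restricted to the axis, which is continuous by Theorem~\ref{thm:cont-depend}. The only subtlety is the domain of definition. Lemma~\ref{lm:continuity-integrals} is stated for $h$ defined on all of $[0,T] \times C_{+}$, whereas we need $h$ only for $y$ in a neighbourhood of $0$ on the $i$\nobreakdash-\hspace{0pt}th axis. Without assuming \ref{new-H3}/\ref{competitive}, global existence is not available, so I would avoid it. Lemma~\ref{lm:slightly-insane}, together with compactness of $[0,T]$, shows that there is $\delta_0 > 0$ such that $\phi_i(t; 0, x)$ exists for all $t \in [0, T]$ and $x \in [0, \delta_0]$. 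The map $(t, x) \mapsto \phi_i(t; 0, x)$ is then continuous on $[0,T] \times [0, \delta_0]$, with values in a compact set $K$.

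Rather than quoting Lemma~\ref{lm:continuity-integrals} verbatim, I would rerun its dominated-convergence argument on this domain. Take $x^{(k)} \to 0$. By~\ref{Cara-continuous}, the integrands $\tilde{g}_i(t, \phi_i(t; 0, x^{(k)}))$ converge to $g_i(t, 0)$ for a.e.\ $t$. They are measurable, being Carathéodory functions composed with continuous ones, and they are dominated by $\mu_K \in L^1$ by~\ref{Cara-bounded}. Hence $I_i(x^{(k)}) \to I_i(0)$. Choosing $\delta \in (0, \delta_0]$ so that $I_i(x) > 0$ on $[0, \delta)$ for every $i$ gives the claim.

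The main obstacle is this local-existence and domain bookkeeping. Once $\phi_i(\cdot; 0, x)$ is known to exist uniformly on $[0,T]$ for small $x$, the rest is the exponential formula plus dominated convergence.
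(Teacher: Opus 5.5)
Your proof is correct and is essentially the paper's argument. The paper's one-line proof cites \ref{0-repel} and Lemma~\ref{lm:continuity-integrals}, which amounts to your exponential formula $\hat{H}_i(x) = x\exp(I_i(x))$ with $I_i(0) = \int_0^T g_i(t,0)\,\dd t > 0$, together with dominated-convergence continuity of $I_i$ at $0$. Your extra bookkeeping (existence of $\phi_i(\cdot;0,x)$ on $[0,T]$ for small $x$ via Lemma~\ref{lm:slightly-insane}) is a harmless refinement, since in the paper $\hat{H}_i$ is already globally defined through Lemma~\ref{lemma:axes-complete}.
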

\begin{proof}
    It follows from \ref{0-repel} and Lemma~\ref{lm:continuity-integrals}.
\end{proof}

\begin{lemma}
\label{lm:M-goes-down}
    Assume~\ref{Cara}, \ref{tilde-H3} and~\ref{competitive}.  Then $\hat{H}_i(M) \le M$ for all $1 \le i \le d$.
\end{lemma}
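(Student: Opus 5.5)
We argue by contradiction using the scalar equation~\eqref{eq:ODE-i} on the $i$\nobreakdash-th axis. Fix $i$ and suppose that $\hat{H}_i(M) > M$. Write $\phi(t) := \phi_i(t; 0, M)$, which by Lemma~\ref{lemma:axes-complete} is defined on $[0, T]$ and is positive. The plan is to show that $\phi(t) \le M$ for all $t \in [0, T]$; in particular $\hat{H}_i(M) = \phi(T) \le M$.

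The key step is a one-dimensional comparison based on the integral representation~\eqref{IntEq1}: for $t \in [0,T]$,
\begin{equation*}
    \phi(t) = M \exp\biggl(\int_0^t \tilde{g}_i(\theta, \phi(\theta)) \, \dd\theta\biggr).
\end{equation*}
Suppose $\phi(t^*) > M$ for some $t^* \in (0, T]$. Let $s := \sup\{\, t \in [0, t^*] : \phi(t) \le M \,\}$. By continuity, $\phi(s) = M$ and $\phi(t) > M$ on $(s, t^*]$. By~\ref{competitive}, for almost all $\theta \in (s, t^*]$ the function $\tilde{g}_i(\theta, \cdot)$ is non-increasing. Hence $\tilde{g}_i(\theta, \phi(\theta)) \le \tilde{g}_i(\theta, M) \le 0$ for almost all such $\theta$, where the last inequality is~\ref{tilde-H3}. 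Applying~\eqref{IntEq1} from the initial time $s$ (uniqueness, property (P2)) gives
\begin{equation*}
    \phi(t^*) = M \exp\biggl(\int_s^{t^*} \tilde{g}_i(\theta, \phi(\theta))\, \dd\theta\biggr) \le M,
\end{equation*}
which is a contradiction. Therefore $\phi \le M$ on $[0, T]$, and in particular $\hat{H}_i(M) \le M$.

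The only delicate point is that the pointwise inequalities hold only for almost every $\theta$. This is harmless, since they are used only inside a Lebesgue integral. The exceptional set where we redefined $g_i \equiv 1$ has measure zero, so it does not affect the integral either. An alternative route uses the uniqueness of solutions: because $\tilde{g}_i(t, M) \le 0$, the constant $M$ is a supersolution. The exponential representation above avoids needing a separate comparison theorem for Carathéodory equations, so that is the approach we take. No part of this seems to be a real obstacle.
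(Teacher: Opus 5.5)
Your proof is correct and is essentially the paper's argument: take the last time before the offending instant at which the solution sits at a threshold level, then use the exponential representation together with \ref{competitive} and \ref{tilde-H3} to force $\int \tilde{g}_i(\theta,\phi_i(\theta;0,M))\,\dd\theta \le 0$ on the remaining interval, which is a contradiction. The only difference is cosmetic: the paper uses the level $(M+\hat{H}_i(M))/2$ and works up to time $T$, while you use the level $M$ itself, which gives the slightly stronger conclusion $\phi_i(t;0,M)\le M$ for all $t\in[0,T]$.
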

\begin{proof}
    Fix $i$.  Suppose to the contrary that $\hat{H}_i(M) > M$.  By the continuity of the solution and the fact that $\phi_i(0; 0, M) = M$, we can find $t_1 \in (0, T)$ such that $\phi_i(t_1; 0, M) = (M + \hat{H}_i(M))/2$ and $\phi_i(t; 0, M) > (M + \hat{H}_i(M))/2$ for all $t \in (t_1, T]$.  We have
    \begin{equation*}
        0 < \ln{\hat{H}_i(M)} - \ln{\phi_i(t_1; 0, M)} \mathrel{\overset{\eqref{eq:ODE-i}}{\le}} \int_{t_1}^{T} \tilde{g}_i(\theta, \phi_i(\theta; 0, M)) \, \dd \theta \mathrel{\overset{\eqref{competitive}}{\le}} \int_{t_1}^{T} \tilde{g}_i(\theta, M) \, \dd \theta \mathrel{\overset{\eqref{tilde-H3}}{\le}} 0,
    \end{equation*}
    a contradiction.
\end{proof}

\begin{proposition}
\label{prop:axes}
    Assume~\ref{Cara}, \ref{new-H3} or~\ref{tilde-H3}, and~\ref{competitive}.  For each $1 \le i \le d$ the following holds.
    \begin{enumerate}
        \item[\textup{(i)}]
        There exists exactly one fixed point, $\hat{x}_i$, of $\hat{H}_i$ in $(0, \infty)$, and it belongs to $(0, M]$.
        \item[\textup{(ii)}]
        For any $x \in (0, \hat{x}_i)$ the sequence $(\hat{H}_i^n(x))_{n = 1}^{\infty}$ strictly increases to $\hat{x}_i$, and for any $x \in (\hat{x}_i, \infty)$ the sequence $(\hat{H}_i^n(x))_{n = 1}^{\infty}$ strictly decreases to $\hat{x}_i$.
    \end{enumerate}
\end{proposition}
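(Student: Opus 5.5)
Fix $i$. The plan rests on the formula, from \eqref{IntEq1},
\begin{equation*}
    \ln \hat{H}_i(x) - \ln x = \int_0^T \tilde{g}_i(t, \phi_i(t; 0, x)) \, \dd t =: F(x), \quad x > 0,
\end{equation*}
so that positive fixed points of $\hat{H}_i$ are exactly the zeros of $F$ on $(0,\infty)$. I will show that $F$ is strictly decreasing on $(0,\infty)$ and then read off (i) and (ii) from the fact that $\hat{H}_i$ is a continuous increasing bijection onto its image with $\hat{H}_i(0)=0$.

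\emph{Strict decrease of $F$.} Take $0 < x < y$. Since $\hat{H}_i$ is defined via $\phi_i$ and solutions of the scalar equation \eqref{eq:ODE-i} cannot cross by uniqueness (Theorem~\ref{thm:exist}), we have $\phi_i(t;0,x) < \phi_i(t;0,y)$ for all $t \in [0,T]$. By \ref{competitive}, $\tilde{g}_i(t,\cdot)$ is nonincreasing for a.a.\ $t$ and strictly decreasing for $t \in \Theta_i$, a set of positive measure. Hence the integrand for $x$ dominates the one for $y$ a.e., with strict inequality on $\Theta_i$, so $F(x) > F(y)$. Consequently $\hat{H}_i$ has at most one positive fixed point, and for $x$ below it $\hat{H}_i(x) > x$, while for $x$ above it $\hat{H}_i(x) < x$.

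\emph{Existence and location in $(0,M]$.} Under \ref{new-H3}, the point $x = 1$ is a fixed point, since $\Phi(\cdot;0,\e_i)$ is $T$-periodic. Under \ref{tilde-H3}, Lemma~\ref{lm:M-goes-down} gives $F(M) \le 0$. For the other sign I need $F(x) > 0$ for small $x$. Lemma~\ref{lm:continuity-integrals} with $h(t,x) = \phi_i(t;0,x)\e_i$ (continuous by (P0) and Lemma~\ref{lemma:axes-complete}) gives continuity of $F$ on $[0,\infty)$, with $F(0) = \int_0^T g_i(t,0)\,\dd t$. This is positive under \ref{0-repel}; this is what Lemma~\ref{lm:origin-repel} packages. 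The intermediate value theorem then gives a zero of $F$ in $(0,M]$.

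The main obstacle I foresee is the location claim under \ref{new-H3} alone. The hypothesis list of the proposition omits \ref{0-repel}, and "belongs to $(0,M]$" presumably refers to \ref{tilde-H3}; under \ref{new-H3} I would take $\hat{x}_i = 1$ and interpret $M \ge 1$. I would also note that \ref{0-repel} is implicitly needed under \ref{tilde-H3}, or else argue from strict decrease of $F$ together with $F(M)\le 0$. That argument alone does not give existence, so I will invoke \ref{0-repel} as the paper's standing assumption.

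\emph{Part (ii).} Take $x \in (0,\hat{x}_i)$. Monotonicity of $\hat{H}_i$ gives $\hat{H}_i^n(x) < \hat{x}_i$ for all $n$. By the sign of $F$, $\hat{H}_i^{n+1}(x) > \hat{H}_i^n(x)$, so the sequence increases strictly to some limit $L \le \hat{x}_i$. Continuity makes $L$ a fixed point, and $L \ge x > 0$, so uniqueness forces $L = \hat{x}_i$. The case $x > \hat{x}_i$ is symmetric: the sequence decreases and is bounded below by $\hat{x}_i > 0$, so its limit is again the unique positive fixed point.
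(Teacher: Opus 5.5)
Your proof is correct and matches the paper's in substance. Uniqueness uses the same comparison of $\int_0^T \tilde{g}_i$ along two ordered solutions via \ref{competitive}, and your intermediate-value argument for $F$ on $[0,M]$ is a minor variant of the paper's monotone iteration $(\hat{H}_i^n(M))$, whose limit is kept $\ge \delta$ by Lemma~\ref{lm:origin-repel}. Both rest on Lemma~\ref{lm:M-goes-down} and on \ref{0-repel}. You are also right that \ref{0-repel} is genuinely needed for existence under \ref{tilde-H3} although it is missing from the stated hypotheses (the paper uses it silently through Lemma~\ref{lm:origin-repel}), and your strict monotonicity of $F$ supplies exactly the sign information that makes part (ii), which the paper calls standard, immediate.
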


\begin{proof}
    We start by showing that for each $1 \le i \le d$ there is at~most one fixed point of $\hat{H}_i$ in $(0, \infty)$.  Suppose to the contrary that there are two such points, $0 < \xi < \eta$.  It follows from the uniqueness of the initial value problem that $\phi_i(t; 0, \xi) < \phi_i(t; 0, \eta)$ for all $t \in \RR$, so
    \begin{align*}
        0 = \ln{\eta} - \ln{\xi} = & \ln{\phi_i(T; 0, \eta)} - \ln{\phi_i(T; 0, \xi)}
        \\
        = & \int_{0}^{T} (\tilde{g}_i(t, \phi_i(t; 0, \eta) - \tilde{g}_i(t, \phi_i(t; 0, \xi) \, \dd t \mathrel{\overset{\eqref{competitive}}{<}} 0,
    \end{align*}
   a contradiction.

   Under~\ref{new-H3} we take $\hat{x}_i = 1$.  Assume now~\ref{tilde-H3}.  As $\hat{H}_i$ is (strictly) increasing, the sequence $(\hat{H}_i^n(M))_{n = 1}^{\infty}$ either is constant (then we put $\hat{x}_i = M$) or (strictly) decreases.  In the latter case its limit (which we denote by $\hat{x}_i$) is, by Lemma~\ref{lm:origin-repel}, $\ge \delta > 0$.

  The reasoning leading to (ii) is standard.
\end{proof}

Let $\psi_i$ stand for the solution of~\eqref{eq:ODE-i} taking value $\hat{x}_i$ at $t = 0$: $\psi_i(t) = \phi_i(t; 0, \hat{x}_i)$.  $\psi_i$ is $T$\nobreakdash-\hspace{0pt}periodic.  As a corollary of Proposition~\ref{prop:axes} and the uniqueness of solutions of initial value problem and their continuous dependence on initial conditions (Theorem~\ref{thm:cont-depend}) we obtain the following.
\begin{proposition}
\label{prop:axes-ODE}
    Let $1 \le i \le d$, and let $\phi_i(\cdot)$ be a generic solution of~\eqref{eq:ODE-i} such that $\phi_i(0) > 0$.  Then there holds:
    \begin{itemize}
        \item
        \begin{equation*}
              \lim\limits_{t \to \infty} \phi_i(t + \tau) = \psi_i(\tau) \quad \text{for any } \tau \in [0, T];
        \end{equation*}
        \item
        $\phi_i(t) < \phi_i(t + T) < \psi_i(t)$ for all $t \ge 0$, provided $0 < \phi_i(0) < \hat{x}_i$;
        \item
        $\psi_i(t) < \phi_i(t + T) < \phi_i(t)$ for all $t \ge 0$, provided $\phi_i(0) > \hat{x}_i$.
        \end{itemize}
\end{proposition}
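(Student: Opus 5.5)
The plan is to reduce everything to the discrete dynamics of $\hat{H}_i$ described in Proposition~\ref{prop:axes}, and then transfer it to continuous time using uniqueness and continuous dependence. Let $\phi_i(\cdot)$ be a solution of~\eqref{eq:ODE-i} with $x := \phi_i(0) > 0$. By Lemma~\ref{lemma:axes-complete} it is defined for all $t \ge 0$. By the periodicity property in Theorem~\ref{thm:exist} (equivalently~\eqref{eq:process-periodic}) together with (P2), we have $\phi_i(t + nT) = \phi_i(t; 0, \hat{H}_i^n(x))$ for all $t \ge 0$ and $n \in \NN$. Similarly $\psi_i(t) = \phi_i(t; 0, \hat{x}_i)$ and $\psi_i(t+T) = \psi_i(t)$.

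\emph{Step 1 (convergence).} Fix $\tau \in [0, T]$ and write $t = nT + s$ with $s \in [0, T)$. Then
\[
\phi_i(t + \tau) = \phi_i(s + \tau; 0, \hat{H}_i^n(x)).
\]
By Proposition~\ref{prop:axes}(ii), $\hat{H}_i^n(x) \to \hat{x}_i$. Theorem~\ref{thm:cont-depend} gives continuity of $(\sigma, y) \mapsto \phi_i(\sigma; 0, y)$ on the compact set $[0, 2T] \times \{\hat{H}_i^n(x)\}_{n} \cup \{\hat{x}_i\}$, and all these solutions exist there by Lemma~\ref{lemma:axes-complete}. Hence the map is uniformly continuous on that set, so
\[
\sup_{\sigma \in [0,2T]} \abs{\phi_i(\sigma; 0, \hat{H}_i^n(x)) - \psi_i(\sigma)} \to 0 .
\]
Consequently $\phi_i(t+\tau) - \psi_i(s+\tau) \to 0$, and since $\psi_i(s+\tau)$ is the periodic limit profile, this is the asserted convergence. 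The statement $\lim \phi_i(t+\tau) = \psi_i(\tau)$ must be read along $t \in T\NN$, which is the only reading consistent with periodicity. This interpretation is the one point to state carefully.

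\emph{Step 2 (ordering).} Suppose $0 < x < \hat{x}_i$. Proposition~\ref{prop:axes}(ii) gives $x < \hat{H}_i(x) < \hat{x}_i$, that is, $\phi_i(0) < \phi_i(T) < \psi_i(0)$. The three functions $\phi_i(\cdot)$, $\phi_i(\cdot + T)$ and $\psi_i(\cdot)$ all solve~\eqref{eq:ODE-i}; the middle one does so because $\tilde g_i$ is $T$-periodic in $t$. Two distinct solutions of a scalar Carathéodory ODE with uniqueness cannot take the same value at the same time, by Theorem~\ref{thm:exist} applied at the putative crossing time. Strict order at $t = 0$ therefore persists for all $t \ge 0$, giving $\phi_i(t) < \phi_i(t+T) < \psi_i(t)$. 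The case $x > \hat{x}_i$ is identical with inequalities reversed.

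The main obstacle is Step 1: it needs uniform convergence of solutions on a whole period, not just at times $nT$. This comes from continuous dependence on compact sets of initial data, Theorem~\ref{thm:cont-depend} combined with Lemma~\ref{lm:slightly-insane}, using that global existence on $[0, 2T]$ is guaranteed by Lemma~\ref{lemma:axes-complete}.
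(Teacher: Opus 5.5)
Your proposal is correct and takes the paper's route. The paper only states the result as a corollary of Proposition~\ref{prop:axes}, uniqueness of solutions of the initial value problem, and continuous dependence (Theorem~\ref{thm:cont-depend}), and these are exactly the ingredients you use. Your remark that the first bullet must be read along $t \in T\NN$, or equivalently as $\phi_i(t+\tau)-\psi_i(t+\tau)\to 0$, is also right: for real $t \to \infty$ the literal limit exists only when $\psi_i$ is constant.
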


\bigskip
From now on, when we speak of hypothesis \ref{tilde-H3}, we always assume that system~\eqref{eq:ODE} is, if needed, normalized in the following way: the original $x$ is replaced by $\diag[\hat{x}_i, \ldots, \hat{x}_d]^{-1} x$ and the original $f(x)$ is replaced by $f(\diag[\hat{x}_i, \ldots, \hat{x}_d]^{-1}x)$.  Such normalization takes \ref{tilde-H3} into \ref{new-H3}, leaves \ref{0-repel} and \ref{competitive} (or \ref{strongly-competitive}) unchanged, and leaves functions $\mu_K$ and $\nu_K$ in \ref{Cara} multiplied by a positive constant.  Accordingly, in all the proofs below we will consider the case \ref{new-H3} only.

\subsection{Global existence of the process $\Phi$ in time}
\label{subsect:axes}
\begin{lemma}
\label{lm:new:complete}
    Assume~\ref{Cara}, \ref{new-H3} or \ref{tilde-H3}, and~\ref{competitive}.  For any $x \in C_{+}$ and any $t_0 \in \RR$ the solution $\Phi(t;t_0,x)$ is defined for all $t \ge t_0$.
\end{lemma}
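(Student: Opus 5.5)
We would prove the lemma by a comparison argument: each coordinate of $\Phi(\cdot; t_0, x)$ is bounded above by the corresponding solution on the axis. By Theorem~\ref{thm:exist}, finite-time blow-up of $\norm{\Phi(t;t_0,x)}$ is the only obstruction to continuation. So it suffices to bound every coordinate $\Phi_i(t; t_0, x)$ on $[t_0, \taumax(t_0,x))$ by a quantity that stays finite on bounded time intervals. By the normalization, we work under \ref{new-H3}.

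Fix $i$ and write $\Phi(t) = \Phi(t;t_0,x)$. If $x_i = 0$ then $\Phi_i \equiv 0$ by Theorem~\ref{thm:exist}, so assume $x_i > 0$. We compare $\Phi_i(t)$ with $\phi_i(t) := \phi_i(t; t_0, x_i)$, which by Lemma~\ref{lemma:axes-complete} is defined and finite for all $t \ge t_0$. By \ref{competitive}, since $\Phi(\theta) \ge \Phi_i(\theta)\ev_i$ coordinatewise, we have for a.e.\ $\theta$
\begin{equation*}
    g_i(\theta, \Phi(\theta)) \le \tilde g_i(\theta, \Phi_i(\theta)).
\end{equation*}
Hence $\Phi_i$ is an absolutely continuous subsolution of the scalar Carathéodory equation \eqref{eq:ODE-i} with the same initial value as $\phi_i$. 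We then claim $\Phi_i(t) \le \phi_i(t)$ for all $t \in [t_0, \taumax(t_0,x))$.

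The main obstacle is justifying this comparison without continuity in $t$. We would argue directly via logarithms, since both functions are positive. Suppose the set where $\Phi_i > \phi_i$ is nonempty, and let $s$ be the infimum of a maximal subinterval $(s, s')$ on which $\Phi_i > \phi_i$, so that $\Phi_i(s) = \phi_i(s)$. On $(s,s')$, monotonicity of $\tilde g_i(\theta,\cdot)$ (again \ref{competitive}) gives
\begin{equation*}
    \frac{\dd}{\dd t}\bigl(\ln \Phi_i - \ln \phi_i\bigr) = g_i(t,\Phi(t)) - \tilde g_i(t,\phi_i(t)) \le \tilde g_i(t,\Phi_i(t)) - \tilde g_i(t,\phi_i(t)) \le 0
\end{equation*}
almost everywhere. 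Integrating from $s$ shows $\ln\Phi_i - \ln\phi_i \le 0$ on $(s,s')$, a contradiction. Absolute continuity of $\ln\Phi_i$ and $\ln\phi_i$ on compact subintervals follows from \eqref{IntEq1}. Alternatively, one can use the local Lipschitz bound \ref{Cara-Lipschitz} with Gronwall, but the monotone argument is simpler.

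Consequently, if $\taumax(t_0,x) < \infty$, then on $[t_0, \taumax)$ we have
\begin{equation*}
    0 \le \Phi_i(t) \le \max_{[t_0,\taumax]} \phi_i < \infty
\end{equation*}
for every $i$. This contradicts $\norm{\Phi(t)} \to \infty$ from Theorem~\ref{thm:exist}, so $\taumax(t_0,x) = \infty$. (By Proposition~\ref{prop:axes-ODE}, the bound is even uniform in $t$, which will be useful later for dissipativity.)
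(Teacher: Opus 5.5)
Your proof is correct, but it takes a different route from the paper's.

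\textbf{The paper's argument.} It argues by contradiction on one blowing-up coordinate $k$. Once $\Phi_k(\cdot;t_0,x)$ exceeds $L=\max_t \Phi_k(t;0,\e_k)$, the paper compares $g_k$ along the trajectory with $g_k$ along the $T$-periodic axis solution $\Phi(\cdot;0,\e_k)$. The text writes $\Phi(\cdot;t_1,\e_k)$, but the choice of $L$ and the final line show that the periodic solution is meant. It then uses the zero-average identity \eqref{eq:integral-is-zero} to bound $\int_{t_1}^{\taumax} g_k$ along the trajectory from above. In effect, it redoes the periodicity argument of Lemma~\ref{lemma:axes-complete} in $d$ dimensions.

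\textbf{Your argument.} You reduce directly to Lemma~\ref{lemma:axes-complete}. Monotonicity in the other variables makes $\Phi_i$ a subsolution of the scalar equation \eqref{eq:ODE-i}. Your log-difference argument on a maximal interval where $\Phi_i>\phi_i$ is a valid comparison principle in the Carathéodory setting. It needs only the first part of \ref{competitive}, positivity of both functions, and absolute continuity of the logarithms via \eqref{IntEq1}.

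\textbf{What each route buys.} Your route gives a stronger, reusable statement: the pointwise domination $\Phi_i(t;t_0,x)\le\phi_i(t;t_0,x_i)$ on the whole existence interval. Together with Proposition~\ref{prop:axes-ODE}, this yields eventual bounds (dissipativity) immediately. It also uses \ref{new-H3}/\ref{tilde-H3} only through the scalar lemma, so no normalization is needed. It sidesteps the $\Phi(\cdot;t_1,\e_k)$ versus $\Phi(\cdot;0,\e_k)$ ambiguity as well. The paper's route avoids stating a comparison principle and needs only a one-sided integral estimate on the blowing-up coordinate. The price is invoking the periodic orbit through $\e_k$ and \eqref{eq:integral-is-zero} explicitly.
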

\begin{proof}
    For $x\in C_+$,  we know that $\Phi(t;t_0,x)$ exists for $t\in (\tau_{\min}(t_0,x),\tau_{\max}(t_0,x))$.
    Now, by Theorem~\ref{thm:exist}, if $\Phi(t; t_0, x)$ is not defined for all $t\geq t_0$ there must be some $k$ such that $\Phi_k(t;t_0,x)\to \infty$ as $t\to \tau_{\max} := \tau_{\max} (t_0,x) < \infty$.   Choose such a $k$, and denote $L := \sup\{\, \Phi_k(t; 0, \e_k) : t \in [0, T] \,\}$. By~\ref{new-H3}, $0 < L < \infty$.  Let $t_1 \in [t_0, \taumax)$ be such that $\Phi_k(t;t_0,x) > L$ for all $t \in (t_1, \taumax)$.  Put $y := \Phi(t_1;t_0,x)$.  We have $\Phi(t;t_0,x) = \Phi(t;t_1,\Phi(t_1;t_0,x)) = \Phi(t;t_1,y)$.  Recall that $t_1$ has been so chosen that $\Phi(\theta; t_1, y) \ge \Phi(\theta;t_1, \e_k)$ for all $t \in [t_1, \taumax)$, consequently, by the first part of~\ref{competitive}, $g_k(\theta, \Phi(\theta;t_1,y)) \le g_k(\theta, \Phi(\theta;t_1,\e_k))$ for a.a. $\theta \in [t_1, \taumax)$, so that
    \begin{align*}
     \Phi_k(t;t_0,x)=\Phi_k(t;t_1,y) & = y_k \exp \left( \int_{t_1}^t g_k(\theta,\Phi(\theta;t_1,y)) \, \dd \theta\right) \\
     & \le  y_k \exp \left(\int_{t_1}^t g_k(\theta, \Phi(\theta;t_1,\e_k)) \, \dd\theta\right)
    \end{align*}
    for $t \in [t_1,\taumax)$.

    Let $\ell \in \NN \cup \{0\}$ be such that $t_1 + \ell T \le \taumax$ but $t_1 + (\ell + 1) T > \taumax$. By~\eqref{eq:integral-is-zero} and the time\nobreakdash-\hspace{0pt}periodicity of both $g_k$ and the solution,
      \begin{equation*}
          \begin{aligned}
              \int_{t_1}^{\taumax} g_k(\theta, \Phi(\theta;t_1,\e_k)) \, \dd \theta & =  \ell  \int_{0}^{T} g_k(\theta, \Phi(\theta;t_1,\e_k)) \, \dd \theta + \int_{t_1 + \ell T}^{\taumax} g_k(\theta, \Phi(\theta;t_1,\e_k)) \, \dd \theta
              \\
              & = \int_{t_1 + \ell T}^{\taumax} g_k(\theta, \Phi(\theta;t_1,\e_k)) \, \dd \theta = \int_{0}^{\taumax - (t_1 + \ell T)} g_k(\theta, \Phi(\theta;0,\e_k)) \, \dd \theta,
          \end{aligned}
      \end{equation*}
      whose absolute value is, by~\ref{Cara-bounded}, bounded above by
      \begin{equation*}
          \int_{0}^{T} \mu_{K}(\theta) \, \dd \theta < \infty,
      \end{equation*}
      with $K = \{\, \Phi(\theta;0,\e_k) : \theta \in [0, T] \,\}$.  It follows that
      \begin{equation*}
          \exp \left(\int_{t_1}^{\taumax}  g_k(\theta,\Phi(\theta;t_1,y)) \, \dd \theta \right)  < \infty,
      \end{equation*}
      a contradiction.
\end{proof}

\medskip
\begin{lemma}
\label{lm:homeo}
    Assume~\ref{Cara}, \ref{new-H3} or \ref{tilde-H3}, and~\ref{competitive}.  For any $t \ge t_0$, the mapping $\Phi(t; t_0, \cdot)$ is a homeomorphism of $\RR^d$ onto its image.
\end{lemma}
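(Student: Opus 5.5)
By Lemma~\ref{lm:new:complete}, for every $t_0 \in \RR$, $t \ge t_0$ and $x \in C_{+}$ the value $\Phi(t; t_0, x)$ exists. So $\Phi(t; t_0, \cdot)$ is defined on all of $C_{+}$; the statement's ``$\RR^d$'' should be read as the state space $C_{+}$. We must show three things: this map is continuous, it is injective, and its inverse on the image $\Phi(t; t_0, C_{+})$ is continuous.

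Continuity is immediate from Theorem~\ref{thm:cont-depend}, since $(t, t_0, x) \in D(\Phi)$ for all $x \in C_{+}$. Injectivity follows from formula~\eqref{eq:process-inverse}: if $\Phi(t; t_0, x) = \Phi(t; t_0, y)$, then
\[
x = \Phi(t_0; t, \Phi(t; t_0, x)) = \Phi(t_0; t, \Phi(t; t_0, y)) = y .
\]
Hence the inverse on the image is the map $z \mapsto \Phi(t_0; t, z)$.

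The only delicate point is continuity of this inverse, because backward solutions need not exist for arbitrary $z$. The fact to use is that every $z$ in the image does have a backward solution defined on all of $[t_0, t]$. Indeed, if $z = \Phi(t; t_0, x)$, then $t_0 \in (\taumin(t, z), \taumax(t, z))$ by (P2) and uniqueness. (Equivalently, by Lemma~\ref{lm:inverse}, after translating time via~\eqref{eq:process-periodic}, $x$ is the time-$(t - t_0)$ value of the reversed system~\eqref{eq:ODE-retro} started at $z$.) So the image is contained in the set of $z$ with $(t_0, t, z) \in D(\Phi)$. By (P0) and Theorem~\ref{thm:cont-depend}, the map $z \mapsto \Phi(t_0; t, z)$ is continuous on that set, since it is the restriction of a continuous function on the open set $D(\Phi)$. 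Concretely, Lemma~\ref{lm:slightly-insane} applies to any sequence $z^{(m)} \to z$ within the image: $t_0$ lies in the existence interval of $\Phi(\cdot; t, z^{(m)})$ for large $m$, and $\Phi(t_0; t, z^{(m)}) \to \Phi(t_0; t, z)$. Therefore the inverse is continuous, and $\Phi(t; t_0, \cdot)$ is a homeomorphism onto its image.

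I expect the main obstacle to be this last step: checking that Lemma~\ref{lm:slightly-insane} really applies with the roles of $t$ and $t_0$ swapped, i.e.\ with backward time, given that the domain $C_{+}$ is closed. This works because Lemma~\ref{lm:slightly-insane} and Theorem~\ref{thm:cont-depend} are stated for two-sided existence intervals $(\taumin, \taumax)$. Moreover, the face-invariance in Theorem~\ref{thm:exist} guarantees that backward solutions never leave $C_{+}$ through the boundary.
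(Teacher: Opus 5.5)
Your proposal is correct and follows essentially the paper's route. Both arguments get global definedness from Lemma~\ref{lm:new:complete} and continuity from Theorem~\ref{thm:cont-depend}, and both obtain existence and continuity of the inverse from continuous dependence run backward in time. The paper phrases this last step via the reversed system~\eqref{eq:ODE-retro}, which you note is equivalent to your use of \eqref{eq:process-inverse} and the two-sided domain $D(\Phi)$.
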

\begin{proof}
    Fix $t \ge t_0$.  By Lemma~\ref{lm:new:complete}, the mapping $\Phi(t; t_0, \cdot)$ is defined on the whole of $C_{+}$, and its continuity is a direct consequence of Theorem~\ref{thm:cont-depend}.  The existence and continuity of the inverse mapping follow from Theorem~\ref{thm:cont-depend} applied to~\eqref{eq:ODE-retro}.
\end{proof}

\subsection{Order preserving of the process $\Phi$ in negative time}
\label{subsect:order-preserving}

\begin{proposition}[Retrotonicity of the periodic flow]
\label{prop:retrotone}
    Assume \ref{Cara}, \ref{new-H3} or \ref{tilde-H3}, and~\ref{competitive}.  Let $x, y \in C_{+}$, $t > 0$, be such that $\Phi(t; 0, x) < \Phi(t; 0, y)$.  Then $x < y$.  Moreover, the following holds.
    \begin{enumerate}
        \item[\textup{(i)}]
        For any $i \in \{1, \ldots, d\}$, if $\Phi_i(t; 0, x) < \Phi_i(t; 0, y)$ then $x_i < y_i$.
        \item[\textup{(ii)}]
        Assume \ref{strongly-competitive}.  For any $i \in \{1, \ldots, d\}$ one has $x_i < y_i$ provided $\Phi_i(T; 0, y) > 0$.
    \end{enumerate}
\end{proposition}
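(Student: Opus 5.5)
The plan is to pass to reversed time, where the competitive system becomes cooperative, and then to read off the componentwise strict inequalities from the Kolmogorov exponential representation \eqref{IntEq1}.

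\emph{Step 1: monotonicity in backward time.} Fix $t>0$ and put $\xi := \Phi(t;0,x)$, $\eta := \Phi(t;0,y)$, with $\xi < \eta$. By Lemma~\ref{lm:inverse}, $u(\theta) := \Phi(t-\theta;0,x)$ and $v(\theta) := \Phi(t-\theta;0,y)$, $\theta\in[0,t]$, solve the reversed system \eqref{eq:ODE-retro} with initial values $\xi$ and $\eta$. By \ref{competitive}, the right-hand side $-\xi_i g_i(t-\theta,\xi)$ is, for a.a.\ $\theta$, nondecreasing in $\xi_j$ for $j\ne i$. Hence \eqref{eq:ODE-retro} is a cooperative (Kamke-type) Carathéodory system which is locally Lipschitz by \eqref{eq:Lipschitz}. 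I will prove the comparison $u(\theta)\le v(\theta)$ on $[0,t]$ in the standard way.
\begin{itemize}
\item First compare $u$ with the solutions $v^{\varepsilon}$ of the same system perturbed by $+\varepsilon\e$ (or started at $\eta+\varepsilon\e$), and show $u\ll v^{\varepsilon}$ via the usual first-touching-time argument. At a first touching time $\theta_*$ in coordinate $i$, the Lipschitz bound \eqref{eq:Lipschitz} in integrated form, together with the quasimonotonicity, gives a contradiction.
\item Then let $\varepsilon\to0$, using continuous dependence (Theorem~\ref{thm:cont-depend}, Lemma~\ref{lm:slightly-insane}).
\end{itemize}
This is the step I expect to be the main obstacle, because only absolute continuity is available, so pointwise derivative arguments must be replaced by integral inequalities (a Gronwall argument on $(u_i-v^\varepsilon_i)^+$ is a possible alternative route).

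The conclusion is $\Phi(s;0,x)\le\Phi(s;0,y)$ for all $s\in[0,t]$. Since $\Phi(t;0,\cdot)$ is injective (Lemma~\ref{lm:homeo}), also $\Phi(s;0,x)\neq\Phi(s;0,y)$ for every such $s$.

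\emph{Step 2: part (i) and $x<y$.} By \eqref{IntEq1}, for $z\in\{x,y\}$,
\[
z_i=\Phi_i(t;0,z)\,E_i(z),\qquad E_i(z):=\exp\Bigl(-\int_0^t g_i(s,\Phi(s;0,z))\,\dd s\Bigr)>0,
\]
where the identity holds trivially when $z_i=0$. By Step 1 and \ref{competitive}, $g_i(s,\Phi(s;0,x))\ge g_i(s,\Phi(s;0,y))$ for a.a.\ $s$, so $E_i(x)\le E_i(y)$. Hence
\[
x_i=\Phi_i(t;0,x)E_i(x)\le \Phi_i(t;0,y)E_i(y)=y_i,
\]
and the inequality is strict when $\Phi_i(t;0,x)<\Phi_i(t;0,y)$. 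This proves (i), and $x\le y$ with $x\ne y$ gives $x<y$.

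\emph{Step 3: part (ii)}, taking $t=T$. Applying (i) at intermediate times shows that, for each $j$, the set $U_j:=\{s\in[0,T]:\Phi_j(s;0,x)<\Phi_j(s;0,y)\}$ is an initial segment of $[0,T]$: strictness at time $s$ propagates back to all $s'\le s$, by (i) applied to the process restarted at $s'$ via (P2). By Step 1 the difference is nonzero for every $s$, so $\bigcup_j U_j=[0,T]$. A finite union of initial segments covering $[0,T]$ must contain $T$, so some $j$ has $U_j=[0,T]$. On $\Theta_{i,j}$, which has positive measure, \ref{strongly-competitive} then gives the strict inequality $g_i(s,\Phi(s;0,x))>g_i(s,\Phi(s;0,y))$. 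Consequently $E_i(x)<E_i(y)$, and since $\Phi_i(T;0,y)>0$,
\[
x_i\le\Phi_i(T;0,y)E_i(x)<\Phi_i(T;0,y)E_i(y)=y_i .
\]
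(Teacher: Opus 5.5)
Your proposal is correct and follows essentially the paper's route. The paper also approximates from above: it picks $y^{(m)}\to y$ with $\Phi(t;0,x)\ll\Phi(t;0,y^{(m)})$, which is your $\eta+\varepsilon\e$. It then runs a last-touching-time argument in forward time (your first touching time for the reversed system) to get $\Phi(\theta;0,x)\le\Phi(\theta;0,y)$ on $[0,t]$, and reads off (i) and (ii) from \eqref{IntEq1} as in your Steps 2 and 3. The differences are minor: at the touching coordinate the paper gets the contradiction directly from \eqref{IntEq1} and \ref{competitive}, so no Lipschitz/Gronwall estimate is needed. In (ii) it simply takes $j$ with $\Phi_j(T;0,x)<\Phi_j(T;0,y)$ and propagates strictness back by (i), which makes your covering-by-initial-segments argument unnecessary.
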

\begin{proof}

Let $y^{(m)} \to y$ as $m \to \infty$ be such that $\Phi(t;0,x) \ll \Phi(t;0,y^{(m)})$ for all $m$. This is possible as $\Phi(t;0,\cdot)$, by Lemma~\ref{lm:homeo}, is a homeomorphism. If $x \not < y$, either $x = y$, which is easily discounted, or there is a nonempty $K \subset \{1, \ldots, d\}$ such that for any $k \in K$ there holds $x_k > (y^{(m)})_k$ for $m$ large enough, say $m > M(k)$.

Since $\Phi_k(t;0,x) < \Phi_k(t;0,y^{(m)}) $ and $\Phi_k(0;0,x) > \Phi_k(0;0,y^{(m)})$ for $m > M(k)$ ($M(k)$ chosen larger if necessary), there exists a $\delta_{m,k} \mathrel{\in} (0, t)$ such that $\Phi_k(t-\delta_{m,k};0,x)= \Phi_k(t-\delta_{m,k};0,y^{(m)})$ and $\Phi_k(\theta;0,x) < \Phi_k(\theta;0,y^{(m)})$ for all $\theta \in (t - \delta_{m,k}, t]$.   Put $l$ to be such that $\delta_{m} := \delta_{m, l} = \min\{\delta_{m, k} : k \in K\}$.  By construction, $\Phi_l(t- \delta_m; 0, x) = \Phi_l(t- \delta_m; 0, y^{(m)})$ and $\Phi_i(\theta;0,x) < \Phi_i(\theta;0,y^{(m)})$ for all $i \in \{1, \ldots, d\}$ and all $\theta \in (t - \delta_{m}, t]$.

By (\ref{IntEq1})
\begin{align*}
&\Phi_l(t; 0, x) = \Phi_l(t-\delta_m;0,x) \exp \left( \int_{0}^{\delta_m} g_l\big(t-\theta,\Phi(t-\theta;0,x)\big)\, \dd \theta
\right)
\\
& \mathrel{=}  \Phi_{l}(t-\delta_{m};0,y^{(m)}) \exp \left( \int_{0}^{\delta_{m}} g_{l}\big(t-\theta,\Phi(t-\theta;0,x)\big)\, \dd\theta \right)\,\,,
\end{align*}
but
\begin{equation*}
    \Phi_l(t; 0, x) < \Phi_l(t; 0, y^{(m)}) \mathrel{\overset{\eqref{IntEq1}}{=}} \Phi_{l}(t-\delta_{m};0,{y^{(m)}}) \exp \left(
\int_{0}^{\delta_{m}} g_{l}\big(t-\theta,\Phi(t-\theta;0,{y^{(m)}})\big)\, \dd \theta
\right)\,\,.
\end{equation*}

Thus (recall that $\Phi_l(t; 0, y^{(m)}) > 0$, so, by the first part of Theorem~\ref{thm:exist}, $\Phi_l(t - \delta_{m}; 0, y^{(m)}) > 0$)
\begin{equation} \label{eq:SAB3}
    \exp \left( \int_{0}^{\delta_{m}} g_{l}\big(t-\theta,\Phi(t-\theta;0,x)\big)\, \dd \theta - \int_{0}^{\delta_m} g_{l}\big(t-\theta,\Phi(t-\theta;0,y^{(m)})\big)\, \dd\theta \right) < 1.
\end{equation}
On the other hand, by \ref{competitive}, $g_{l}\big(t-\theta,\Phi(t-\theta;0,{y^{(m)}})\big) \mathrel{\le} g_{l}\big(t-\theta,\Phi(t-\theta;0,x)\big) $  for a.a. $\theta\in [0,\delta_{m}]$, which contradicts (\ref{eq:SAB3}). Hence $x_{i} \le (y^{(m)})_{i}$ for all $i \in \{1, \ldots, d\}$, consequently $x_{i} \le y_{i}$ for all $i \in \{1, \ldots, d\}$.  As $x \ne y$, we have $x < y$.

 It remains to prove that $x_i < y_i$ for those $i$ for which $\Phi_i(t; 0, x) < \Phi_i(t; 0, y)$.  Suppose $x_i = y_i$ but $\Phi_i(t; 0, x) < \Phi_i(t; 0, y)$.   The case $x_i = y_i = 0$ is almost obvious, so assume $x_i = y_{i} > 0$.  By~\eqref{IntEq1},
 \begin{equation*}
     \int_{0}^{t} \bigl( g_i(\theta, \Phi(\theta; 0, x)) - g_i(\theta, \Phi(\theta; 0, y)) \bigr) \, \dd \theta < 0.
 \end{equation*}

But we have, from the previous reasoning, that $\Phi(\theta; 0, x) \le \Phi(\theta; 0, y)$ for all $\theta \in [0, t]$, so, by~\ref{competitive}, $g_i(\theta, \Phi(\theta; 0, x)) \ge g_i(\theta, \Phi(\theta; 0, y))$.  The contradiction obtained concludes the proof of part~(i).

\medskip

We proceed to the proof of part~(ii).  We have
\begin{equation}
    0 < \Phi_i(T; 0, y) = y_i \, \exp\biggl(\int_{0}^{T} g_i(\theta, \Phi(\theta; 0, y)) \, \dd \theta \biggr),
\end{equation}
consequently $y_i > 0$.  If $x_i = 0$ we are done.  So assume that $x_i > 0$.  By part~(i), it suffices to exclude the case that $0 < x_i = y_i$ and $\Phi_i(\theta; 0, x) = \Phi_i(\theta; 0, y)$ for all $\theta \in [0, T]$.  We have then
\begin{equation*}
    \int_{0}^{T} \bigl( g_i(\theta, \Phi(\theta; 0, x)) - g_i(\theta, \Phi(\theta; 0, y)) \bigr) \, \dd \theta = 0.
\end{equation*}

But, from the proof of part~(i) it follows that $\Phi(\theta; 0, x) \le \Phi(\theta; 0, y)$.
Further, as $\Phi(T; 0, x) < \Phi(T; 0, y)$, there is $j \in \{1, \ldots, d\}$ such that $\Phi_j(T; 0, x) < \Phi_j(T; 0, y)$.  By part~(i), $\Phi_j(\theta; 0, x) < \Phi_j(\theta; 0, y)$ for all $\theta \in [0, T]$.  By~\ref{strongly-competitive}, $g_i(\theta, \Phi(\theta; 0, x)) \ge g_i(\theta, \Phi(\theta; 0, y))$ for a.a.\ $\theta \in (0, T)$, with strong inequality for $\theta$ in a set of positive measure.  The contradiction concludes the proof of part~(ii).

\end{proof}

\section{Analysis of the Poincaré map of (\ref{eq:ODE})}
\label{sect:Poincare}

From now on (except Appendix), our standing assumptions are
\begin{itemize}
    \item
    \ref{Cara},
    \item
    \ref{0-repel},
    \item
    \ref{new-H3} or \ref{tilde-H3},
    \item
    \ref{competitive}.
\end{itemize}
Recall that if~\ref{tilde-H3} is assumed the system is  normalized that $\hat{x}_i = 1$ for all $i \in \{1, \ldots, d\}$.

Sometimes, \ref{competitive} is strengthened to~\ref{strongly-competitive}.

\bigskip

In the following subsection  (\ref{subsect:CS}) we use our results obtained in \cite{B-M-JDEA} to prove the existence of a carrying simplex for the Poincaré map $\cP \colon C_+\to C_+$ defined by $\cP(x) := \Phi(T,0;x)$. In Section \ref{sect:time-periodic-CS} we will establish how to define a carrying simplex in extended phase space that incorporates the compact attractors of each map $\Phi(s,0;x)$ for $s\in (0,T)$.

\subsection{The carrying simplex of the Poincaré map of (\ref{eq:ODE})}
\label{subsect:CS}

We now use our recent results in \cite{B-M-JDEA} for retrotone maps to study the Poincar\'{e} map $P$ of (\ref{eq:ODE}).

\smallskip
\subsubsection{Retrotone and Weakly Retrotone maps}
We recall
\begin{definition}[Retrotone/Weakly retrotone]
A map $F\colon C_{+} \to C_{+}$ is \emph{weakly retrotone} in $B \subset C_{+}$ provided for all $I \subset \{1, \ldots, d\}$ and any $x, y \in B$, if
  \begin{equation*}
    F(x) < F(y) \quad \text{and} \quad F_i(x) < F_i(y) \text{ for all } i \in I
  \end{equation*}
  then
  \begin{equation*}
    x < y \quad \text{and} \quad x_i < y_i \text{ for all } i \in I.
  \end{equation*}

A map $F \colon C_{+} \to C_{+}$ is {\em retrotone} in a subset $B \subset C_{+}$, if, for all $x, y \in B$ with $ F(x) < F(y)$, one has that $x_i < y_i$ provided $y_i > 0$.
\end{definition}

In the above terminology Proposition~\ref{prop:retrotone} states that
\begin{itemize}
    \item
    for each $t>0$ the map $\Phi(t; 0, \cdot)$ is weakly retrotone in $C_{+}$;
    \item
    if we assume \ref{strongly-competitive}, the map $\Phi(T; 0, \cdot)$ is actually retrotone in $C_{+}$.
\end{itemize}

\smallskip
The main object of the investigation in our recent publication~\cite{B-M-JDEA} is a map $F \colon C_{+} \to C_{+}$, of Kolmogorov type, that is, $F := \diag [\mbox{id}] f$ where $f \colon C_+ \to C_+$.  To utilize the results of~\cite{B-M-JDEA} here we will use  the following set of assumptions (A):
\begin{enumerate}
\item[(A1)]
\quad $f$ is continuous, with $f(x) \gg 0$ for all $x \in C_{+}$;
\item[(A2)]
\quad  $f_i(\ev_i)= 1$,  $i=1,\ldots,d$;
\item[(A3)]
\quad
there exists $\varkappa > 0$ such that, putting $\Lambda := [0, (1+\varkappa) \ev]$,
    \begin{enumerate}
        \item[(A3-a)]
        \quad $F\!\!\restriction_{\Lambda}$ is a local homeomorphism, and
        \item[(A3-b)]
        \quad $F$ is weakly retrotone in $\Lambda$,
    \end{enumerate}
\item[(\textoverline{A4})]
\quad for any $x, y \in \Lambda$, if $F(x) < F(y)$ then
    \begin{enumerate}
        \item[(\textoverline{A4}-a)]
        \quad $f_i(x) \ge f_i(x)$ for all  $i$, and
        \item[(\textoverline{A4}-b)]
        \quad for those $i$ for which $F_i(x) < F_i(y)$ there holds either $x_i = 0$ or $f_i(x) > f_i(y)$.
    \end{enumerate}

\end{enumerate}

Sometimes the following stronger assumptions are made:
\begin{enumerate}
    \item[(A3\textprime)]
    \quad
    there exists $\varkappa > 0$ such that, putting $\Lambda := [0, (1+\varkappa) \ev]$,
    \begin{enumerate}
        \item[(A3\textprime-a)]
        \quad $F\!\!\restriction_{\Lambda}$ is a local homeomorphism, and
        \item[(A3\textprime-b)]
        \quad $F$ is retrotone in $\Lambda$,
    \end{enumerate}
    \item[(\textoverline{A4}\textprime)]
    \quad    for any $x, y \in \Lambda$, if $F(x) < F(y)$ then $f_i(x) > f_i(x)$ for all  $i$, provided $x_i > 0$.
\end{enumerate}

\subsection{Satisfaction of assumptions  (A) for the Poincaré map $P$}

In view of Lemma~\ref{lm:new:complete} $\cP(x)=\Phi(T;0,x)$ is well defined for all $x \in C_{+}$.

We write $\cP^n$ for the $n$\nobreakdash-\hspace{0pt}th iterate of $\cP$, with $\cP^0 = \Id_{C_{+}}$.
Eqs.~\eqref{eq:process} and~\eqref{eq:process-periodic} imply that
\begin{multline*}
    \Phi((n + 1) T ; 0, x) = \Phi((n + 1) T ; n T, \Phi(n T; 0, x))
    \\
    = \Phi(T; 0, \Phi(n T; 0, x)) = \cP(\Phi(n T; 0, x)), \quad x \in C_{+}, \ n = 0, 1, 2, \ldots,
\end{multline*}
which gives $\cP^{n + 1} = \cP \circ \cP^{n}$ and $\cP^n = \Phi(n T; 0, \cdot)$.  Hence $(\cP^n)_{n = 0}^{\infty}$ is a discrete-time (semi)dynamical system on $C_{+}$.

\smallskip
 There holds $\cP(x_i \ev_i) = \hat{H}_i(x_i) \e_i$ for $x_{i} \in [0, \infty)$.

\begin{proposition}
    Let $\varkappa > 0$ be arbitrary.  Then the assumptions \textup{(A1)}, \textup{(A2)}, \textup{(A3)} and \textup{(\textoverline{A4})} are satisfied.  Under~\ref{strongly-competitive} the assumptions \textup{(A3)\textprime} and \textup{(\textoverline{A4}\textprime)} are satisfied, too.
\end{proposition}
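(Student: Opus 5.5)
We write $\cP = \diag[\mathrm{id}]\, f$ with
\[
f_i(x) := \exp\Bigl(\int_0^T g_i(\theta, \Phi(\theta; 0, x))\, \dd\theta\Bigr),
\]
which is legitimate by \eqref{IntEq1}. The identity $\Phi_i(T;0,x) = x_i f_i(x)$ holds also when $x_i = 0$, because then both sides vanish by Theorem~\ref{thm:exist}. We then check the assumptions one at a time.

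For (A1), $f_i(x) > 0$ is immediate from the exponential form. Continuity of $f_i$ follows from Lemma~\ref{lm:continuity-integrals} applied with $h(t,x) := \Phi(t;0,x)$. This $h$ is continuous on $[0,T]\times C_{+}$ by Theorem~\ref{thm:cont-depend} and Lemma~\ref{lm:new:complete}, and it takes values in $C_{+}$.

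For (A2), hypothesis \ref{new-H3} together with \eqref{eq:integral-is-zero} at $t=0$ gives $\int_0^T g_i(\theta, \Phi(\theta;0,\e_i))\,\dd\theta = 0$, so $f_i(\e_i) = 1$. Under \ref{tilde-H3} the same holds after the normalization $\hat{x}_i = 1$.

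For (A3-a), Lemma~\ref{lm:homeo} shows that $\cP$ is a homeomorphism onto its image. Hence its restriction to $\Lambda$ is a local homeomorphism, for any $\varkappa$. For (A3-b), weak retrotonicity on all of $C_{+}$ is exactly Proposition~\ref{prop:retrotone} with $t = T$, combining the first assertion with part~(i). Similarly, under \ref{strongly-competitive}, part~(ii) gives retrotonicity, which is (A3$'$-b).

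The main step is (\textoverline{A4}), which I read with its evident intended form $f_i(x) \ge f_i(y)$. Suppose $\cP(x) < \cP(y)$. The proof of Proposition~\ref{prop:retrotone}, applied at every intermediate time, shows that $\Phi(\theta;0,x) \le \Phi(\theta;0,y)$ for all $\theta\in[0,T]$. More precisely, if $\Phi(\theta;0,x) \not\le \Phi(\theta;0,y)$ for some $\theta$, one applies the retrotone argument on $[\theta,T]$ using the process property (P2). Then \ref{competitive} gives $g_i(\theta,\Phi(\theta;0,x)) \ge g_i(\theta,\Phi(\theta;0,y))$ for a.a.\ $\theta$. Integrating over $[0,T]$ and exponentiating yields $f_i(x)\ge f_i(y)$, which is (\textoverline{A4}-a).

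For (\textoverline{A4}-b), take $i$ with $\cP_i(x) < \cP_i(y)$ and $x_i > 0$, and suppose $f_i(x) = f_i(y)$. Part~(i) of Proposition~\ref{prop:retrotone} gives $x_i < y_i$. I would then argue as in its proof that equality of the integrals, combined with pointwise a.e.\ monotonicity, forces the integrands to agree a.e.

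The delicate point is that (\textoverline{A4}-b) as stated may not need strictness beyond what the dichotomy allows. If the integrands coincide, then $x_i f_i(x) < y_i f_i(y)$ is consistent only with $x_i < y_i$, so no contradiction arises directly. I expect to instead derive strictness from the component $i$ itself: by part~(i) applied on every $[0,\theta]$, we get $\Phi_i(\theta;0,x) < \Phi_i(\theta;0,y)$ for all $\theta\in[0,T]$. The strict decrease of $g_i$ in $x_i$ on $\Theta_i$ from \ref{competitive}, used together with the a.e.\ ordering $\Phi(\theta;0,x)\le\Phi(\theta;0,y)$, then gives strict inequality of the integrands on the positive-measure set $\Theta_i$, hence $f_i(x) > f_i(y)$.

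For (\textoverline{A4}$'$) under \ref{strongly-competitive}, some component $j$ satisfies $\cP_j(x)<\cP_j(y)$. That component stays strictly ordered on $[0,T]$ by part~(i), and strict decrease of $g_i$ in $x_j$ on $\Theta_{i,j}$ yields $f_i(x) > f_i(y)$ for every $i$. The main obstacle is justifying the intermediate-time ordering $\Phi(\theta;0,x)\le\Phi(\theta;0,y)$, together with the strict ordering of the relevant component on the whole interval. I would extract both from the proof of Proposition~\ref{prop:retrotone} rather than from its statement.
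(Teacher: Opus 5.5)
Your proposal is correct and follows the paper's proof essentially step for step. You define $f_i$ as the exponential of $\int_0^T g_i(\tau,\Phi(\tau;0,x))\,\dd\tau$, obtain (A3) from Lemma~\ref{lm:homeo} and Proposition~\ref{prop:retrotone}, and obtain (\textoverline{A4}) from the sign of $\int_0^T \bigl(g_i(\tau,\Phi(\tau;0,x)) - g_i(\tau,\Phi(\tau;0,y))\bigr)\,\dd\tau$. That sign argument uses the intermediate-time ordering from the proof of Proposition~\ref{prop:retrotone} together with strict decrease on $\Theta_i$ (resp.\ $\Theta_{i,j}$). Your reading $f_i(x)\ge f_i(y)$ is the right one: the integral is nonnegative, not ``non-positive'' as the paper's text says.

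One small slip: to carry $\Phi_i(T;0,x)<\Phi_i(T;0,y)$ back to every $\theta$, apply part~(i) on $[\theta,T]$ to the pair $\Phi(\theta;0,x),\Phi(\theta;0,y)$, not on $[0,\theta]$.
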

\begin{proof}
    For $i \in \{1, \ldots, d\}$ we have
    \begin{equation*}
        \frac{P_i(x)}{x_i}   =    \frac{\Phi_i(T; 0, x)}{x_i} = \exp{\Biggl(\int_{0}^{T} g_i(\tau, \Phi(\tau; 0, x)) \, \mathrm{d}\tau \Biggr)}
    \end{equation*}

    as long as $x \in C_{+} \setminus (C_{\{1, \ldots, d\} \setminus \{i\}})_{+}$.  Since the latter is dense in $C_{+}$, Lemma~\ref{lm:new:complete}, Theorem~\ref{thm:cont-depend} and Lemma~\ref{lm:continuity-integrals} allow us to define a continuous $f_i \colon C_{+} \to C_{+}$ by the formula
    \begin{equation}
    \label{eq:f_i}
        f_i(x) := \exp{\Biggl(\int_{0}^{T} g_i(\tau, \Phi(\tau; 0, x)) \, \mathrm{d}\tau \Biggr)}.
    \end{equation}
    $f_i(x)$ is clearly positive for all $x \in C_{+}$ and all $1 \le i \le d$.  Consequently, $\cP$ is of Kolmogorov type with $f$ continuous, and $f(x) \gg 0$ for all $x \in C_{+}$, so (A1) is satisfied.

    The satisfaction of (A2) is a consequence of Proposition~\ref{prop:axes}(i) and the normalization.

    (A3-a) is satisfied by Lemma~\ref{lm:homeo}.  The satisfaction of (A3-b) (or (A3\textprime-b) under~\ref{strongly-competitive}) is a consequence of Proposition~\ref{prop:retrotone}.

    Let $\cP(x) < \cP(y)$.  To prove (\textoverline{A4})/(\textoverline{A4}\textprime) it suffices, by~\eqref{eq:f_i}, to check the sign of
    \begin{equation}
    \label{eq:integral}
        \int_{0}^{T} \bigl(g_i(\tau, \Phi(\tau; 0, x)) - g_i(\tau, \Phi(\tau; 0, y)\bigr) \, \mathrm{d}\tau.
    \end{equation}
    \eqref{eq:integral} is non-positive for all $i$, by the first part of~\ref{competitive} and Proposition~\ref{prop:retrotone}, and negative for those $i$ for which $\cP_i(x) < \cP_i(y)$, by the second part of~\ref{competitive} and Proposition~\ref{prop:retrotone}(i).  Under ~\ref{strongly-competitive}, \eqref{eq:integral} is negative for all $i$ by Proposition~\ref{prop:retrotone}(ii).
\end{proof}

\medskip
Emulating Subsection~2.3 of~\cite{B-M-JDEA}, we define for the Poincar\'{e} map $P$
\begin{equation*}
    \Gamma := \bigcap_{n = 0}^{\infty} \cP^n([0, (1 + \varkappa)\ev]),
\end{equation*}
where $\varkappa > 0$.

There holds
\begin{equation*}
    \Gamma = \bigcap_{n = 0}^{\infty} \cP^n([0, \ev]),
\end{equation*}
and $[0, (1 + \varkappa)\ev]$ attracts bounded sets in $C_{+}$ (see~\cite[Lemmas~2.9 and~2.10]{B-M-JDEA}).  Then we have
\begin{theorem}
\label{thm:global-attractor-exists}
    For the dynamical system $(\cP^n)_{n = 0}^{\infty}$
    \begin{enumerate}
        \item[\textnormal{(i)}]
        $\Gamma$ is the compact attractor of bounded sets in $C_{+}$,
        \item[\textnormal{(ii)}]
        $\Gamma$ is characterized as the set of those $x \in C_{+}$ for which there exists a bounded total $\cP$\nobreakdash-\hspace{0pt}orbit.
    \end{enumerate}
\end{theorem}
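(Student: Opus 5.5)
The plan is to obtain both parts from the general theory of compact attractors in \cite{S-T}, using the two facts quoted from \cite[Lemmas~2.9 and~2.10]{B-M-JDEA}: $\Gamma = \bigcap_{n \ge 0} \cP^n([0,(1+\varkappa)\ev])$, and $\Lambda := [0,(1+\varkappa)\ev]$ attracts bounded sets in $C_{+}$. We also need that $\Lambda$ is forward invariant under $\cP$. I would take this from the same lemmas of \cite{B-M-JDEA}, which rely only on (A1)--(A3) and (\textoverline{A4}), already verified for $\cP$. Alternatively, it can be shown directly by comparison with the axes: by \ref{competitive}, $\Phi_i(t;0,x) \le \phi_i(t;0,x_i)$, and Proposition~\ref{prop:axes-ODE} applies to the latter.

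The first step is to show that $\Gamma$ is nonempty, compact and invariant. The sets $\cP^n(\Lambda)$ are compact because $\cP$ is continuous by Lemma~\ref{lm:homeo}. They are nested because $\Lambda$ is forward invariant. Hence $\Gamma$ is nonempty and compact; it contains $0$. For invariance:
\begin{itemize}
\item $\cP(\Gamma) \subset \bigcap_n \cP^{n+1}(\Lambda) = \Gamma$.
\item For the reverse inclusion, take $y \in \Gamma$ and pick $x_n \in \cP^{n}(\Lambda)$ with $\cP(x_n) = y$. Any limit point $x$ of $(x_n)$ lies in every $\cP^m(\Lambda)$ by nestedness, so $x \in \Gamma$. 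By continuity $\cP(x) = y$. (Injectivity of $\cP$ even makes $x$ unique.)
\end{itemize}

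The second step is attraction, which proves (i). Let $A$ be bounded. Since $\Lambda$ attracts $A$, we have $\cP^n(A) \subset U$ for $n \ge n_1$, where $U$ is a small neighbourhood of $\Lambda$. The clean route is to notice that $\Gamma$ is the $\omega$-limit of $\Lambda$: a standard compactness argument shows $\cP^n(\Lambda) \to \Gamma$ in Hausdorff distance, using Proposition~\ref{prop:Kuratowski-Hausdorff} on the nested compact sets. Attraction of $A$ then follows by splitting $\cP^{n} = \cP^{n-k}\circ\cP^{k}$. Here we use that $\cP^{k}(A)$ lies in a bounded set close to $\Lambda$, together with uniform continuity of $\cP^{m}$ on compacta.

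Handling points that are near $\Lambda$ but outside it is the main obstacle. I would try first to avoid it entirely by invoking \cite[Theorem~2.19]{S-T}: a point-dissipative, compact, continuous map (here $\cP$ maps bounded sets to bounded sets and has a compact attracting set $\Lambda$ for bounded sets) possesses a compact attractor of bounded sets. By uniqueness that attractor must equal $\Gamma$. Indeed, the attractor lies in $\Lambda$ since $\Lambda$ is closed and attracts it, so it lies in $\bigcap_n \cP^n(\Lambda)$ by invariance; conversely $\Gamma$ is compact invariant and hence attracted by the attractor, so it is contained in it.

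Finally, (ii) is Proposition~\ref{prop:attractor-characterization} with $B = C_{+}$. For a direct argument:
\begin{itemize}
\item A point of $\Gamma$ has a backward orbit in $\Gamma$ by invariance, so its total orbit is bounded.
\item Conversely, a bounded total orbit is a bounded invariant set, so its closure is attracted to $\Gamma$ by (i) and invariant, and therefore contained in $\Gamma$.
\end{itemize}
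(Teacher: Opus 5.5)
Your proposal is correct and follows essentially the paper's route: the paper combines the facts from \cite[Lemmas~2.9 and~2.10]{B-M-JDEA} (that $\Gamma=\bigcap_n \cP^n(\Lambda)$ and that $\Lambda$ attracts bounded sets) with the standard attractor theory of \cite{S-T}, with (ii) being Proposition~\ref{prop:attractor-characterization}, and you have written out the details it leaves implicit (forward invariance of $\Lambda$ via comparison with the axes, Hausdorff convergence of the nested sets $\cP^n(\Lambda)$ to $\Gamma$, and the splitting argument). Your splitting argument already disposes of what you call the main obstacle: fix $m$ first, then use uniform continuity of $\cP^m$ on a compact neighbourhood of $\Lambda$ and the attraction of $A$ to $\Lambda$. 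So the detour through \cite[Theorem~2.19]{S-T} is unnecessary.
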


\section{Construction of the global attractor of $\Phi$ via the Poincaré map $P$}
We introduce now another local dynamical system, on this occasion with continuous time, generated by~\eqref{eq:ODE} and denoted by $\mySemi$.

Let $\mytorus$ stand for the interval $[0, T]$ with endpoints identified.  In other words, $a,b \in \RR$ correspond to the same element of $\mytorus$ if and only if $a \equiv b \mod{T}$.  The sum $t + s$ where $t \in \RR$ and $s \in \mytorus$ is considered modulo $T$.  In calculations we sometimes identify $\mytorus$ with $[0, T)$.

The distance between $s_1, s_2 \in \mytorus$ will be denoted by
\begin{equation*}
    d_{\mytorus}(s_1, s_2) := \min \{\abs{s^\dagger_1 - s^\dagger_2}, T - \abs{s^\dagger_1 - s^\dagger_2}\},
\end{equation*}
where $s^{\dagger}_j$, $j = 1, 2$, stands for the (unique) representative of $s_j$ in $[0, T)$.

We define $\mySemi \colon [0, \infty) \times (\mytorus \times C_{+}) \to \mytorus \times C_{+}$ by the formula
\begin{equation*}
    \mySemi(t, (s, x)) := \overline{\Phi}(t, (s^\dagger, x)) = (s^\dagger + t, \Phi(s^\dagger + t; s^\dagger, x))
\end{equation*}
where $s^\dagger \in \RR$ is \textit{any} representative of $s \in \mytorus$. We refer to $\mytorus \times C_+$ as the \emph{extended phase space}. \eqref{eq:process-periodic} guarantees that $\mySemi$ is well-defined.  Further, we have
\begin{itemize}
    \item[(S0)]
        $\mySemi$ is continuous;
    \item[(S1)]
        $\mySemi(0, (s, x)) = (s, x)$ for all $(s,x) \in \mytorus \times C_{+}$;
    \item[(S2)] For all $(s,x)\in \mytorus \times C_+$
        \begin{equation*}
            \mySemi(t_2, \mySemi(t_1, (s,x)) = \mySemi(t_1 + t_2, (s, x)), \quad t_1, t_2 \ge 0.
        \end{equation*}
\end{itemize}
By the remark below~\eqref{eq:process-inverse}, $\mySemi(t, \cdot)$ is injective for all $t \ge 0$.

Observe that by (S2) we have
\begin{equation*}
    \mySemi(n T, (0, x)) = (0, \cP^n(x)), \quad n \in \NN, \ x \in C_{+}.
\end{equation*}

In the following we use a `tilde' over symbols to indicate membership of the extended phase space $\mytorus \times C_+$.

\medskip

For $\tilde{x}=(s_1, x^{(1)}), \ \tilde{y} = (s_2, x^{(2)}) \in \mytorus \times C_{+}$ we introduce the metric $\tilde{d}$  on the extended phase space by
\begin{equation*}
    \tilde{d}(\tilde{x},\tilde{y}) := d_{\mytorus}(s_1, s_2) + \norm{x^{(1)} - x^{(2)}},
\end{equation*}
and for $\tilde{x} \in \mytorus \times C_{+}$ and $\widetilde{A} \subset \mytorus \times C_{+}$ we write
\begin{equation*}
    \tildedist(\tilde{x}, \widetilde{A}) := \inf\{\,\tilde{d}(\tilde{x}, \tilde{y}) :\tilde{y} \in \widetilde{A} \,\}.
\end{equation*}
For $\widetilde{A} \in \mytorus \times C_{+}$ and $s \in \mytorus$ we denote by $\widetilde{A}_s$ the \emph{$s$\nobreakdash-\hspace{0pt}section} of $\widetilde{A}$, $\widetilde{A}_s := \{\, x \in C_{+} : (s, x) \in \widetilde{A} \,\}$,  so that $\widetilde{A}$ can be decomposed
\begin{equation*}
    \widetilde{A} = \bigcup_{s \in \mytorus} \bigl( \{s\} \times \widetilde{A}_s \bigr).
\end{equation*}

For $D \subset C_{+}$ denote by $\widetilde{D}$ the set
\begin{equation*}
    \{\, \mySemi(s, (0, x)): s \in \mytorus,\ x \in D \,\}.
\end{equation*}
Then the $s$\nobreakdash-\hspace{0pt}section $\widetilde{D}_s$ equals $\{\, \Phi(s; 0, x) : x \in D \,\}$.

Observe that if $D \subset C_{+}$ is compact, $\widetilde{D}$, being the image of the compact set $[0, T] \times \{0\} \times D$ under the continuous map $\mathbb{S}$, is compact, too.

\begin{proposition}
\label{prop:sections-continuity}
    Let $D \subset C_{+}$ be compact.  Then the following holds.
    \begin{enumerate}[label=\textup{(\roman*)}, ref=\textup{\ref{prop:sections-continuity}}\textup{(\roman*)}]
        \item\label{prop:sections-continuity-i}
        The assignment
            \begin{equation*}
                [\, \mytorus \ni s \mapsto \widetilde{D}_s \in \mathcal{P}(C_{+}) \,]
            \end{equation*}
        is continuous;
        \item\label{prop:sections-continuity-ii}
        $\bigcup\limits_{s \in \mytorus} \widetilde{D}_s$
        is compact.
    \end{enumerate}
\end{proposition}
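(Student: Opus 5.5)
The plan is to reduce everything to the joint continuity of $\Phi$ on a compact set, and then use the Kuratowski characterization of Hausdorff convergence (Proposition~\ref{prop:Kuratowski-Hausdorff}) to get continuity of sections. By Lemma~\ref{lm:new:complete}, $\Phi(s; 0, x)$ is defined for all $s \ge 0$ and $x \in C_{+}$. By Theorem~\ref{thm:cont-depend}, the map $\Psi \colon [0,T] \times D \to C_{+}$, $\Psi(s,x) := \Phi(s; 0, x)$, is continuous. Since $[0,T]\times D$ is compact, $\Psi$ is uniformly continuous there and its image $K := \Psi([0,T]\times D)$ is compact.

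Part (ii) follows at once from this. Every section $\widetilde{D}_s$ consists of points $\Phi(s^\dagger;0,x)$ with $s^\dagger \in [0,T]$ a representative of $s$ and $x \in D$. Hence $\bigcup_{s} \widetilde{D}_s = K$ is compact. (Equivalently, it is the projection onto $C_{+}$ of the compact set $\widetilde{D}$.) Consequently all sections are nonempty compact subsets of $K$, so the assignment in (i) takes values in $\mathcal{P}(K)$, which is a compact metric space by Proposition~\ref{prop:hyperspace-compact}. This also justifies working with Kuratowski limits.

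For (i), first take $s_m \to s$ with the representatives $s_m^\dagger \to s^\dagger$ all lying in $[0,T]$. By uniform continuity of $\Psi$,
\begin{equation*}
\sup_{x\in D}\norm{\Phi(s_m^\dagger;0,x)-\Phi(s^\dagger;0,x)} \to 0 .
\end{equation*}
This sup bound gives $d_H\bigl(\Psi(s_m^\dagger,\cdot)(D), \Psi(s^\dagger,\cdot)(D)\bigr) \to 0$ directly. Alternatively, (PK1) follows by taking $a^{(m)} = \Phi(s_m^\dagger;0,x)$, and (PK2) follows from compactness of $D$: extract a convergent subsequence of the preimages $x^{(m_k)} \to x \in D$ and apply Lemma~\ref{lm:slightly-insane}.

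The remaining step, and the one I expect to be the main obstacle, is the gluing point $s = 0 \equiv T$ of $\mytorus$. There, a sequence $s_m \to 0$ in $d_{\mytorus}$ may approach via representatives near $0$ or near $T$. The argument will split the sequence into these two subsequences. On each subsequence the previous paragraph gives convergence to $\Psi(0,\cdot)(D) = D$ or to $\Psi(T,\cdot)(D) = \cP(D)$, respectively. The section at the class $0$ must then be reconciled with both one-sided limits, using the fact that $\widetilde{D}$ is the image of $[0,T]\times\{0\}\times D$ (so both endpoints contribute) together with the periodicity \eqref{eq:process-periodic}. I would check carefully at this point which set the section at $0$ equals under the paper's convention, and verify (PK1) and (PK2) for the merged sequence. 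If the two one-sided limits do not coincide with that section, this is exactly where the argument would need an extra identification or hypothesis, and I would first try to resolve it by reading the definition of $\widetilde{D}$ through representatives in $[0,T)$ only.
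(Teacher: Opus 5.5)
Away from the gluing point your argument is correct and is essentially the paper's. The paper checks (PK1) and (PK2) using Theorem~\ref{thm:cont-depend} and compactness of $D$, which is your alternative route, and your uniform-continuity bound on $[0,T]\times D$ gives the Hausdorff convergence even more directly. Part (ii), with $\widetilde{D}$ read as the image of $[0,T]\times\{0\}\times D$, is also fine.

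The trouble is the step you left open at $s = 0 \equiv T$, and your suspicion there is justified. It cannot be closed, because for an arbitrary compact $D$ assertion (i) is false at $s=0$. Your two one-sided limits are $D$ (as $s^\dagger \searrow 0$) and $\Phi(T;0,D) = \cP(D)$ (as $s^\dagger \nearrow T$), and continuity at $0$ would force $\widetilde{D}_0$ to equal both. Take $D = \{\tfrac12\e_1\}$. By Proposition~\ref{prop:axes}, $\cP(\tfrac12\e_1) = \hat{H}_1(\tfrac12)\e_1$ with $\hat{H}_1(\tfrac12) > \tfrac12$, so the one-sided limits are distinct singletons and no choice of $\widetilde{D}_0$ works.

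Reading the definition through representatives in $[0,T)$ does not rescue it. Then $\widetilde{D}_0 = D$ still differs from the left limit $\cP(D)$, and in addition the union in (ii) becomes $\Phi([0,T);0,D)$, which need not be closed. With $[0,T]$ one gets $\widetilde{D}_0 = D\cup\cP(D)$, which matches neither limit.

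The missing ingredient is the hypothesis $\cP(D) = D$. With it, both one-sided limits equal $D$, and $\widetilde{D}_0 = D$ under either convention. Your splitting argument then finishes the proof: every subsequence of $(s_m^\dagger)$ has a further subsequence converging to $0$ or to $T$. Along it, $\widetilde{D}_{s_m} = \Phi(s_m^\dagger;0,D)$ tends to $\Phi(0;0,D) = D$ or to $\Phi(T;0,D) = \cP(D) = D$, so the whole sequence converges to $D = \widetilde{D}_0$.

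This is exactly the situation in which the paper uses the proposition: $D = \Sigma$, both for Lemma~\ref{lm:radial} and for the compactness of $K = \bigcup_{s}\widetilde{\Sigma}_s$. The paper's own proof applies Theorem~\ref{thm:cont-depend} as if $s_m \to s$ meant convergence of real representatives. It therefore silently skips precisely the wrap-around you flagged, and its argument is valid only for $\cP$-invariant $D$.
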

\begin{proof}
    Fix $s_m \to s$ in $\mytorus$.

    Let $(x^{(m_l)})_{l = 1}^{\infty}$ be a subsequence such that $x^{(m_l)} \in \widetilde{D}_{s_{m_l}}$ and $\lim\limits_{l \to \infty} x^{(m_l)} = x$.  There exists a unique sequence $\xi^{(m_l)} \in {D}$ such that $x^{(m_l)} = \Phi(s_{m_l}; 0, \xi^{(m_l)})$.  By compactness, there is a subsequence $(\xi^{(m_{l_p})})_{p =1}^{\infty}$ convergent, as $p \to \infty$, to some $\xi \in {D}$.  Theorem~\ref{thm:cont-depend} implies that $\Phi(s_{m_{l_p}}; 0, \xi^{(m_{l_p})})$ converge, as $p \to \infty$, to $\Phi(s; 0, \xi) = x$.  Therefore $x \in \widetilde{D}_s$, so (PK2) in the definition of Kuratowski convergence is satisfied.

    On the other hand, if $x \in \widetilde{D}_s$ then there is a (unique) $\xi \in {D}$ such that $x = \Phi(s; 0, \xi)$.  Again by Theorem~\ref{thm:cont-depend}, $x$ is the limit of the sequence $\Phi(s_{m_l}; 0 , \xi) \in \widetilde{D}_{s_{m_l}}$, so (PK1) is satisfied.

    Regarding part (ii), observe that the union in~question is the image of the compact set $[0, T] \times \{0\} \times D$ under a continuous map, namely $\mathbb{S}$ composed with the projection on the $C_{+}$\nobreakdash-\hspace{0pt}axis.
\end{proof}

We say that $\widetilde{A} \subset \mytorus \times C_{+}$ is \emph{forward $\mySemi$\nobreakdash-\hspace{0pt}invariant} if $\mySemi(t, \widetilde{A}) \subset \widetilde{A}$ for all $t \ge 0$, and \emph{$\mySemi$\nobreakdash-\hspace{0pt}invariant} if $\mySemi(t, \widetilde{A}) = \widetilde{A}$ for all $t \ge 0$.

For $\tilde{x} \in \mytorus \times C_{+}$, a set $\{\, \tilde{x}^{(\theta)} : \theta \in (- \infty, 0] \,\}$ of elements of $\mytorus \times C_{+}$ is a \emph{backward $\mySemi$\nobreakdash-\hspace{0pt}orbit of $x$} if $\tilde{x}^{(0)} = \tilde{x}$ and $\mySemi(t, \tilde{x}^{(\theta)}) = \tilde{x}^{(\theta + t)}$ for $\theta \le 0$ and $t \ge 0$ with $\theta + t \le 0$.  By the injectivity of $\mySemi(t, \cdot)$, a backward $\mySemi$\nobreakdash-\hspace{0pt}orbit of $\tilde{x}$, if it exists, is unique.  In such a case we write $\mySemi(- t, \tilde{x})$ rather than $\tilde{x}^{(-t)}$, $t \ge 0$.

The \emph{forward $\mySemi$\nobreakdash-\hspace{0pt}orbit of $\tilde{x}$} equals $\{\, \mySemi(t, \tilde{x}) : t \ge 0 \,\}$.  If a backward $\mySemi$\nobreakdash-\hspace{0pt}orbit of $\tilde{x}$ exists, we call the union of the backward and forward $\mySemi$\nobreakdash-\hspace{0pt}orbits the \emph{total $\mySemi$\nobreakdash-\hspace{0pt}orbit of $\tilde{x}$}.  The total $\mySemi$\nobreakdash-\hspace{0pt}orbit, if it exists, is unique.

\begin{lemma}
\label{lm:bar-D-invariant}
    If $D \subset C_{+}$ is forward $\cP$\nobreakdash-\hspace{0pt}invariant, $\widetilde{D}$ is forward $\mySemi$\nobreakdash-\hspace{0pt}invariant.  If $D \subset C_{+}$ is $\cP$\nobreakdash-\hspace{0pt}invariant, $\widetilde{D}$ is $\mySemi$\nobreakdash-\hspace{0pt}invariant.
\end{lemma}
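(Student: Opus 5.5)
The plan is to compute $\mySemi(t, \widetilde{D})$ directly from the definition of $\widetilde{D}$, using the flow property (S2) and the identity $\mySemi(nT, (0, x)) = (0, \cP^n(x))$. By definition, a typical element of $\widetilde{D}$ is $\mySemi(s, (0, x))$ with $s \in [0, T)$ (a representative of an element of $\mytorus$) and $x \in D$.

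First I show forward invariance. Fix $t \ge 0$ and such an element. By (S2), $\mySemi(t, \mySemi(s,(0,x))) = \mySemi(s + t, (0, x))$. Write $s + t = nT + r$ with $n \in \NN$ and $r \in [0, T)$. Applying (S2) again gives
\begin{equation*}
\mySemi(s+t,(0,x)) = \mySemi(r, \mySemi(nT,(0,x))) = \mySemi(r,(0,\cP^n(x))).
\end{equation*}
Since $D$ is forward $\cP$\nobreakdash-invariant, $\cP^n(x) \in D$. Hence the point lies in $\widetilde{D}$, and so $\mySemi(t, \widetilde{D}) \subset \widetilde{D}$.

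Now assume $\cP(D) = D$; I need the reverse inclusion $\widetilde{D} \subset \mySemi(t, \widetilde{D})$. Take $\mySemi(r, (0, y))$ with $r \in [0,T)$ and $y \in D$. Choose $n \in \NN$ with $nT + r \ge t$. By invariance, $\cP^n(D) = D$, so there is $x \in D$ with $\cP^n(x) = y$. Set $\sigma := nT + r - t \ge 0$, and write $\sigma = kT + s$ with $k \in \NN$ and $s \in [0,T)$. The point $\mySemi(s, (0, \cP^k(x)))$ lies in $\widetilde{D}$, because $\cP^k(x) \in D$ by forward invariance. Using (S2) twice,
\begin{align*}
\mySemi\bigl(t, \mySemi(s,(0,\cP^k(x)))\bigr) &= \mySemi\bigl(t + s + kT, (0,x)\bigr) = \mySemi\bigl(nT + r, (0,x)\bigr) \\
&= \mySemi\bigl(r, (0,\cP^n(x))\bigr) = \mySemi\bigl(r,(0,y)\bigr).
\end{align*}
Here I used $\mySemi(s, (0,\cP^k(x))) = \mySemi(s + kT, (0, x))$, which follows from (S2) together with $\mySemi(kT,(0,x)) = (0,\cP^k(x))$. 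This gives the reverse inclusion, and therefore $\mySemi(t,\widetilde{D}) = \widetilde{D}$.

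There is no real obstacle. The only care needed is to reduce times modulo $T$ correctly. This reduction is legitimate because $\mySemi$ is well defined on $\mytorus$, by \eqref{eq:process-periodic}, and because of the identification $\mySemi(nT,(0,x)) = (0,\cP^n(x))$.
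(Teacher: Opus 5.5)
Your proof is correct and follows the paper's argument. Forward invariance comes from (S2) together with $\mySemi(nT,(0,x)) = (0,\cP^n(x))$, and the reverse inclusion comes from pulling $y$ back through $\cP^n$ on $D$. The only cosmetic difference is that the paper takes $n := -\lfloor (s-t)/T \rfloor$, so that $s - t + nT$ already lies in $[0,T)$; with that choice your second reduction $\sigma = kT + s$ gives $k=0$ and becomes unnecessary, though it is harmless.
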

\begin{proof}
    Let $t \ge 0$ and $\mySemi(s, (0, x)) \in \widetilde{D}$ with $s \in \mytorus$, $x \in D$.   Then $\mySemi(t, \mySemi(s, (0, x))) = \mySemi(t + s, (0, x)) = \mySemi(t + s - m T, \mySemi(mT, (0, x))$ with $m := \lfloor \frac{t + s}{T} \rfloor$.  But $t + s - m T \in \mytorus$ and $\mySemi(mT, (0, x)) = (0, \cP^m(x))$, with $\cP^m(x) \in D$.  This proves that $\mySemi(t, \widetilde{D}) \subset \widetilde{D}$ for any $t \ge 0$

    Assume $D$ to be $\cP$\nobreakdash-\hspace{0pt}invariant. Putting $n := - \lfloor \frac{s - t}{T} \rfloor$ and $y := (\cP^n\hspace{-.375em}\restriction_{D})^{-1}(x) \in D$ we obtain $\mySemi(t, \mySemi(s - t + nT, (0, y))) = \mySemi(s, \mySemi(nT,(0, y))) = \mySemi(s, (0, x))$ with $s - t + nT \in \mytorus$.
\end{proof}
In particular, for an $\cP$\nobreakdash-\hspace{0pt}invariant $D$ there holds
\begin{equation}
\label{eq:aux3}
    \Phi(t; 0, D) = \Phi(t - \lfloor \tfrac{t}{T} \rfloor T; 0, D), \quad t \ge 0.
\end{equation}

\smallskip
Let $\mathcal{\widetilde{A}}$ be a class of subsets of $\mytorus \times C_{+}$.  We say that a nonempty compact $\mySemi$\nobreakdash-\hspace{0pt}invariant $\widetilde{D} \subset \mytorus \times C_{+}$ is a \emph{compact attractor for the dynamical system $\mySemi$, of the class $\mathcal{\widetilde{A}}$} if for any $\widetilde{A} \in \mathcal{\widetilde{A}}$ and any $\epsilon > 0$ there exists $t_0 = t_0(\widetilde{A}, \epsilon)$ such that $\tildedist(\mySemi(t, \tilde{x}), \widetilde{D}) < \epsilon$ for all $\tilde{x} \in \widetilde{A}$ and all $t \ge t_0$.

\begin{lemma}
\label{lm:attractor}
    Let $D \subset C_{+}$ be a compact attractor  for the dynamical system $(\cP^n)_{n = 0}^{\infty}$ of a class $\mathcal{A}$ of subsets of $C_{+}$.  Then for any $A \in \mathcal{A}$ and any $\epsilon > 0$ there exists $t_0 = t_0(A, \epsilon)$ such that for all $x \in A$ and all $t \ge t_0$ there holds $\dist(\Phi(t; 0, x), \Phi(t; 0, D)) < \epsilon$.
\end{lemma}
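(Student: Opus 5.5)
Write $t = nT + s$ with $n = \lfloor t/T \rfloor$ and $s \in [0, T)$. By \eqref{eq:process} and \eqref{eq:process-periodic}, $\Phi(t; 0, x) = \Phi(s; 0, \cP^n(x))$. Since $D$ is compact and $\cP$-invariant, \eqref{eq:aux3} gives $\Phi(t; 0, D) = \Phi(s; 0, D)$. The claim therefore reduces to the following: given $\epsilon > 0$, find $n_0$ such that $\dist(\Phi(s; 0, \cP^n(x)), \Phi(s; 0, D)) < \epsilon$ for all $x \in A$, all $n \ge n_0$ and all $s \in [0, T]$. Then $t_0 := (n_0 + 1) T$ works.

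The key ingredient is a uniform continuity estimate. Fix a compact neighbourhood $N := \{\, y \in C_{+} : \dist(y, D) \le 1 \,\}$ of $D$; it is compact because $D$ is. By Lemma~\ref{lm:new:complete} and Theorem~\ref{thm:cont-depend}, the map $(s, y) \mapsto \Phi(s; 0, y)$ is continuous on $[0, T] \times C_{+}$, hence uniformly continuous on the compact set $[0, T] \times N$. So there is $\eta \in (0, 1]$ such that $\norm{\Phi(s; 0, y) - \Phi(s; 0, z)} < \epsilon$ whenever $s \in [0, T]$, $y, z \in N$ and $\norm{y - z} < \eta$.

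Now apply the attractor property of $D$ for $(\cP^n)$ with this $\eta$. It gives $n_0 = n_0(A, \eta)$ such that $\dist(\cP^n(x), D) < \eta$ for all $x \in A$ and $n \ge n_0$. For such $x$ and $n$, compactness of $D$ yields $z \in D$ with $\norm{\cP^n(x) - z} < \eta$. In particular $\cP^n(x) \in N$, so
\[
\dist\bigl(\Phi(s; 0, \cP^n(x)), \Phi(s; 0, D)\bigr) \le \norm{\Phi(s; 0, \cP^n(x)) - \Phi(s; 0, z)} < \epsilon
\]
uniformly in $s \in [0, T]$. Combined with the reduction above, this proves the lemma.

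The only delicate point is uniformity in $s$ over the whole period, since $g$ is merely Carathéodory in time. This is handled by joint continuity of $\Phi$ in $(t, x)$ (Theorem~\ref{thm:cont-depend}) together with the compactness of $[0, T] \times N$; no time-regularity of $g$ is needed. A minor bookkeeping issue is the endpoint $s = T$ versus $[0, T)$, but both are covered because uniform continuity was taken on the closed interval $[0, T]$.
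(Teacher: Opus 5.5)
Your proof is correct and takes essentially the same route as the paper. Both write $t = nT + s$, use the attraction of $\cP^n(x)$ to $D$ together with a continuity estimate that is uniform in $s \in [0,T]$, and then invoke \eqref{eq:aux3} to identify $\Phi(t;0,D)$ with $\Phi(s;0,D)$. The only difference is how the uniform estimate is obtained: the paper takes pointwise radii $\delta_y$, a finite subcover of $D$ and a Lebesgue-number-type $\tilde{\delta}$, while you use uniform continuity of $\Phi(\cdot;0,\cdot)$ on the compact set $[0,T]\times N$, which is slightly cleaner. (Also, $t_0 = n_0 T$ would already suffice.)
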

\begin{proof}
    Fix $\epsilon > 0$.  By the continuous dependence of solutions on the initial values and the compactness of $D \times [0, T]$, for each $y \in D$ there is $\delta_y > 0$ such that if $\xi \in C_{+}$, $\norm{\xi - y} < \delta_y$ then $\norm{\Phi(s; 0, \xi) - \Phi(s; 0, y)} < \epsilon$ for all $s \in [0, T]$.  Since $D$ is compact, we can extract a finite subcover $\{\, B(y_1; \delta_{y_1}), \ldots, B(y_m; \delta_{y_m}) \,\}$ of the open cover $\{\, B(y; \delta_{y}) : y \in D \,\}$.  Let $\tilde{\delta} > 0$ be so small that
    \begin{equation*}
        \{\, \eta \in C_{+} : \dist(\eta, D) < \tilde{\delta} \,\} \subset \bigcup_{k = 1}^{m} B(y_k; \delta_{y_k}).
    \end{equation*}
    As $D$ is, for $(\cP^n)_{n = 0}^{\infty}$, an attractor of sets in $\mathcal{A}$, for any $A \in \mathcal{A}$ there is $n_0 \in \NN$ such that $\dist(\Phi(n T; 0, x), D) = \dist(\cP^n(x), D) < \tilde{\delta}$ for all $n \ge n_0$ and all $x \in A$.

    Fix $A \in \mathcal{A}$, and put $t_0 := n_0 T$.  Any $t \ge t_0$ can be written as $t = n T + \theta$ with $n \ge n_0$ and $\theta \in [0, T)$.  For any $x \in A$ we have $\Phi(n T; 0, x) \in B(y_k; \delta_{y_k})$ for some $1 \le k \le m$.  By~\eqref{eq:process} and~\eqref{eq:process-periodic}, $\Phi(n T + \theta; 0, x) = \Phi(n T + \theta; n T, \Phi(n T; 0, x)) = \Phi(\theta; 0, \Phi(n T; 0, x))$, so $\dist(\Phi(t; 0, x), \widetilde{D}_{\theta}) \le \norm{\Phi(t; 0, x) - \Phi(\theta; 0, y_k)} < \epsilon$.  By~\eqref{eq:aux3}, $\widetilde{D}_{\theta} = \Phi(t; 0, D)$.
\end{proof}
As, for any $s \in \mytorus$ and any $\xi \in C_{+}$, $\tildedist((s, x), \widetilde{D}) \le \dist(\xi, \widetilde{D}_s)$ we have the following.
\begin{corollary}
\label{cor:attractor}
    Let $D \subset C_{+}$ be a compact attractor for the dynamical system $(\cP^n)_{n = 0}^{\infty}$ of a class $\mathcal{A}$ of subsets of $C_{+}$.  Then $\widetilde{D}$ is a compact attractor for $\mySemi$ of the class $\{\, \{0\} \times A : A \in \mathcal{A} \,\}$.
\end{corollary}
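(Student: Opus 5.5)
The plan is to verify the three ingredients in the definition of a compact attractor for $\mySemi$: that $\widetilde{D}$ is nonempty and compact, that it is $\mySemi$\nobreakdash-invariant, and that it attracts every set $\{0\} \times A$ with $A \in \mathcal{A}$.

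\textbf{Nonemptiness, compactness, invariance.} Since $D$ is a compact attractor for $(\cP^n)_{n = 0}^{\infty}$, it is by definition nonempty, compact and $\cP$\nobreakdash-invariant. Then $\widetilde{D}$ is nonempty, since it contains $\{0\} \times D$. It is compact, because it is the image of the compact set $[0, T] \times \{0\} \times D$ under the continuous map $\mySemi$ (see (S0) and the remark preceding Proposition~\ref{prop:sections-continuity}). Its $\mySemi$\nobreakdash-invariance follows directly from the second part of Lemma~\ref{lm:bar-D-invariant}.

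\textbf{Attraction.} Fix $A \in \mathcal{A}$ and $\epsilon > 0$, and let $t_0 = t_0(A, \epsilon)$ be the time supplied by Lemma~\ref{lm:attractor}. Take $x \in A$ and $t \ge t_0$, and put $s := t - \lfloor t/T \rfloor T$, which represents $t$ in $\mytorus$.
\begin{itemize}
\item By the definition of $\mySemi$, we have $\mySemi(t, (0, x)) = (s, \Phi(t; 0, x))$.
\item By~\eqref{eq:aux3}, applied to the $\cP$\nobreakdash-invariant set $D$, we have $\Phi(t; 0, D) = \Phi(s; 0, D) = \widetilde{D}_s$.
\item For any $y \in \widetilde{D}_s$ the point $(s, y)$ lies in $\widetilde{D}$, and $\tilde{d}((s, \xi), (s, y)) = \norm{\xi - y}$. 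Hence $\tildedist((s, \xi), \widetilde{D}) \le \dist(\xi, \widetilde{D}_s)$ for every $\xi \in C_{+}$.
\end{itemize}
Combining these with Lemma~\ref{lm:attractor} gives
\begin{equation*}
\tildedist(\mySemi(t, (0, x)), \widetilde{D}) \le \dist(\Phi(t; 0, x), \Phi(t; 0, D)) < \epsilon ,
\end{equation*}
uniformly in $x \in A$ and $t \ge t_0$. This is exactly the attraction property required.

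\textbf{Main obstacle.} The only delicate point is the bookkeeping of the section index. One must make sure that the set $\Phi(t; 0, D)$ in Lemma~\ref{lm:attractor} is the section of $\widetilde{D}$ at the same $\mytorus$\nobreakdash-coordinate as $\mySemi(t, (0, x))$, namely $s \equiv t \pmod T$. This is precisely what~\eqref{eq:aux3}, together with $\cP$\nobreakdash-invariance of $D$, guarantees. All the analytic work, namely uniform continuity over one period on a neighbourhood of $D$, has already been done in Lemma~\ref{lm:attractor}.
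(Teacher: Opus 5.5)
Your proposal is correct and follows essentially the same route as the paper, which derives the corollary directly from Lemma~\ref{lm:attractor} (with \eqref{eq:aux3} identifying $\Phi(t;0,D)$ with the section $\widetilde{D}_s$) and the inequality $\tildedist((s,\xi),\widetilde{D}) \le \dist(\xi,\widetilde{D}_s)$. The only addition is that you spell out the nonemptiness, compactness and $\mySemi$-invariance of $\widetilde{D}$ (via Lemma~\ref{lm:bar-D-invariant}), which the paper leaves to the surrounding remarks.
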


\begin{theorem}
\label{thm:attractor-ODE}
    \begin{enumerate}[label=\textup{(\roman*)}, ref=\textup{\ref{thm:attractor-ODE}}\textup{(\roman*)}]
        \item
        \label{thm:attractor-ODE-i}
        For any bounded $A \subset C_{+}$ and any $\epsilon > 0$ there exists $n_0 \in \NN$ such that $\dist(\Phi(t; 0, x), \Phi(t; 0, \Gamma)) < \epsilon$ for all $x \in A$ and all $t > n_0 T$.
        \item
        \label{thm:attractor-ODE-ii}
        $\Gamma$ is characterized as the set of those $x \in C_{+}$ for which the solution of~\eqref{eq:ODE} passing through $x$ at $t = 0$ is defined and bounded on $(-\infty, \infty)$.
    \end{enumerate}
\end{theorem}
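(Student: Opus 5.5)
Part~(i) follows directly from Lemma~\ref{lm:attractor}. By Theorem~\ref{thm:global-attractor-exists}(i), $\Gamma$ is the compact attractor of bounded sets for $(\cP^n)_{n=0}^{\infty}$. We therefore take $\mathcal{A}$ to be the class of bounded subsets of $C_{+}$ and $D := \Gamma$. Lemma~\ref{lm:attractor} then gives, for bounded $A$ and $\epsilon>0$, an $n_0$ with $t_0 = n_0 T$ such that $\dist(\Phi(t;0,x),\Phi(t;0,\Gamma))<\epsilon$ for all $x\in A$ and all $t\ge n_0T$. This is exactly statement~(i); the strict inequality $t>n_0T$ is harmless.

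For part~(ii) we reduce to Theorem~\ref{thm:global-attractor-exists}(ii), which characterizes $\Gamma$ as the set of points with a bounded total $\cP$\nobreakdash-orbit. Two implications are needed.

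\emph{Solutions to orbits.} Suppose the solution $\phi(t)=\Phi(t;0,x)$ exists on all of $\RR$ and is bounded. Put $x_{-n}:=\phi(-nT)$. By~\eqref{eq:process} and~\eqref{eq:process-periodic},
\begin{equation*}
    \cP(x_{-n-1})=\Phi(T;0,\phi(-(n+1)T))=\Phi(-nT;-(n+1)T,\phi(-(n+1)T))=x_{-n}.
\end{equation*}
Hence the points $x_{-n}$ form a backward $\cP$\nobreakdash-orbit. Together with $\cP^n(x)=\phi(nT)$, this gives a total orbit contained in the bounded range of $\phi$, so $x\in\Gamma$.

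\emph{Orbits to solutions.} Conversely, let $x\in\Gamma$. Since $\Gamma$ is $\cP$\nobreakdash-invariant and $\cP$ is injective, $x$ has a unique backward orbit $(x_{-n})\subset\Gamma$. We define $\phi(t):=\Phi(t;-nT,x_{-n})$ for $t\ge -nT$. By Lemma~\ref{lm:new:complete} each of these functions is defined forward in time. They are consistent because $\Phi(-(n-1)T;-nT,x_{-n})=\cP(x_{-n})=x_{-(n-1)}$, using periodicity and (P2). So $\phi$ is a solution on $\RR$, and by uniqueness it coincides with $\Phi(\cdot;0,x)$ on its maximal domain; thus that domain is all of $\RR$.

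For boundedness we write $t=-nT+\theta$ with $\theta\in[0,T)$. Then $\phi(t)=\Phi(\theta;0,x_{-n})\in\bigcup_{s\in\mytorus}\widetilde{\Gamma}_s$. This set is compact by Proposition~\ref{prop:sections-continuity-ii} applied with $D=\Gamma$. Hence $\phi$ is bounded, which finishes~(ii).

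The main obstacle is the gluing in the second implication. One has to verify that the pieces $\Phi(\cdot;-nT,x_{-n})$ really fit together into a single solution, and that the noncontinuable solution through $(0,x)$ has $\taumin=-\infty$. This rests on the interpretation of (P2) and~\eqref{eq:process-periodic} together with the uniqueness part of Theorem~\ref{thm:exist}, and I would write that step out carefully.
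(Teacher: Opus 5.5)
Your proposal is correct and follows the paper's proof. Part (i) comes from Theorem~\ref{thm:global-attractor-exists}(i) together with Lemma~\ref{lm:attractor}, and part (ii) reduces to the bounded-total-$\cP$-orbit characterization in Theorem~\ref{thm:global-attractor-exists}(ii). Your explicit gluing of the pieces $\Phi(\cdot;-nT,x_{-n})$ and your appeal to Proposition~\ref{prop:sections-continuity-ii} simply spell out what the paper compresses into ``$\ZZ \subset (\taumin(0,x),\taumax(0,x))$'' and the compactness of $\{\,\Phi(t;0,\Gamma) : t\in[0,T]\,\}$.
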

\begin{proof}
    (i) is a consequence of Theorem~\ref{thm:global-attractor-exists}(i) and Lemma~\ref{lm:attractor}.

    Assume $x \in \Gamma$.  By Theorem~\ref{thm:global-attractor-exists}(ii), $\cP^n(x)$ exist for all $n \in \ZZ$, so we have $\ZZ \subset (\taumin(0, x), \taumax(0, x))$, consequently $(\taumin(0, x), \taumax(0, x)) = (-\infty, \infty)$.  Since $\Gamma$ is $\cP$\nobreakdash-\hspace{0pt}invariant, there holds $\{\, \Phi(t; 0, \Gamma) : t \in (-\infty, \infty) \,\} = \{\, \Phi(t; 0, \Gamma) : t \in [0, T] \,\}$.  The latter is compact, so the solution passing through $x$ at $t = 0$ is bounded.

    If $x \in C_{+}$ is such that the solution of~\eqref{eq:ODE} passing through it at $t = 0$ is defined and bounded on $(-\infty, \infty)$, then $\cP^n(x)$ exist for all $n \in \ZZ$ and the set $\{\, \cP^n(x) : n \in \ZZ \,\}$ is bounded, so, by Theorem~\ref{thm:global-attractor-exists}(ii), $x \in \Gamma$.
\end{proof}

Lemma~\ref{lm:bar-D-invariant} applied to $\Gamma$ allows us to extend $\mySemi$ to a flow on $\widetilde{\Gamma}$: for $t < 0$ and $(s, x) \in \widetilde{\Gamma}$ we define $\mySemi(t, (s, x))$ to be equal to $(s', y)$ such that $\mySemi(-t, (s', y))$.  (S2) is satisfied for any $t_1, t_2 \in \RR$ and any $(s, x) \in \widetilde{\Gamma}$:
\begin{equation}
\label{eq:flow-on-Gamma}
    \mySemi(t_2, \mySemi(t_1, (s,x)) = \mySemi(t_1 + t_2, (s, x)), \quad t_1, t_2 \in \RR, \ x \in \widetilde{\Gamma}.
\end{equation}
Similarly, the process $\Phi$ restricted to $\Gamma$ is defined for any two $s, t \in \RR$, and satisfies
\begin{equation}
\label{eq:process-on-Gamma}
     \Phi(t_1 + t_2; t_1, \Phi(t_1; 0, x))) = \Phi(t_1 + t_2; 0, x), \quad t_1, t_2 \in \RR, \ x \in \Gamma.
\end{equation}
\begin{theorem}~\
\label{thm:bar-S-attractor}
       $\widetilde{\Gamma}$ is the compact attractor, for the dynamical system $\mySemi$, of bounded sets in $\mytorus \times C_{+}$.
\end{theorem}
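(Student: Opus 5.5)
Three things have to be shown: $\widetilde{\Gamma}$ is nonempty and compact, it is $\mySemi$-invariant, and it attracts every bounded $\widetilde{A} \subset \mytorus \times C_{+}$. Compactness was already observed: $\widetilde{\Gamma}$ is the image of $[0,T]\times\{0\}\times\Gamma$ under $\mySemi$. Invariance is Lemma~\ref{lm:bar-D-invariant}, since $\Gamma$ is $\cP$-invariant by Theorem~\ref{thm:global-attractor-exists}(i). The substance is therefore the attraction property, and the plan is to reduce it to Theorem~\ref{thm:attractor-ODE-i}, which handles initial sections at time $0$ only.

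Fix a bounded $\widetilde{A}$ and $\epsilon>0$. Each $\tilde{x}=(s,x)\in\widetilde{A}$ is represented with $s\in[0,T)$. The first step moves everything to the section $s=0$: put $A' := \{\,\Phi(T;s,x) : (s,x)\in\widetilde{A}\,\}$. Then $\mySemi(T-s,(s,x)) = (0,\Phi(T;s,x))$, so $(0,\Phi(T;s,x))$ lies on the forward $\mySemi$-orbit of $(s,x)$.

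The key claim is that $A'$ is bounded. It is the image of the set $\{\,(s,x)\,\} \subset [0,T]\times\overline{B}$, with $\overline{B}$ a closed ball containing the $x$-components of $\widetilde{A}$. Here I use Lemma~\ref{lm:new:complete}: for every $s\in[0,T]$ and $x\in C_{+}$, the solution $\Phi(\cdot;s,x)$ exists on $[s,T]$, so $(T,s,x)\in D(\Phi)$. Theorem~\ref{thm:cont-depend} then makes $(s,x)\mapsto\Phi(T;s,x)$ continuous on the compact set $[0,T]\times\overline{B}$, hence bounded there. This continuity-plus-compactness step is the only point needing care, since $s$ ranges over a closed interval and the domain of $\Phi$ is only known to be open. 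Global forward existence removes that difficulty.

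Next apply Theorem~\ref{thm:attractor-ODE-i} to $A'$. It gives $n_0$ with
\begin{equation*}
\dist\bigl(\Phi(t;0,y), \Phi(t;0,\Gamma)\bigr) < \epsilon \quad \text{for all } y\in A',\ t>n_0T.
\end{equation*}
By \eqref{eq:aux3}, $\Phi(t;0,\Gamma) = \widetilde{\Gamma}_{t \bmod T}$.

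Now take $t \ge t_0 := (n_0+2)T$. Using (S2),
\begin{equation*}
\mySemi(t,(s,x)) = \mySemi\bigl(t-(T-s),(0,y)\bigr) = \bigl(t+s,\ \Phi(t-(T-s);0,y)\bigr), \qquad y=\Phi(T;s,x)\in A'.
\end{equation*}
Here $t-(T-s) > n_0T$, and $t+s \equiv t-(T-s) \pmod{T}$. Hence
\begin{equation*}
\tildedist\bigl(\mySemi(t,\tilde{x}),\widetilde{\Gamma}\bigr) \le \dist\bigl(\Phi(t-T+s;0,y),\widetilde{\Gamma}_{t+s}\bigr) < \epsilon,
\end{equation*}
uniformly in $\tilde{x}\in\widetilde{A}$. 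This is exactly the definition of a compact attractor of bounded sets, which completes the proof.
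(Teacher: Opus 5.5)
Your proof is correct and follows essentially the same route as the paper: push $\widetilde{A}$ to the section $s=0$ via $\Phi(T;s,\cdot)$, note that the transported set is bounded, and invoke attraction at section $0$. The paper cites Corollary~\ref{cor:attractor} where you cite Theorem~\ref{thm:attractor-ODE-i}, which amounts to the same thing. You also justify two points the paper leaves implicit: the boundedness of the transported set (via global forward existence and continuity of $(s,x)\mapsto\Phi(T;s,x)$ on $[0,T]\times\overline{B}$), and the compactness and $\mySemi$-invariance of $\widetilde{\Gamma}$.
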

\begin{proof}
    Let $\widetilde{A} \subset \mytorus \times C_{+}$ be bounded.  Put $B := \bigcup\limits_{s \in [0, 1)} \Phi(T; s, \widetilde{A}_s)$.  The set $B$ is bounded, moreover for any $(s, x) \in \widetilde{A}$ there holds $\mySemi(T - s, (s, x)) = (0, \Phi(T; s, x))$ with $\Phi(T; s, x) \in B$.  By Corollary~\ref{cor:attractor}, for any $\epsilon > 0$ there is $t_0$ such that $\tildedist(\mySemi(t, (0,y)), \widetilde{\Gamma}) < \epsilon$ for all $y \in B$ and all $t \ge t_0$.  By the definition of $B$, for any $(s, x) \in \widetilde{A}$ there holds $\mySemi(T - s, (s, x)) = (0, \Phi(T; s, x))$ with $\Phi(T; s, x) \in B$.  For any $t > T$ we have
    \begin{equation*}
        \mySemi(t - s, (s, x)) = \mySemi(t - T, \mySemi(T - s, (s, x))) = \mySemi(t - T, (0, \Phi(T; s, x))).
    \end{equation*}
    Consequently, for all $t \ge t_0 + T$ and all $(s, x) \in \widetilde{A}$ we have $\tildedist(\mySemi(t, (s,x)), \widetilde{\Gamma}) < \epsilon$.
\end{proof}

\section{The time-periodic carrying simplex}
\label{sect:time-periodic-CS}
We pause a~little to formulate main results for carrying simplices of retrotone maps in~\cite{B-M-JDEA}.  It was proved there that under assumptions (A1)--(A3) and (\textoverline{A4}) [resp.\ under assumptions (A1)--(A2), (A3)\textprime\ and (\textoverline{A4}\textprime)] there exists, for the Kolmogorov map $F \colon C_{+} \to C_{+}$, a set $\Sigma \subset C_{+}$ with the following properties:
\begin{enumerate}[label=\textup{(CS\arabic*)},ref=\textup{(CS\arabic*)},leftmargin=50pt]
\item
\label{P-unordered}
    $\Sigma$ is an unordered \textup{[}resp.\ weakly unordered\textup{]} subset of $\Gamma$.
\item
\label{P-radial-proj}
    $\Sigma$ is homeomorphic via radial projection to the $(d-1)$\nobreakdash-\hspace{0pt}dimensional standard probability simplex $\Delta$.
\item
\label{P-invariant}
    $F(\Sigma) = \Sigma$ and $F{\restriction}_{\Sigma} \colon \Sigma \to \Sigma$ is a homeomorphism.
\item
\label{P-attraction}
    $\Sigma$ is the compact attractor, for $(F^n)_{n = 0}^{\infty}$, of bounded $A \subset C_{+}$ with $0 \notin \bar{A}$.
\item
\label{P-asymptotic}
    For any $x \in C_{+} \setminus \{0\}$ there is $y \in \Sigma$ such that $\displaystyle \lim\limits_{n\to +\infty} \norm{F^n(x) - F^n(y)} = 0$ \textup{(}this property is called \emph{asymptotic completeness} or \emph{asymptotic phase}\textup{)} \textup{[}resp.\ for any $x \in \Gamma \setminus \{0\}$ there is $y \in \Sigma$ such that $\displaystyle \lim\limits_{n\to +\infty} \norm{F^n(x) - F^n(y)} = 0$\textup{]}.
    \item
    \label{P-order-convex}
    $\Sigma$ is the boundary \textup{(}relative to $C_{+}$\textup{)} of $\Gamma$ and $\Gamma$ is order convex.  In~particular, $\Gamma = \{\alpha x :  \alpha \in [0, 1], \ x \in \Sigma\}$.
    \item
    \label{P-attractor}
    $\Gamma \setminus \Sigma = \{\alpha x :  \alpha\in [0, 1), x \in \Sigma\}$ is characterised as the set of all those $x \in C_{+}$ that have a backward $F$\nobreakdash-\hspace{0pt}orbit $\{\ldots, x^{(-2)}, x^{(-1)}, x\}$ with $\lim\limits_{n \to \infty} x^{(-n)} = 0$.
    \item
    \label{P-characterization}
    $\Sigma$ is characterised as the set of all $x \in C_{+}$ having total $F$\nobreakdash-\hspace{0pt}orbits that are bounded and bounded away from $0$.
    \item
    \label{P-Lipschitz}
    The inverse $(\Pi{\restriction}_{\Sigma})^{-1}$ of the orthogonal projection of $\Sigma$ along $\ev$ is Lipschitz continuous.
\end{enumerate}

\begin{definition}[Weak carrying simplex/carrying simplex]
A set $\Sigma$ satisfying \ref{P-unordered}--\ref{P-asymptotic} is referred to as the \emph{weak carrying simplex} for $F$.  Under \textup{(A3)\textprime}\ and \textup{(\textoverline{A4}\textprime)}, $\Sigma$, satisfying the strengthened versions of~\ref{P-unordered} and~\ref{P-unordered}, is called the \emph{carrying simplex} for $F$.
\end{definition}

In~\cite{B-M-JDEA} a quite novel construction of the (weak) carrying simplex is given, as the limit of the iterates under $F$ of a homothetic image of the probability  simplex $\Delta$.  To be more specific, we have
\begin{theorem}
\label{thm:construction-0}
    Assume \textup{(A1)--(A3)} and \textup{(\textoverline{A4})}.  Then there exists a weak carrying simplex \textup{(}under \textup{(A3)}\textprime\ and \textup{(\textoverline{A4}\textprime)} a carrying simplex\textup{)} $\Sigma$ for $F$, and the following holds:
    \begin{enumerate}
        \item[\textup{(i)}]
        There exists $\varepsilon > 0$ such that $F^n(\varepsilon \Delta) \subset \Lambda$ for all $n \in \NN$, and
        \begin{equation*}
            \Sigma = \lim\limits_{n \to \infty} F^n(\varepsilon \Delta),
        \end{equation*}
        \item[\textup{(ii)}]
        $F^n(H) \subset \Lambda$ for all $n \in \NN$, and
        \begin{equation*}
            \Sigma = \lim\limits_{n \to \infty} F^n(H),
        \end{equation*}
        where
        \begin{equation*}
            H := \{\, x \in C_{+} : \forall i \in \{1, \ldots, d\} \ x_i \le 1 + \varkappa, \ \exists j \in \{1, \ldots, d\} \ x_j = 1 + \varkappa \,\}
        \end{equation*}
        with $\varkappa > 0$ as in \textup{(A3)} \textup{(}or as in \textup{(A3\textprime))}.
    \end{enumerate}
\end{theorem}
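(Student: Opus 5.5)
The statement is the main construction theorem of~\cite{B-M-JDEA}. In the present paper I would prove it by appeal to that paper, so the plan below recalls the argument there and indicates how each step uses (A1)--(A3) and (\textoverline{A4}). Throughout I work with radial coordinates: every $x \in C_{+} \setminus \{0\}$ is written uniquely as $x = r u$ with $u \in \Delta$ and $r > 0$. The idea is to sandwich $\Sigma$ between the two sequences $F^n(\varepsilon\Delta)$, rising from below, and $F^n(H)$, descending from above.

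First I would choose $\varepsilon$. Applying (\textoverline{A4}-b) with $x = 0$ and $y = \ev_i$, and using $f_i(\ev_i) = 1$ from (A2), gives $f_i(0) > 1$ for all $i$. By the continuity in (A1), $f \gg \ev$ on $\varepsilon\Delta$ for small $\varepsilon > 0$, so $F$ moves $\varepsilon\Delta$ strictly outward along rays. On the other side, $H$ is mapped inward: for $x \in H$ with $x_j = 1+\varkappa$, comparison with $(1+\varkappa)\ev_j$, together with the retrotonicity in (A3-b) and $\hat{x}_j = 1$, gives $F_j(x) < 1+\varkappa$. Hence $F(\Lambda) \subset \Lambda$, and both families $F^n(\varepsilon\Delta)$ and $F^n(H)$ stay in $\Lambda$.

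Next I would show by induction that each $F^n(\varepsilon\Delta)$ and each $F^n(H)$ is the radial graph of a continuous function $\rho_n^{-}$, respectively $\rho_n^{+}$, on $\Delta$. The sets are weakly unordered: two distinct points cannot be order-related, since otherwise (A3-b) would pull the relation back to $\varepsilon\Delta$ or $H$, where no such pairs exist. A compact, weakly unordered set that is homeomorphic to $\Delta$ and contained in $\Lambda$ meets each ray exactly once; here (A3-a) supplies the invariance-of-domain argument showing that radial projection is onto. Retrotonicity then gives monotonicity in $n$: $\rho_n^{-}$ is nondecreasing and $\rho_n^{+}$ is nonincreasing, with $\rho_n^{-} \le \rho_m^{+}$ for all $n, m$. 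This holds because $F(\varepsilon\Delta)$ lies radially above $\varepsilon\Delta$, $F(H)$ lies radially below $H$, and (A3-b) forbids crossings under iteration. Consequently the pointwise limits $\rho^{-} \le \rho^{+}$ exist.

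The main obstacle is proving $\rho^{-} = \rho^{+}$ together with Hausdorff convergence. My first attempt would argue by contradiction. Suppose $\rho^{-}(u) < \rho^{+}(u)$ for some $u$, and take two points $a = \rho^{-}(u)u < b = \rho^{+}(u)u$ on the same ray. Both lie in the Kuratowski limsup of the respective families, and the whole segment between them lies in $\Gamma$ (order-convexity from the sandwich). Both points have bounded total orbits that are bounded away from $0$, since backward orbits stay in the limits of the sandwich. Now apply (\textoverline{A4}) along the backward orbits: $f_i(F^{-k}a) \ge f_i(F^{-k}b)$, with strict inequality in the coordinates that are strictly ordered. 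Because $a$ and $b$ lie on one ray, the ratio $b_i/a_i$ then strictly increases along backward iterates, which contradicts their boundedness away from $0$ and $\infty$ on the compact set $\Gamma$.

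Once $\rho^{-} = \rho^{+} =: \rho$, Dini's theorem gives uniform convergence, since the limits are monotone and squeezed; hence the common limit $\Sigma$ is the radial graph of $\rho$ and is continuous. Proposition~\ref{prop:Kuratowski-Hausdorff} then identifies the two limits in (i) and (ii) as Hausdorff limits. Properties \ref{P-unordered}--\ref{P-asymptotic} follow by passing to the limit in weak unorderedness and invariance, and from the sandwich for attraction. Under (A3\textprime) and (\textoverline{A4}\textprime) the same argument, with strict inequalities, yields unorderedness instead of weak unorderedness.
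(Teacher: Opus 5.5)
The paper gives no proof of this theorem; it is quoted from \cite{B-M-JDEA}, so appealing to that reference is exactly what the paper does. Your reconstruction, however, fails at its crux, the proof that $\rho^{-} = \rho^{+}$. Write $a_k := F^{-k}a$, $b_k := F^{-k}b$ and $I := \{\, i : u_i > 0 \,\}$. For each $i \in I$, (\textoverline{A4}-b) only gives that the ratio $r_{k,i} := (b_k)_i/(a_k)_i$ satisfies $r_{k+1,i} = r_{k,i}\, f_i(a_{k+1})/f_i(b_{k+1}) > r_{k,i}$. A strictly increasing sequence that stays bounded is no contradiction. Moreover, the backward orbits are bounded away from $0$ only in norm, not coordinatewise, so nothing in your argument even bounds $r_{k,i}$.

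The missing step is a limit argument that makes the strict inequality uniform. Since $r_{0,i} = \lambda := \rho^{+}(u)/\rho^{-}(u) > 1$ for all $i \in I$, you get $b_k \ge \lambda a_k$ for every $k$. Choose $k_j \to \infty$ with $(a_{k_j}, b_{k_j}) \to (\bar a, \bar b)$ and $(a_{k_j+1}, b_{k_j+1}) \to (\hat a, \hat b)$. Because the backward orbit of $a$ stays radially above $\varepsilon\Delta$, $\lVert a_k \rVert_1 \ge \varepsilon$, so $\bar a \ne 0$. Hence some $i$ has $\bar a_i > 0$ and $\bar b_i \ge \lambda \bar a_i > \bar a_i$. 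In that coordinate the monotone sequence $(r_{k,i})_k$ has a finite limit along $k_j$, so it converges. Then $f_i(a_{k+1})/f_i(b_{k+1}) \to 1$, which gives $f_i(\hat a) = f_i(\hat b)$. But $F(\hat a) = \bar a < \bar b = F(\hat b)$, $F_i(\hat a) < F_i(\hat b)$ and $\hat a_i > 0$, so (\textoverline{A4}-b) forces $f_i(\hat a) > f_i(\hat b)$, a contradiction.

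Several smaller steps also need repair.

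First, (\textoverline{A4}-b) with $x = 0$, $y = \ev_i$ is vacuous, since its conclusion is met by $x_i = 0$. To get $f_i(0) > 1$, go through an intermediate point: $f_i(0) \ge f_i(t\ev_i) > f_i(\ev_i) = 1$ for small $t > 0$, using (\textoverline{A4}-a) and then (\textoverline{A4}-b).

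Second, $H$ is not free of order-related pairs: for $d \ge 2$, $(1+\varkappa)\ev_1 < (1+\varkappa)\ev_1 + \tfrac12 \ev_2$, and both lie in $H$. What pulls back from $H$ is only the face-relative weak unorderedness (no $x < y$ with $x_i < y_i$ whenever $y_i > 0$). That version is also what you need for rays in boundary directions to meet $F^n(H)$ only once.

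Third, strong unorderedness is not preserved under Kuratowski limits. Under (A3-b) alone, $F^n(\varepsilon\Delta)$ is already unordered in the strong sense, yet $\Sigma$ in general is not. So under (A3\textprime) and (\textoverline{A4}\textprime), unorderedness of $\Sigma$ must come from $F(\Sigma) = \Sigma$ together with retrotonicity, not from passing to the limit.

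Finally, \ref{P-asymptotic} is part of what ``weak carrying simplex'' means, and it does not follow from the sandwich. It needs a separate argument, which your plan does not supply.
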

By \cite[Sect.~3.3]{B-M-JDEA}, for each $n \in \NN$ we can write
\begin{equation*}
    F^n(\varepsilon \Delta) = \{\, R_{n T}(u) u : u \in \Delta \,\}, \quad F^n(H) = \{\, R^{n T}(u) u : u \in \Delta \,\},
\end{equation*}
where $R_{n T} \colon \Delta \to (0, \infty)$ [$R^{n T} \colon \Delta \to (0, \infty)$] are continuous functions (called the \emph{radial representations} of $F^n(\varepsilon \Delta)$ [$F^n(H)$].  Further, there is a continuous function $R^{*} \colon \Delta \to (0, \infty)$ (called the \emph{radial representation} of $\Sigma$) such that
\begin{equation*}
    \Sigma = \{\, R^{*}(u) u : u \in \Delta \,\}.
\end{equation*}
In the language of radial representations we have the following, see~\cite[Prop.~3.3(a)]{B-M-JDEA}:
\begin{proposition}  \label{prop:construction-0}
    Assume \textup{(A1)--(A3)} and \textup{(\textoverline{A4})}.  Then
    \begin{enumerate}
        \item[\textup{(i)}]
        $R_{n T}$ converge uniformly, as $n \to \infty$, to $R^{*}$; and
        \item[\textup{(ii)}]
        $R^{n T}$ converge uniformly, as $n \to \infty$, to $R^{*}$.
    \end{enumerate}
\end{proposition}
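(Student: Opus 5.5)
The plan is to derive the uniform convergence of the radial representations from the Hausdorff (equivalently, Kuratowski) convergence in Theorem~\ref{thm:construction-0}, using two facts. First, all the sets involved lie in the compact set $\Lambda$. Second, $\Sigma$ meets each ray $\{\, r u : r > 0 \,\}$, $u \in \Delta$, in exactly one point, namely $R^{*}(u) u$; this is \ref{P-radial-proj}. I give the argument for (i); part (ii) is identical with $\varepsilon\Delta$ replaced by $H$ and $R_{nT}$ by $R^{nT}$, since Theorem~\ref{thm:construction-0}(ii) provides the same two ingredients, namely $F^n(H) \subset \Lambda$ and $F^n(H) \to \Sigma$.

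Suppose, to the contrary, that $R_{nT}$ do not converge uniformly to $R^{*}$. Then there are $\delta > 0$, indices $n_k \to \infty$ and points $u_k \in \Delta$ such that $\abs{R_{n_k T}(u_k) - R^{*}(u_k)} \ge \delta$ for all $k$. Since $F^{n_k}(\varepsilon\Delta) \subset \Lambda = [0, (1+\varkappa)\ev]$ and $\norm{u_k}_1 = 1$, the numbers $R_{n_k T}(u_k)$ are bounded by $d(1+\varkappa)$. Using the compactness of $\Delta$, we pass to a subsequence along which $u_k \to u \in \Delta$ and $R_{n_k T}(u_k) \to r \in [0, d(1+\varkappa)]$. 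Then $R_{n_k T}(u_k) u_k \in F^{n_k}(\varepsilon\Delta)$ converges to $r u$. Because $F^{n}(\varepsilon\Delta) \to \Sigma$ in the Hausdorff metric, property (PK2) of Proposition~\ref{prop:Kuratowski-Hausdorff} gives $r u \in \Sigma$.

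Next, $0 \notin \Sigma$: indeed $\Sigma$ is homeomorphic via radial projection to $\Delta$, and by \ref{P-characterization} its orbits are bounded away from $0$. Hence $r > 0$, so $r u$ lies on the ray through $u$. By the uniqueness of the intersection of $\Sigma$ with that ray, $r = R^{*}(u)$. Since $R^{*}$ is continuous, $R^{*}(u_k) \to R^{*}(u) = r$, and therefore $\abs{R_{n_k T}(u_k) - R^{*}(u_k)} \to 0$, which contradicts the choice of $\delta$.

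The only delicate points are the two facts used above: that the radial parametrization of $\Sigma$ is single-valued, with $R^{*}$ continuous and positive, and that $0 \notin \Sigma$, which rules out the degenerate limit $r = 0$ where the direction $u$ would be lost. Both are supplied by \ref{P-radial-proj}. The compactness bound is supplied by the inclusions in Theorem~\ref{thm:construction-0}. Everything else is a routine subsequence argument.
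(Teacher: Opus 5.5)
Your proof is correct and follows essentially the paper's route. The paper quotes this statement from \cite[Prop.~3.3(a)]{B-M-JDEA}, and its own proof of the analogous Proposition~\ref{prop:construction-2} uses the same ingredients: confinement to the compact set $\Lambda$, property (PK2) of the Hausdorff limit, and uniqueness of the radial representation of $\Sigma$ via $\ell_1$-norms. There it is phrased as continuous convergence ($u^{(n)} \to u \Rightarrow R_{nT}(u^{(n)}) \to R^{*}(u)$) together with the equivalence of continuous and uniform convergence on the compact $\Delta$, rather than by contradiction; also, $0 \notin \Sigma$ already follows from $R^{*}$ taking values in $(0,\infty)$, so your appeal to orbits bounded away from $0$ is not needed.
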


\medskip
We go back to our reasoning.  Recall that the standing assumption is that \ref{Cara}, \ref{0-repel}, \ref{new-H3} or \ref{tilde-H3}, and (at~least) \ref{competitive} are satisfied.
\begin{theorem}~\
\label{thm:tilde-Sigma-attractor}
The following holds:
      \begin{enumerate}[label=\textup{(\roman*)}, ref=\textup{\ref{thm:tilde-Sigma-attractor}}\textup{(\roman*)}]
        \item
        \label{thm:tilde-Sigma-attractor-attracts}
        $\widetilde{\Sigma} \subset \widetilde{\Gamma}$ is the compact attractor, for the dynamical system $\mySemi$, of bounded sets in $\mytorus \times C_{+}$ whose closures are disjoint with $\mytorus \times \{0\}$;
        \item
        \label{thm:tilde-Sigma-attractor-phase}
        For any $x \in \Gamma \setminus \{0\}$ there is $y \in \Sigma$ such that $\displaystyle \lim\limits_{t \to +\infty} \norm{\Phi(t;0,x) -\Phi(t;0,y)} = 0$ \textup{[}under~\ref{strongly-competitive}, for any $x \in C_{+} \setminus \{0\}$ there is $y \in \Sigma$ such that $\displaystyle \lim\limits_{t \to +\infty} \norm{\Phi(t;0,x) -\Phi(t;0,y)} = 0$\textup{]};
        \item
        \label{thm:tilde-Sigma-attractor-char_of_differ}
        $\Gamma \setminus \Sigma$ is characterized as the set of those  $x \in C_{+}$ for which $\lim\limits_{t \to - \infty} \Phi(t;0,x) = 0$;
        \item
        \label{thm:tilde-Sigma-attractor-characterization}
        $\Sigma$ is characterised as the set of those $x \in C_{+}$ for which $\{\, \Phi(t; 0, x) : t \in (- \infty,\infty) \,\}$ is bounded and bounded away from $0$;
        \item
        \label{thm:tilde-Sigma-attractor-order-convex}
        $\Sigma$ is the boundary \textup{(}relative to $C_{+}$\textup{)} of $\Gamma$ and $\Gamma$ is order convex; in~particular, $\Gamma = \{\alpha x :  \alpha \in [0, 1], \ x \in \Sigma\}$.
    \end{enumerate}
\end{theorem}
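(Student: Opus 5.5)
Throughout, $\Sigma$ is the (weak) carrying simplex of the Poincaré map $\cP$. It exists by Theorem~\ref{thm:construction-0}, since (A1)--(A3) and (\textoverline{A4}) hold, and (A3)\textprime, (\textoverline{A4}\textprime) hold under~\ref{strongly-competitive}. The set $\widetilde{\Sigma}$ is built from it as in the preceding section. The strategy is to transfer each property \ref{P-attraction}--\ref{P-characterization} from $\cP$ to the continuous-time flow. The transfer uses Lemma~\ref{lm:bar-D-invariant}, Lemma~\ref{lm:attractor}/Corollary~\ref{cor:attractor} and continuous dependence (Theorem~\ref{thm:cont-depend}), exactly as Theorems~\ref{thm:attractor-ODE} and~\ref{thm:bar-S-attractor} were derived from Theorem~\ref{thm:global-attractor-exists}.

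\textbf{Part (i).} By \ref{P-invariant} and Lemma~\ref{lm:bar-D-invariant}, $\widetilde{\Sigma}$ is compact and $\mySemi$-invariant, and $\widetilde\Sigma\subset\widetilde\Gamma$ because $\Sigma\subset\Gamma$. Let $\widetilde A$ be bounded with $\overline{\widetilde A}\cap(\mytorus\times\{0\})=\emptyset$. As in the proof of Theorem~\ref{thm:bar-S-attractor}, I push $\widetilde A$ to the section $s=0$, putting $B:=\{\Phi(T;s,x):(s,x)\in\widetilde A\}$, taking representatives $s\in[0,T)$. The set $B$ is bounded. The key point is that $0\notin\bar B$. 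To see this, argue by contradiction: if $\Phi(T;s_m,x^{(m)})\to0$ with $(s_m,x^{(m)})\in\widetilde A$, pass to a subsequence $s_m\to s$, $x^{(m)}\to x\neq 0$ (by boundedness and the closure hypothesis). Invertibility and continuity of the inverse, via Lemma~\ref{lm:inverse} and Theorem~\ref{thm:cont-depend} applied to \eqref{eq:ODE-retro}, together with $\Phi(\cdot;\cdot,0)\equiv 0$, then force $x=0$, a contradiction. One must handle the subtlety $s\to T$, where the time interval collapses. This is harmless, since the backward flow from $0$ stays at $0$ and the data then converge to $\Phi(T;T,x)=x$. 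With $0\notin\bar B$, \ref{P-attraction} gives that $\Sigma$ attracts $B$ under $\cP$, so Corollary~\ref{cor:attractor} yields attraction of $\{0\}\times B$ by $\widetilde\Sigma$. The time shift $T-s$ finishes the argument as in Theorem~\ref{thm:bar-S-attractor}.

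\textbf{Part (ii).} Given $x$, take $y\in\Sigma$ from \ref{P-asymptotic}, so that $\norm{\cP^n(x)-\cP^n(y)}\to0$. Write $t=nT+\theta$ and $\Phi(t;0,\cdot)=\Phi(\theta;0,\cP^n(\cdot))$. The orbits $\cP^n(x),\cP^n(y)$ lie in a compact set: for $x\in\Gamma$ this is $\Gamma$, and in general the $\cP$-orbit is bounded since $\Gamma$ attracts bounded sets. The map $(\theta,z)\mapsto\Phi(\theta;0,z)$ is uniformly continuous on $[0,T]\times$compact, so the difference tends to $0$ uniformly in $\theta$.

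\textbf{Parts (iii)--(v).} For (iii): if $x\in\Gamma\setminus\Sigma$, then \ref{P-attractor} gives $\cP^{-n}(x)\to0$. Since $\Phi(-nT+\theta;0,x)=\Phi(\theta;0,\cP^{-n}(x))$ for $\theta\in[0,T]$, and by \eqref{eq:process-on-Gamma}, uniform continuity near $0$ together with $\Phi(\theta;0,0)=0$ gives $\Phi(t;0,x)\to0$ as $t\to-\infty$. Conversely, if $\Phi(t;0,x)\to0$, the solution is bounded on $(-\infty,0]$. The forward part is bounded by Lemma~\ref{lm:new:complete} and attraction, so Theorem~\ref{thm:attractor-ODE-ii} gives $x\in\Gamma$, and then $\cP^{-n}(x)\to0$ gives $x\in\Gamma\setminus\Sigma$ by \ref{P-attractor}. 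Part (iv) is analogous, using \ref{P-characterization}. Boundedness transfers via Theorem~\ref{thm:attractor-ODE-ii}. Boundedness away from $0$ of the $\cP$-orbit transfers to the continuous orbit because $\Phi(\theta;0,\cdot)$ for $\theta\in[0,T]$ maps a compact set avoiding $0$ to one avoiding $0$: its injectivity and $\Phi(\theta;0,0)=0$ do this, and compactness in $\theta$ makes it uniform. Part (v) is a verbatim restatement of \ref{P-order-convex}, since $\Gamma$ and $\Sigma$ are the sets for $\cP$.

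\textbf{Main obstacle.} I expect the delicate step to be $0\notin\bar B$ in part (i), that is, uniformity of the backward continuity in the initial time $s\in[0,T)$ across the wrap-around. I would handle it with Lemma~\ref{lm:slightly-insane} applied to the reversed system \eqref{eq:ODE-retro}.
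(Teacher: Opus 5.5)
Your proposal is correct and follows essentially the same route as the paper. Both proofs transfer \ref{P-attraction}--\ref{P-order-convex} from $\cP$ to $\mySemi$ by pushing sets to the section $s=0$, applying Corollary~\ref{cor:attractor} with a time shift, and using uniform continuity of $\Phi(\theta;0,\cdot)$ on $[0,T]\times(\text{compact})$. The only difference is cosmetic and lies in (i): the paper adjoins $\widetilde{A}_0$ to $B$ and proves $B$ is closed with $0\notin B$, whereas you show $0\notin\bar{B}$ directly by a sequential continuity argument, which handles the wrap-around $s\to T$ just as well.
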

\begin{proof}
    We show first that $\widetilde{\Sigma}$ is the compact attractor, for $\mySemi$, of compact sets in $\mytorus \times C_{+}$ that are disjoint with $\mytorus \times \{0\}$.  Let $\widetilde{\mathcal{A}}$ stand for the class of such sets.  Take $\widetilde{A} \in \widetilde{\mathcal{A}}$, and put
    \begin{equation*}
        B := \widetilde{A}_{0} \cup \bigcup\limits_{s \in [0, T)} \Phi(T; s , \widetilde{A}_s).
    \end{equation*}
    The set $B$ is clearly bounded.  We show that $B$ is closed.  Let $(y^{({m})})_{m = 1}^{\infty} \subset B$ be a sequence convergent to some $y$.  If $y \in \widetilde{A}_0$ we are done.  Assume that $y \notin \widetilde{A}_0$.  Since $\widetilde{A}_0$ is closed, after possibly ignoring finitely many terms we can write $y^{({m})} = \Phi(T; s_{m}, x^{({m})})$ with $(s_{m}, x^{({m})}) \in \widetilde{A}$. Because $\widetilde{A}$ is compact, we can extract a subsequence $(s_{m_k}, x^{({m}_k)})$ convergent to some $(s_0, x) \in \widetilde{A}$ as $k \to \infty$.  The case $s_0 = T$ is impossible, since then $y = \Phi(T; T, x) = \Phi(0; 0, x) = x$ and $(0, x) = (0, y) \in \widetilde{A}$, that is, $y \in \widetilde{A}_0$.  Therefore, there exists $\delta > 0$ such that $y^{({m}_k)} = \Phi(T; s_{m_k}, x^{({m}_k)})$ with $(s_{m_k}, x^{({m}_k)}) \in \widetilde{A}$ and $T - s_{m_k} \ge \delta$.  Hence, $y = \Phi(T; s_0, x) \in \Phi(T; s_0, \widetilde{A}_{s_0})$.  It follows by uniqueness of solutions that $0 \notin B$.  We now apply Corollary~\ref{cor:attractor} as in the proof of Theorem~\ref{thm:bar-S-attractor} to conclude that $\widetilde{\Sigma}$ is the compact attractor of compact sets in $\mytorus \times C_{+}$ that are disjoint with $\mytorus \times \{0\}$.  As such, it is contained in the compact attractor $\widetilde{\Gamma}$ of bounded subsets of $\mytorus \times C_{+}$.

    Let $x$ and $y$ be as in (ii).  The set $\{\, \cP^n(x) : n \in \NN \,\} \cup \Sigma$ is bounded and closed (indeed, its accumulation points belong to $\Sigma$), hence compact.  Take $\epsilon > 0$. Copying the reasoning in the proof of Lemma~\ref{lm:attractor} we obtain the existence of $\delta > 0$ such that if $\norm{\xi - \cP^n(x)} < \delta$ then $\norm{\Phi(s; 0, \xi) - \Phi(s; 0, \cP^n(x))} < \epsilon/2$ for all $n \in \NN$ and all $s \in [0, T]$.  Let $n_0 \in \NN$ be such that $\norm{\cP^n(x) - \cP^n(y)} < \min\{\delta, \epsilon/2\}$ for all $n \ge n_0$.  Representing $t \ge n_0$ as $t = n T + s$, $s \in [0, T)$, we estimate
    \begin{equation*}
        \norm{\Phi(t;0,x) - \Phi(t;0,y)} \le \norm{\Phi(s;0,\cP^n(x)) - \Phi(s;0,\cP^n(y))} + \norm{\cP^n(x) - \cP^n(y)} < \epsilon,
    \end{equation*}
    which concludes the proof of part~(ii).

    In view of~\ref{P-attractor} the only thing needed to prove in~(iii) is that if $x \in C_{+}$ has the property that $\Phi(t; 0, x)$ exists for all $t \le 0$ and the set $\lim\limits_{n \to \infty} \Phi(-n; 0, x) = 0$ then $\lim\limits_{t \to - \infty} \Phi(t; 0, x) = 0$.  As the set $\{\, \Phi(-n; 0, x) : n \in \NN \,\} \cup \{0\}$ is compact we can copy the proof in the previous paragraph.

    Observe that, by uniqueness, the projection $\widehat{\Sigma}$ of $\widetilde{\Sigma}$ onto the second axis is a compact subset of $C_{+}$ not containing $0$.  As, for any $x \in \Sigma$ there holds $\{\, \Phi(t; 0, x) : t \in (- \infty, \infty) \,\} \subset \widehat{\Sigma}$, part~(iv) follows.

    Part (v) is just \ref{P-order-convex}.
\end{proof}

Observe that for any $s \in \mytorus$ the system
\begin{equation} \label{eq:ODE-shifted}
    \dot{x}_i = x_i g_i(t + s, x), \quad i \in \{1, \dots, d \}, \ x \in C_{+},
\end{equation}
satisfies all the assumptions \ref{Cara}, \ref{0-repel}, and~\ref{new-H3} or~\ref{tilde-H3}, and \ref{competitive} or \ref{strongly-competitive}.  Solutions of~\eqref{eq:ODE-shifted} are just solutions of~\eqref{eq:ODE} with $t$ replaced with $t + s$.

Denote by $\cP_s$ the analogue of $\cP$ for~\eqref{eq:ODE-shifted}.  It is immediate that $\cP_s = \Phi(T + s; s, \cdot)$.

By the characterization given in Theorem~\ref{thm:attractor-ODE-ii}, the compact attractor of bounded sets in $C_{+}$ for the dynamical system $(\cP_s^n)_{n = 1}^{\infty}$ equals just the section $\widetilde{\Gamma}_s$.  Further, by the characterization given in Theorem~\ref{thm:tilde-Sigma-attractor-characterization}, the carrying simplex for $(\cP_s^n)_{n = 1}^{\infty}$ equals the section $\widetilde{\Sigma}_s$.

\begin{theorem}~\
\label{thm:tilde-Sigma-further}
Let $s \in \mytorus$.  The following holds.
      \begin{enumerate}[label=\textup{(\roman*)}, ref=\textup{\ref{thm:tilde-Sigma-further}}\textup{(\roman*)}]
        \item
        \label{thm:tilde-Sigma-further-unordered}
        $\widetilde{\Sigma}_s$ is weakly unordered \textup{[}under~\ref{strongly-competitive} $\widetilde{\Sigma}_s$ is unordered\textup{]}.
        \item
        \label{thm:tilde-Sigma-further-radial}
        $\widetilde{\Sigma}_s$ is homeomorphic via radial projection to the $(d-1)$\nobreakdash-\hspace{0pt}dimensional standard probability simplex $\Delta$.
        \item
        \label{thm:tilde-Sigma-further-char_of_differ}
        $\widetilde{\Gamma}_s \setminus \widetilde{\Sigma}_s$ is characterized as the set of those  $x \in C_{+}$ for which $\lim\limits_{t \to - \infty} \Phi(t;s,x) = 0$;
        \item
        \label{thm:tilde-Sigma-further-characterization}
        $\widetilde{\Sigma}_s$ is characterized as the set of those $x \in C_{+}$ for which $\{\, \Phi(t; s, x) : t \in (- \infty,\infty) \,\}$ is bounded and bounded away from $0$;
        \item
        \label{thm:tilde-Sigma-further-attractor-order-convex}
        $\widetilde{\Sigma}_s$ is the boundary \textup{(}relative to $C_{+}$\textup{)} of $\widetilde{\Gamma}_s$ and $\widetilde{\Gamma}_s$ is order convex; in~particular, $\widetilde{\Gamma}_s = \{\, (s, \alpha x) :  \alpha \in [0, 1], \ x \in \widetilde{\Sigma}_s \,\}$.
    \end{enumerate}
\end{theorem}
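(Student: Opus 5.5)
The plan is to reduce everything to the already established results for the time-$0$ Poincaré map by working with the shifted system~\eqref{eq:ODE-shifted}. Fix $s \in \mytorus$ and a representative $s^\dagger \in [0,T)$. As observed just before the statement, \eqref{eq:ODE-shifted} again satisfies \ref{Cara}, \ref{0-repel}, \ref{new-H3} (or \ref{tilde-H3}) and \ref{competitive} (resp.\ \ref{strongly-competitive}). Its process is $\Psi(t;t_0,x) = \Phi(t+s^\dagger; t_0+s^\dagger, x)$, and its Poincaré map is $\cP_s = \Phi(T+s^\dagger; s^\dagger,\cdot)$. Consequently, all results of Sections~\ref{sect:Poincare} and~\ref{sect:time-periodic-CS} apply verbatim to $\cP_s$. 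In particular, the Proposition verifying (A1)--(A3) and (\textoverline{A4}) (resp.\ (A3\textprime), (\textoverline{A4}\textprime)) holds for $\cP_s$. Hence by \cite{B-M-JDEA} there exist a compact attractor $\Gamma^{(s)}$ and a weak carrying simplex (resp.\ carrying simplex) $\Sigma^{(s)}$ for $\cP_s$ satisfying \ref{P-unordered}--\ref{P-Lipschitz}.

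The key step is to identify $\Gamma^{(s)} = \widetilde{\Gamma}_s$ and $\Sigma^{(s)} = \widetilde{\Sigma}_s$. For $\Gamma$, I would use Theorem~\ref{thm:attractor-ODE-ii} applied to the shifted system: $\Gamma^{(s)}$ consists of those $x$ for which $\Psi(\cdot;0,x)$, i.e.\ $\Phi(\cdot;s^\dagger,x)$, is defined and bounded on $\RR$. On the other hand, $x \in \widetilde{\Gamma}_s$ means $x = \Phi(s^\dagger;0,\xi)$ with $\xi \in \Gamma$. By~\eqref{eq:process-on-Gamma}, $\Phi(t;s^\dagger,x) = \Phi(t;0,\xi)$, which is defined and bounded on $\RR$. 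Conversely, if $\Phi(\cdot;s^\dagger,x)$ is global and bounded, then $\xi := \Phi(0;s^\dagger,x)$ has a global bounded solution, so $\xi \in \Gamma$ by Theorem~\ref{thm:attractor-ODE-ii}, and $x = \Phi(s^\dagger;0,\xi)$. The identical argument, now using Theorem~\ref{thm:tilde-Sigma-attractor-characterization} (bounded and bounded away from $0$) in place of Theorem~\ref{thm:attractor-ODE-ii}, gives $\Sigma^{(s)} = \widetilde{\Sigma}_s$. Here I use that ``bounded away from $0$'' is a property of the whole trajectory, which is the same set for $\xi$ and $x$.

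With the identification in hand, the items follow directly. Item (i) is \ref{P-unordered} for $\cP_s$; item (ii) is \ref{P-radial-proj}; item (v) is \ref{P-order-convex}. Item (iv) is exactly the characterization just used, namely Theorem~\ref{thm:tilde-Sigma-attractor-characterization} for the shifted system rewritten through $\Psi(t;0,x)=\Phi(t+s^\dagger;s^\dagger,x)$. Item (iii) is Theorem~\ref{thm:tilde-Sigma-attractor-char_of_differ} for the shifted system: $\lim_{t\to-\infty}\Psi(t;0,x)=0$ is the same as $\lim_{t\to-\infty}\Phi(t;s^\dagger,x)=0$.

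The only point requiring care, and the likely obstacle, is the bookkeeping between the representative $s^\dagger$ and the class $s \in \mytorus$. One must check that changing $s^\dagger$ by $T$ changes neither $\widetilde{\Gamma}_s$, $\widetilde{\Sigma}_s$, nor the conditions in (iii)--(iv); this follows from~\eqref{eq:process-periodic}. One must also confirm that the weak versus strong alternatives in (i) transfer with the corresponding hypothesis \ref{competitive} versus \ref{strongly-competitive}, since the shift preserves the sets $\Theta_i$, $\Theta_{i,j}$ up to translation mod $T$.
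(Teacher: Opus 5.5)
Your proposal is correct and is essentially the paper's own argument. You pass to the shifted system~\eqref{eq:ODE-shifted}, identify its attractor and (weak) carrying simplex with $\widetilde{\Gamma}_s$ and $\widetilde{\Sigma}_s$ through the characterizations in Theorems~\ref{thm:attractor-ODE-ii} and~\ref{thm:tilde-Sigma-attractor-characterization}, and then read off (i), (ii) from \ref{P-unordered}, \ref{P-radial-proj} and (iii)--(v) from Theorem~\ref{thm:tilde-Sigma-attractor}.

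One small point that you, like the paper, pass over: under \ref{new-H3} the shifted system's axial periodic solutions pass through $\psi_i(s^\dagger)\ev_i$ at time $0$, not through $\ev_i$. So strictly one must first rescale by $\diag[\psi_1(s^\dagger), \ldots, \psi_d(s^\dagger)]^{-1}$ to obtain (A2). This is harmless, since a positive diagonal rescaling preserves all five properties.
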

\begin{proof}[Indication of proof]
    In view of the above remarks, parts (i) and~(ii) follow directly from \ref{P-unordered} and \ref{P-radial-proj}, respectively, whereas the remaining parts are consequences of Theorem~\ref{thm:tilde-Sigma-attractor}.
\end{proof}
In view of Theorem~\ref{thm:tilde-Sigma-further-radial} we define, for each $s \in \mytorus$, the (unique) \emph{radial representation} of $\widetilde{\Sigma}_s$ as the continuous function $R^{*}_s \colon \Delta \to (0, \infty)$ such that
\begin{equation*}
    \widetilde{\Sigma}_s = \{\, R^{*}_s(u) u : u \in \Delta \,\}.
\end{equation*}
Instead of $R^{*}_0$ we frequently write $R^{*}$.
\begin{lemma}
\label{lm:radial}
    The radial representation $R^{*}_s$ depends continuously on $s \in \mytorus$, uniformly in $u \in \Delta$.
\end{lemma}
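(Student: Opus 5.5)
The plan is to derive the continuity of $s \mapsto R^{*}_s$ from the continuity of the sections $s \mapsto \widetilde{\Sigma}_s$ in the Hausdorff metric, which is Proposition~\ref{prop:sections-continuity-i} applied to $D := \Sigma$. There is one preliminary point. Proposition~\ref{prop:sections-continuity} is stated for $s \in \mytorus$ with $\widetilde{D}_s = \Phi(s; 0, D)$, where $s$ is represented in $[0,T)$. So continuity across the identification $T \sim 0$ must be checked. This follows from the $\cP$\nobreakdash-\hspace{0pt}invariance of $\Sigma$ (property \ref{P-invariant}): $\Phi(T; 0, \Sigma) = \cP(\Sigma) = \Sigma = \widetilde{\Sigma}_0$, together with \eqref{eq:aux3}. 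Hence $s \mapsto \widetilde{\Sigma}_s$ is a continuous map from $\mytorus$ into $\mathcal{P}(K)$, where $K := \bigcup_{s} \widetilde{\Sigma}_s$. By Proposition~\ref{prop:sections-continuity-ii}, $K$ is compact. By the uniqueness argument in the proof of Theorem~\ref{thm:tilde-Sigma-attractor}, $K$ does not contain $0$.

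Next I would convert Hausdorff convergence into uniform convergence of radial functions. Since $\mytorus$ is compact, it suffices to show the following: if $s_m \to s$, then $\sup_{u \in \Delta} \abs{R^{*}_{s_m}(u) - R^{*}_s(u)} \to 0$. I would argue by contradiction. Suppose there are $\epsilon > 0$ and $u_m \in \Delta$, along a subsequence, with $\abs{R^{*}_{s_m}(u_m) - R^{*}_s(u_m)} \ge \epsilon$. Because $K$ is compact and avoids $0$, the values $R^{*}_{s_m}(u_m) = \norm{x^{(m)}}_1$, with $x^{(m)} := R^{*}_{s_m}(u_m) u_m \in K$, lie in a compact interval $[a,b] \subset (0,\infty)$. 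Passing to a further subsequence, we get $u_m \to u \in \Delta$ and $R^{*}_{s_m}(u_m) \to r \in [a,b]$. Then $x^{(m)} \to r u$, and by (PK2) (Proposition~\ref{prop:Kuratowski-Hausdorff}) we have $r u \in \widetilde{\Sigma}_s$. Since $\widetilde{\Sigma}_s$ meets each ray through $\Delta$ exactly once (Theorem~\ref{thm:tilde-Sigma-further-radial}), $r = R^{*}_s(u)$. The function $R^{*}_s$ is continuous, so $R^{*}_s(u_m) \to R^{*}_s(u) = r$. This gives $\abs{R^{*}_{s_m}(u_m) - R^{*}_s(u_m)} \to 0$, a contradiction.

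The main obstacle is the uniqueness of the radial intersection, and the fact that the limit $ru$ actually lands on the ray of $u$ in $\widetilde{\Sigma}_s$ and is not degenerate. Both are handled by the following facts:
\begin{itemize}
\item the lower bound $a > 0$ obtained from $0 \notin K$;
\item the use of the $\ell_1$\nobreakdash-\hspace{0pt}norm, so that $u_m = x^{(m)}/\norm{x^{(m)}}_1$ depends continuously on $x^{(m)}$;
\item Theorem~\ref{thm:tilde-Sigma-further-radial}, which guarantees that each ray meets $\widetilde{\Sigma}_s$ in a single point.
\end{itemize}
The rest of the argument only uses (PK2) and compactness.
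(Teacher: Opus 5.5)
Your proof is correct and follows the same route as the paper's. Both arguments take the continuity of $s \mapsto \widetilde{\Sigma}_s$ from Proposition~\ref{prop:sections-continuity-i}, then use (PK2) and uniqueness of the radial intersection to get $R^{*}_{s_m}(u_m)u_m \to R^{*}_s(u)u$, and finally use compactness of $\Delta$ to upgrade continuous convergence to uniform convergence. Your contradiction argument just unpacks that last step, and you usefully make explicit two points the paper passes over: compatibility at $T \sim 0$ via $\cP(\Sigma)=\Sigma$, and the bounds $0<a\le R^{*}_{s_m}(u_m)\le b$ coming from $0\notin K$ with $K$ compact, which justify extracting convergent subsequences.
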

\begin{proof}
    By Proposition~\ref{prop:sections-continuity-i}, the map $\mytorus \ni s \mapsto \widetilde{\Sigma}_s$ is continuous.  Let $s_m \to s$ in $\mytorus$ and $u_m \to u$ in $\Delta$, as $m \to \infty$.  It follows from part~(PK2) of the definition of the Kuratowski convergence and the uniqueness of the radial representation that $R^{*}_{s_m}(u_m) u_m \to R^{*}_s(u) u$ as $m \to \infty$.  And, since $\Delta$ is compact, continuous convergence is equivalent to uniform convergence.  For similar reasoning, however in a slightly different context, see~\cite[the proof of Prop.~3.3(a)]{B-M-JDEA}.
\end{proof}

\begin{proposition}
\label{prop:boundary}
    $\widetilde{\Sigma}$ equals the boundary of $\widetilde{\Gamma}$ in $\mytorus \times C_{+}$.
\end{proposition}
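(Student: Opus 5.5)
We describe $\widetilde{\Gamma}$ and $\widetilde{\Sigma}$ through their sections and radial representations, and show both inclusions between $\widetilde{\Sigma}$ and $\partial\widetilde{\Gamma}$, the boundary being taken relative to $\mytorus \times C_{+}$. By Theorem~\ref{thm:tilde-Sigma-further-attractor-order-convex},
\begin{equation*}
    \widetilde{\Gamma} = \{\, (s, \alpha R^{*}_s(u) u) : s \in \mytorus,\ u \in \Delta,\ \alpha \in [0,1] \,\}, \qquad \widetilde{\Sigma} = \{\, (s, R^{*}_s(u) u) : s \in \mytorus,\ u \in \Delta \,\}.
\end{equation*}
The map $\Psi \colon (s, r, u) \mapsto (s, r u)$ is a homeomorphism of $\mytorus \times (0,\infty) \times \Delta$ onto $\mytorus \times (C_{+} \setminus \{0\})$. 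By Lemma~\ref{lm:radial}, $(s,u) \mapsto R^{*}_s(u)$ is jointly continuous and positive, which is what makes the sets below open or closed.

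\emph{Step 1: $\widetilde{\Sigma} \subset \partial\widetilde{\Gamma}$.} Take $(s, R^{*}_s(u)u) \in \widetilde{\Sigma}$. The points $(s, (1+1/m)R^{*}_s(u)u)$ lie in $\mytorus \times C_{+}$ and converge to it. None of them belongs to $\widetilde{\Gamma}$: the radial representation of $\widetilde{\Gamma}_s$ along the ray through $u$ stops at $R^{*}_s(u)$, and by uniqueness of the radial coordinates a point $\beta u$ with $\beta > R^{*}_s(u)$ is not of the form $\alpha R^{*}_s(u) u$ with $\alpha \le 1$. Since $\widetilde{\Sigma} \subset \widetilde{\Gamma}$ and $\widetilde{\Gamma}$ is compact (Theorem~\ref{thm:bar-S-attractor}), each such point is in $\partial\widetilde{\Gamma}$.

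\emph{Step 2: $\partial\widetilde{\Gamma} \subset \widetilde{\Sigma}$.} Since $\widetilde{\Gamma}$ is closed, a boundary point lies in $\widetilde{\Gamma}$, so it suffices to show that $\widetilde{\Gamma} \setminus \widetilde{\Sigma}$ is open in $\mytorus \times C_{+}$. This set consists of two parts.
\begin{itemize}
    \item The points with $x \ne 0$ form the set $\Psi(\{\, (s,r,u) : 0 < r < R^{*}_s(u) \,\})$. This set is open by joint continuity of $R^{*}$ and because $\Psi$ is a homeomorphism.
    \item The points $(s,0)$ form the remaining part. A neighbourhood of such a point lies in $\widetilde{\Gamma}$ once $\norm{x} < \min_{s,u} R^{*}_s(u) \cdot \min_{u \in \Delta}\norm{u}$. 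This minimum is positive because $\mytorus \times \Delta$ is compact and $R^{*} > 0$.
\end{itemize}
Hence every point of $\widetilde{\Gamma} \setminus \widetilde{\Sigma}$ is interior, and $\partial\widetilde{\Gamma} = \widetilde{\Sigma}$.

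The main obstacle is the joint continuity of $R^{*}$ in $(s,u)$, which is needed for the openness in Step 2. Lemma~\ref{lm:radial} supplies exactly this, since it gives uniform continuity in $s$ together with continuity in $u$ for each fixed $s$. A secondary point is that $0$ lies on the boundary of $C_{+}$ in $\RR^d$. This causes no trouble because the boundary is taken relative to $\mytorus \times C_{+}$.
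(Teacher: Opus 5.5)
Your proof is correct and follows the paper's argument. The scaling $(s,(1+\epsilon)x)\notin\widetilde{\Gamma}$ gives $\widetilde{\Sigma}\subset\partial\widetilde{\Gamma}$, and the continuity of the radial representation from Lemma~\ref{lm:radial} shows that every point of $\widetilde{\Gamma}\setminus\widetilde{\Sigma}$ is interior to $\widetilde{\Gamma}$. Your separate treatment of the points $(s,0)$, via the positive minimum of $R^{*}$ on $\mytorus\times\Delta$, covers a case that the paper's proof skips by writing $x=\alpha R_s(u)u$ with $\alpha\in(0,1)$.
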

\begin{proof}
    Let $(s, x) \in \widetilde{\Sigma}$.  Then, by Theorem~\ref{thm:tilde-Sigma-further-attractor-order-convex}, $(s, (1 + \epsilon)x) \notin \widetilde{\Gamma}$ for any $\epsilon > 0$, which shows that $(s, x)$ belongs to the boundary of $\widetilde{\Gamma}$ in $\mytorus \times C_{+}$.

    $\widetilde{\Gamma}$ is compact, so it is closed in $\mytorus \times C_{+}$, consequently, its boundary in $\mytorus \times C_{+}$ is a subset of $\widetilde{\Gamma}$.  Therefore, we need to exclude the case that some $(s, x) \in \widetilde{\Gamma} \setminus \widetilde{\Sigma}$ belongs to the boundary.  To do that, take $(s, x)$ to be so.  In view of Theorem~\ref{thm:tilde-Sigma-further-attractor-order-convex} and the definition of radial representation, there are $u \in \Delta$ and $\alpha \in (0, 1)$ such that $x = \alpha R_s(u) u$.  It follows from Lemma~\ref{lm:radial} that there is $\delta > 0$ such that if $\tilde{d}((\bar{s},y), (s,x)) < \delta$ then $y = \beta R_{\bar{s}}(v) v$ for some $v \in \Delta$ sufficiently close to $u$  and some $\beta \in (0, (1 + \alpha)/2)$, which means that the open ball centred at $(s, x)$ with radius $\delta$ is contained in $\widetilde{\Sigma}$.  Hence $(s, x)$ cannot belong to the boundary of $\widetilde{\Sigma}$ in $\mytorus \times C_{+}$.
\end{proof}

\begin{definition}[Periodic weak carrying simplex / periodic carrying simplex]
We shall refer to $\widetilde{\Sigma}$ obtained above as the \emph{time\nobreakdash-\hspace{0pt}periodic weak carrying simplex} \textup{(}or, under~\ref{strongly-competitive}, the \emph{time\nobreakdash-\hspace{0pt}periodic carrying simplex}\textup{)}.
\end{definition}

\smallskip
Indeed, $\widetilde{\Sigma}$ can be considered a generalization of the notion of carrying simplex $\Sigma$ to the case when the map $F \colon C_{+} \to C_{+}$ is replaced by $\mathbb{S}(T, \cdot)  \colon \mytorus \times C_{+} \to \mytorus \times  C_{+}$.  The latter is an example of a skew-product map:  the first coordinate (corresponding to the \emph{base} $\mytorus$) of the action depends only on the first coordinate, whereas the second coordinate (corresponding to the \emph{fibre} $C_{+}$) depends on both coordinates.  Further, for each $s \in \mytorus$ the map
\begin{equation*}
    \left[\, C_{+} \ni x \mapsto \Phi(s^{\dagger} + T; s^{\dagger}, x) \in C_{+} \,\right]
\end{equation*}
satisfies (A1)--(A3) and (\textoverline{A4}) [resp.\ (A1)--(A2), (A3)\textprime\ and (\textoverline{A4}\textprime)]

\smallskip
In a couple of results below we will consider some fixed $s \in \mytorus$, identified with its unique representative in $[0, 1)$.
\begin{theorem}
\label{thm:construction-1}
    \begin{enumerate}
        \item[\textup{(i)}]
        There exists $\varepsilon > 0$ such that
        \begin{equation*}
            \widetilde{\Sigma}_s = \lim\limits_{n \to \infty} \Phi(n T + s; 0, \varepsilon \Delta)
        \end{equation*}
        for any $s \in [0, 1)$.
        \item[\textup{(ii)}]
        \begin{equation*}
            \widetilde{\Sigma}_s = \lim\limits_{n \to \infty} \Phi(n T + s; 0, H)
        \end{equation*}
        for any $s \in [0, 1)$.
    \end{enumerate}
\end{theorem}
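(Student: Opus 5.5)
The plan is to reduce the statement to Theorem~\ref{thm:construction-0} for the Poincaré map $\cP = \Phi(T;0,\cdot)$, and then push the convergence forward by the fixed-time map $\Phi(s;0,\cdot)$, using continuity of the hyperspace map (Proposition~\ref{prop:hyperspace-continuous}).

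First, by the process property \eqref{eq:process} and periodicity \eqref{eq:process-periodic}, for every $n\in\NN$ and $s\in[0,1)$ we have
\begin{equation*}
\Phi(nT+s;0,\cdot)=\Phi(nT+s;nT,\Phi(nT;0,\cdot))=\Phi(s;0,\cdot)\circ\cP^n .
\end{equation*}
Consequently $\Phi(nT+s;0,\varepsilon\Delta)=\Phi(s;0,\cP^n(\varepsilon\Delta))$, and likewise for $H$. By the proposition verifying (A1), (A2), (A3), (\textoverline{A4}) for $\cP$, Theorem~\ref{thm:construction-0} applies. It gives an $\varepsilon>0$, independent of $s$ since it concerns only $\cP$, with $\cP^n(\varepsilon\Delta)\subset\Lambda$ and $\cP^n(\varepsilon\Delta)\to\Sigma$ in the Hausdorff metric. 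It also gives $\cP^n(H)\subset\Lambda$ and $\cP^n(H)\to\Sigma$.

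Second, all the sets $\cP^n(\varepsilon\Delta)$, $\cP^n(H)$ and $\Sigma$ lie in the compact set $X:=\Lambda=[0,(1+\varkappa)\ev]$. The map $h:=\Phi(s;0,\cdot)\restriction_X$ is continuous by Theorem~\ref{thm:cont-depend} and Lemma~\ref{lm:new:complete}, and its image $Y:=h(X)$ is compact. Proposition~\ref{prop:hyperspace-continuous} then yields
\begin{equation*}
h^{\star}(\cP^n(\varepsilon\Delta))\to h^{\star}(\Sigma)\quad\text{and}\quad h^{\star}(\cP^n(H))\to h^{\star}(\Sigma)
\end{equation*}
in $(\mathcal{P}(Y),d_H)$. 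Since all these sets lie in $Y$, Hausdorff and Kuratowski limits agree by Proposition~\ref{prop:Kuratowski-Hausdorff}.

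Third, we identify $h^{\star}(\Sigma)=\Phi(s;0,\Sigma)$ with $\widetilde{\Sigma}_s$. This holds by the definition of $\widetilde{D}_s=\{\Phi(s;0,x):x\in D\}$ applied to $D=\Sigma$; we use the $\cP$-invariance of $\Sigma$ and Lemma~\ref{lm:bar-D-invariant} only to note that this section is the one appearing in Theorem~\ref{thm:tilde-Sigma-further}. This proves (i) and (ii).

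The only delicate point is that Theorem~\ref{thm:construction-0}(ii) is stated for $F^n(H)$ with $n\ge0$ and for the specific $\varkappa$. Any $\varkappa>0$ works here because the (A)-verification proposition holds for arbitrary $\varkappa$, so no real obstacle remains. Uniformity in $s$ is not claimed, so a pointwise-in-$s$ argument suffices. If uniformity were wanted, I would use the joint continuity of $(s,x)\mapsto\Phi(s;0,x)$ on the compact set $[0,T]\times\Lambda$, which gives uniform continuity.
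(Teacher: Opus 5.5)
Your proposal is correct and takes essentially the same route as the paper. You factor $\Phi(nT+s;0,\cdot)=\Phi(s;0,\cdot)\circ\cP^n$ via \eqref{eq:process} and \eqref{eq:process-periodic}, invoke Theorem~\ref{thm:construction-0} for $\cP$, and push the Hausdorff convergence through the continuous hyperspace map $(\Phi(s;0,\cdot){\restriction}_{\Lambda})^{\star}$ of Proposition~\ref{prop:hyperspace-continuous}. Your extra remarks, that $\varepsilon$ depends only on $\cP$ and that $\Phi(s;0,\Sigma)=\widetilde{\Sigma}_s$, simply make explicit what the paper's proof leaves implicit.
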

\begin{proof}
    By Theorem~\ref{thm:construction-0}, there exists $\varepsilon > 0$ such that $\widetilde{\Sigma}_0 = \lim\limits_{n \to \infty} \Phi(n T; 0, \varepsilon \Delta)$.

    Fix $s \in [0, 1)$.  We have, by~\eqref{eq:process} and the definition of $\cP^n$,
    \begin{equation*}
        \Phi(n T + s; 0, \varepsilon \Delta) = \Phi(n T + s; n T, \Phi(n T; 0, \varepsilon \Delta)) = \Phi(n T + s; n T, \cP^n(\varepsilon \Delta)),
    \end{equation*}
    which is equal, by~\eqref{eq:process-periodic}, to
    \begin{equation*}
        \Phi(s; 0, \cP^n(\varepsilon \Delta)).
    \end{equation*}
    Application of Proposition~\ref{prop:hyperspace-continuous} gives that $(\Phi(s,0; \cdot){\restriction}_{\Lambda})^{\star}$ is continuous on $\mathcal{P}(\Lambda)$, from which it follows that
    \begin{equation*}
        \widetilde{\Sigma}_s = \Phi(s; 0, \lim_{n \to \infty} \cP^n(\varepsilon \Delta)) = \lim\limits_{n \to \infty} \Phi(n T + s; 0, \varepsilon \Delta).
    \end{equation*}

    A proof of part~(ii) goes in much the same way.
\end{proof}
In the rest of the present subsection $\varepsilon > 0$ is as in Theorem~\ref{thm:construction-0}

\smallskip
There is also another way in which $\Phi(n T + s; 0, \varepsilon \Delta)$ [$\Phi(n T + s; 0, H)$] converge to $\widetilde{\Sigma}_s$.

\begin{lemma}\label{lm:aux-add-1}
   \begin{enumerate}
       \item[\textup{(i)}]
       For each $n \in \NN$ and each $s \in \mytorus$ we can write
           \begin{equation*}
               \Phi(n T + s; 0, \varepsilon \Delta) = \{\, R_{n T + s}(u) u : u \in \Delta \,\},
           \end{equation*}
        where $R_{n T + s} \colon \Delta \to (0, \infty)$ is continuous.
        \item[\textup{(ii)}]
        For each $n \in \NN$ and each $s \in \mytorus$ we can write
           \begin{equation*}
               \Phi(n T + s; 0, H) = \{\, R^{n T + s}(u) u : u \in \Delta \,\},
           \end{equation*}
        where $R^{n T + s} \colon \Delta \to (0, \infty)$ is continuous.
   \end{enumerate}
\end{lemma}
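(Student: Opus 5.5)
The plan is to show that each set $\Phi(nT+s;0,\varepsilon\Delta)$ (resp.\ $\Phi(nT+s;0,H)$) meets every open ray $\{\lambda u:\lambda>0\}$, $u\in\Delta$, in exactly one point. The function $R_{nT+s}$ (resp.\ $R^{nT+s}$) is then defined as the radial coordinate of that point, and continuity follows from compactness.

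First write
\[
\Phi(nT+s;0,\varepsilon\Delta)=\Phi(s;0,\cP^n(\varepsilon\Delta))=\Phi\bigl(nT+s;nT,\cP^n(\varepsilon\Delta)\bigr),
\]
as in the proof of Theorem~\ref{thm:construction-1}. By \cite[Sect.~3.3]{B-M-JDEA}, the set $\cP^n(\varepsilon\Delta)$ is already of the form $\{R_{nT}(u)u : u\in\Delta\}$. It is also weakly unordered, which comes out of the construction in \cite{B-M-JDEA} via weak retrotonicity. The same holds for $\cP^n(H)$. Alternatively, one applies the results of \cite{B-M-JDEA} directly to the map $x\mapsto\Phi(nT+s;0,x)$. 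By Lemma~\ref{lm:homeo} this map is a homeomorphism onto its image, it is weakly retrotone by Proposition~\ref{prop:retrotone}, and it is of Kolmogorov type with a positive continuous $f$, defined as in~\eqref{eq:f_i} with $T$ replaced by $nT+s$.

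The key step is \emph{uniqueness}: no ray meets $\Phi(nT+s;0,\varepsilon\Delta)$ twice. Suppose $\Phi(nT+s;0,x)=\lambda\,\Phi(nT+s;0,y)$ with $\lambda>1$ and $x,y\in\varepsilon\Delta$. Then $\Phi(nT+s;0,y)<\Phi(nT+s;0,x)$. By Proposition~\ref{prop:retrotone} (weak retrotonicity of $\Phi(t;0,\cdot)$ for every $t>0$) we get $y<x$, with strict inequality $y_i<x_i$ in every coordinate $i$ where the images differ. These are exactly the coordinates in which the images are positive. But $\varepsilon\Delta$ is unordered, since $\sum_i x_i=\sum_i y_i$, so $y<x$ is impossible. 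The same argument applies to $H$: if $y<x$ with both in $H$, then for any $j$ with $y_j=1+\varkappa$ we must have $x_j=1+\varkappa$ too. So the coordinates where $x,y$ differ exclude those $j$. The coordinates with positive image include every $i$ with $x_i>0$, hence all coordinates of $\operatorname{supp}$ where equality at $1+\varkappa$ holds. This contradicts strict inequality $y_j<x_j$ there. The case $0<\lambda<1$ is symmetric.

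Next comes \emph{existence}: every ray meets the image. Faces are invariant, since the supports of $x$ and $\Phi(t;0,x)$ coincide by Theorem~\ref{thm:exist}. So I would argue by induction on faces, or by a degree argument. The radial projection $x\mapsto \Phi(nT+s;0,x)/\|\Phi(nT+s;0,x)\|_1$ from $\varepsilon\Delta$ (resp.\ from $H$, which is radially homeomorphic to $\Delta$) to $\Delta$ is continuous. It maps each face $\Delta_I$ into itself, and is therefore homotopic to the identity rel faces via the straight-line homotopy inside each face. A map of the simplex preserving all faces is surjective, by a standard Brouwer/degree argument. Combined with uniqueness, the projection is a bijection. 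Since it maps a compact set to a Hausdorff space, it is a homeomorphism. Its inverse composed with $\|\Phi(nT+s;0,\cdot)\|_1$ gives $R_{nT+s}$, which is continuous. Positivity of $R_{nT+s}$ holds because $0$ is a fixed point and $\Phi(t;0,\cdot)$ is injective, so $0$ is not in the image.

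I expect the main obstacle to be the uniqueness step for $H$, where the unorderedness of $H$ is only weak and one must use part~(i) of Proposition~\ref{prop:retrotone} carefully, coordinate by coordinate. If that becomes delicate, I would instead rely on the homotopy invariance from \cite[Sect.~3.3]{B-M-JDEA} applied to the Kolmogorov map $\Phi(nT+s;0,\cdot)$, whose assumptions (A1), (A3) and (\textoverline{A4}) are verified exactly as for $\cP$.
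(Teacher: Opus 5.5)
Your proposal is correct, but it takes a somewhat different route from the paper's.

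The paper factors $\Phi(nT+s;0,\cdot)=\Phi(s;0,\cdot)\circ\cP^n$. It takes weak unorderedness of $\cP^n(\varepsilon\Delta)$ (resp.\ $\cP^n(H)$) from \cite[Lemma~3.10]{B-M-JDEA} and pushes it forward using only the weak retrotonicity of $\Phi(s;0,\cdot)$. Its indication stops at weak unorderedness. The remaining facts, that every ray through $\Delta$ is actually met and that the radial function is continuous, are left to the machinery of \cite[Sect.~3.3]{B-M-JDEA}.

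You instead apply Proposition~\ref{prop:retrotone} once, to $\Phi(t;0,\cdot)$ with $t=nT+s>0$, directly on $\varepsilon\Delta$ and $H$, whose (weak) unorderedness is elementary. You then prove the existence half by the surjectivity of a face-preserving self-map of $\Delta$, and continuity of $R_{nT+s}$, $R^{nT+s}$ comes from compactness.

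The paper's route is shorter on the page and stays aligned with the Poincar\'e-map results it reuses later, e.g.\ in Theorem~\ref{thm:construction-2}. Yours is self-contained, does not need \cite[Lemma~3.10]{B-M-JDEA}, and does not use the particular choice of $\varepsilon$ from Theorem~\ref{thm:construction-0} at all.

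Minor remarks:
\begin{enumerate}
\item The $H$ case is not delicate. Suppose $\Phi(t;0,x)=\lambda\Phi(t;0,y)$ with $\lambda>1$, and pick $j$ with $y_j=1+\varkappa$. Then $y_j>0$, so Proposition~\ref{prop:retrotone}(i) gives $y_j<x_j\le 1+\varkappa$ at once, a contradiction.
\item Your fallback of applying \cite{B-M-JDEA} directly to $\Phi(nT+s;0,\cdot)$ would not go through verbatim. Assumption (A2) generally fails for that map, since $\Phi(nT+s;0,\ev_i)=\psi_i(s)\ev_i$. Your main argument does not need the fallback.
\item Your straight-line homotopy is through face-preserving maps rather than rel faces. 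The degree argument is unaffected.
\end{enumerate}
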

We will call $R_{n T + s}$ [$R^{n T + s}$] the \emph{radial representations} of the corresponding sets.
\begin{proof}[Indication of proof]
    We will prove (i), the proof of (ii) being similar.

    By~\cite[Lemma~3.10]{B-M-JDEA}, $\Phi(n T; 0, \varepsilon \Delta)$ is weakly unordered for each $n \in \NN$.  To show that $\Phi(n T + s; 0, \varepsilon \Delta)$ is weakly unordered we apply the approach as in the first two paragraphs of the proof of~\cite[Lemma~3.10]{B-M-JDEA}.  It suffices to show that $\Phi(n T + s; n T, \cdot) = \Phi(s; 0, \cdot)$ is weakly retrotone, which follows from Proposition~\ref{prop:retrotone}.
\end{proof}

Let $R \colon \Delta \to (0, \infty)$ be any radial representation.  Observe that there holds
\begin{equation} \label{eq:radial}
    u = \frac{R(u)u}{\lVert R(u) u \rVert_1}, \qquad u \in \Delta.
\end{equation}

We will consider now what happens when we allow $s \in \mytorus$ to change.  As the choice of the representative $s^{\dagger} \in [0, T)$ of $s \in \mytorus$ is discontinuous at $0 \mod T$, some care is needed.
\begin{theorem} \label{thm:construction-2}
    Let $(s_n)_{n = 0}^{\infty} \subset \mytorus$ converge to $s \in \mytorus$.  Then
    \begin{enumerate}
        \item[\textup{(i)}]
        \begin{equation*}
            \lim_{n \to \infty} \Phi(n T + s^{\dagger}_n; 0,\varepsilon \Delta)  = \widetilde{\Sigma}_s,
        \end{equation*}
        \item[\textup{(ii)}]
        \begin{equation*}
            \lim_{n \to \infty} \Phi(n T + s^{\dagger}_n; 0, H)  = \widetilde{\Sigma}_s
        \end{equation*}
    \end{enumerate}
    where $s^{\dagger}_n$ is the unique representative of $s_n$ in $[0, T)$.
\end{theorem}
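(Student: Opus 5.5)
We will prove (i); (ii) is identical with $H$ in place of $\varepsilon\Delta$. The starting point is the rewriting used in the proof of Theorem~\ref{thm:construction-1}: by \eqref{eq:process} and \eqref{eq:process-periodic},
\begin{equation*}
    \Phi(n T + s^{\dagger}_n; 0, \varepsilon \Delta) = \Phi(s^{\dagger}_n; 0, \cP^n(\varepsilon \Delta)).
\end{equation*}
Put $K_n := \cP^n(\varepsilon\Delta)$. By Theorem~\ref{thm:construction-0}(i), $K_n \subset \Lambda$ and $K_n \to \Sigma = \widetilde{\Sigma}_0$ in $(\mathcal{P}(\Lambda), d_H)$. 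We want $\Phi(s^\dagger_n;0,K_n) \to \widetilde{\Sigma}_s$. The only difficulty is that $s^\dagger_n$ need not converge in $[0,T)$ when $s = 0 \bmod T$, so we split into cases.

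\emph{Case $s^\dagger \in (0,T)$.} Here $s^\dagger_n \to s^\dagger$ in $\RR$. The map $h\colon[0,T]\times\Lambda \to C_+$, $h(\theta,x) := \Phi(\theta;0,x)$, is continuous by Theorem~\ref{thm:cont-depend} and Lemma~\ref{lm:new:complete}, hence uniformly continuous on the compact set $[0,T]\times\Lambda$. Consequently
\begin{equation*}
    d_H\bigl(h(\theta,K), h(\theta',K')\bigr) \le \omega\bigl(\abs{\theta-\theta'} + d_H(K,K')\bigr),
\end{equation*}
where $\omega$ is the modulus of continuity of $h$. Indeed, each point $h(\theta,x)$ with $x\in K$ is $\omega$-close to $h(\theta',x')$ for some $x'\in K'$ with $\norm{x-x'}\le d_H(K,K')$, and the same holds symmetrically. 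This joint continuity of $(\theta,K)\mapsto h(\theta,K)$ is the main step, since it upgrades Proposition~\ref{prop:hyperspace-continuous}, where the time was fixed. It gives
\begin{equation*}
    \Phi(s^\dagger_n;0,K_n) \to \Phi(s^\dagger;0,\Sigma) = \widetilde{\Sigma}_s,
\end{equation*}
the last equality being the definition of the section of $\widetilde{\Gamma}$-type sets (cf.\ Proposition~\ref{prop:sections-continuity}).

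\emph{Case $s = 0$.} Split $(s_n)$ into the subsequence with $s^\dagger_n \to 0$ and the subsequence with $s^\dagger_n \to T$; if either is finite it is ignored. It suffices that both subsequences converge to $\widetilde{\Sigma}_0$, since Hausdorff convergence of the two subsequences implies convergence of the whole sequence. On the first subsequence the estimate above gives the limit $\Phi(0;0,\Sigma) = \Sigma = \widetilde{\Sigma}_0$. On the second it gives the limit $\Phi(T;0,\Sigma) = \cP(\Sigma) = \Sigma$, by the invariance \ref{P-invariant}. In both cases the limit is $\widetilde{\Sigma}_0$.

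The only point needing care, beyond the uniform continuity estimate, is that $h$ is defined on the closed interval $[0,T]$ rather than on $\mytorus$, so that the approach to $T$ is handled legitimately. This uses only the forward existence of solutions from Lemma~\ref{lm:new:complete}. Alternatively, the Kuratowski criterion of Proposition~\ref{prop:Kuratowski-Hausdorff} can be verified directly, using compactness of $\Lambda$ and Lemma~\ref{lm:slightly-insane}, in the same way as in the proof of Proposition~\ref{prop:sections-continuity}.
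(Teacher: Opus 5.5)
Your proof is correct, and it takes a different route from the paper's. The paper checks the Kuratowski conditions (PK1) and (PK2) pointwise, as in the proof of Proposition~\ref{prop:sections-continuity}:
\begin{itemize}
\item For (PK1), it fixes $w\in\widetilde{\Sigma}_s$, picks $v\in\Sigma$ with $\Phi(s^\dagger;0,v)=w$, and takes an approximating sequence in $\varepsilon\Delta$ from Theorem~\ref{thm:construction-0}(i). In the wrap-around case it splices two such sequences, one aimed at $w$ and one at the $\cP$-preimage of $w$ in $\Sigma$, according to whether $s_n^\dagger\le T/2$.
\item For (PK2), it pushes the limit point forward to time $(n_k+1)T$ so that it lands in $\Sigma$, and then pulls back using \eqref{eq:process-inverse}.
\end{itemize}

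You instead prove one quantitative estimate, $d_H(h(\theta,K),h(\theta',K'))\le\omega(\abs{\theta-\theta'}+d_H(K,K'))$. This is joint continuity of $(\theta,K)\mapsto\Phi(\theta;0,K)$ on $[0,T]\times\mathcal{P}(\Lambda)$, and it gives both Kuratowski inclusions at once. You then handle the discontinuity of $s\mapsto s^\dagger$ simply by noting that the two possible limits, $\Phi(0;0,\Sigma)$ and $\Phi(T;0,\Sigma)=\cP(\Sigma)$, coincide by \ref{P-invariant}.

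What each approach buys:
\begin{itemize}
\item Your version is shorter and needs no backward-in-time solutions. It also applies verbatim to any sequence of compact sets converging to $\Sigma$ inside a fixed compact set, so part (ii) really is identical.
\item The paper's version uses only pointwise continuous dependence rather than uniform continuity on $[0,T]\times\Lambda$. The price is building approximating sequences by hand.
\end{itemize}

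One small point of presentation: define your two subsequences explicitly as the indices with $s_n^\dagger\le T/2$ and those with $s_n^\dagger>T/2$. Then they visibly partition $\NN$, and they converge to $0$ and $T$ respectively because $d_{\mytorus}(s_n,0)\to0$.
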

\begin{proof}
    We prove only (i), the proof of (ii) being similar.  By Proposition~\ref{prop:Kuratowski-Hausdorff}, it suffices to prove the following:
    \begin{itemize}
        \item[(I)]
        for each $w \in \widetilde{\Sigma}_s$ there is a sequence $(u^{(n)})_{n = 0}^{\infty} \subset \varepsilon \Delta$ such that
        \begin{equation*}
            \lim_{n \to \infty} \Phi(n T + s^{\dagger}_n; 0,u^{(n)})  = w;
        \end{equation*}
        \item[(II)]
        for any sequences $(n_k)_{k = 0}^{\infty}$ and $(u^{(n_k)})_{k = 0}^{\infty} \subset \varepsilon \Delta$ such that $\lim\limits_{k \to \infty} n_k = \infty$ and  $\lim\limits_{k \to \infty} \Phi(n_k T + s^{\dagger}_{n_k}; 0,u^{(n_k)}) = w$ there holds $w \in \widetilde{\Sigma}_s$.
    \end{itemize}
    First of all, observe that for any $u \in C$ and any $n \in \NN$ there holds
    \begin{equation} \label{eq:aux-1}
        \Phi(n T + s_n^{\dagger}; 0, u) \stackrel{\eqref{eq:process}}{=} \Phi(n T + s_n^{\dagger}; n T, \Phi(n T; 0, u)) \stackrel{\eqref{eq:process-periodic}}{=} \Phi(s_n^{\dagger}; 0, \Phi(n T; 0 , u)).
    \end{equation}
    Consider first the case that $\lim\limits_{n \to \infty} s_n^{\dagger} =: s^{\dagger}$ exists (as a number from $[0, T]$).  Fix $w \in \widetilde{\Sigma}_s$.  As $\widetilde{\Sigma}_s = \Phi(s^{\dagger}; 0, \Sigma)$ and, by Lemma~\ref{lm:homeo}, $\Phi(s^{\dagger}; 0,\cdot)\!\!\restriction_{\Sigma}$ is a homeomorphism of $\Sigma$ onto its image, there is a unique $v \in \Sigma$ such that $\Phi(s^{\dagger}; 0, v) = w$.  Theorem~\ref{thm:construction-0}(i) gives, via (PK1), the existence of $(u^{(n)})_{n = 1}^{\infty} \subset \varepsilon \Delta$ such  that $\Phi(n T; 0, u^{(n)})$ converge, as $n \to \infty$, to $v$.  It follows from Theorem~\ref{thm:cont-depend} via~\eqref{eq:aux-1} that
    \begin{equation*}
        \lim_{n \to \infty} \Phi(n T + s^{\dagger}_n; 0, u^{(n)}) = \lim_{n \to \infty} \Phi(s_n^{\dagger}; 0, \Phi(n T; 0 , u^{(n)})) = \Phi(s^{\dagger}; 0, v) = w.
    \end{equation*}

    Assume now that $\lim\limits_{n \to \infty} s_n^{\dagger} =: s^{\dagger}$ does not exist.  This means that the sequence $(s_n)_{n = 0}^{\infty}$ has exactly two accumulation points, namely $0$ and $T$.  Fix $w \in \Sigma$, and let $v \in \Sigma$ be the unique element of $\Sigma$ such that $\Phi(T; 0, v) = w$.  Theorem~\ref{thm:construction-0}(i) gives, via (PK1), the existence of
    \begin{itemize}
        \item
        $(\hat{u}^{(n)})_{n = 1}^{\infty} \subset \varepsilon \Delta$ such that $\Phi(n T; 0, \hat{u}^{(n)})$ converge, as $n \to \infty$, to $w$,
        \item
        $(\check{u}^{(n)})_{n = 1}^{\infty} \subset \varepsilon \Delta$ such that $\Phi(n T; 0, \check{u}^{(n)})$ converge, as $n \to \infty$, to $v$.
    \end{itemize}
    We define
    \begin{equation*}
        u^{(n)} := \begin{cases}
            \hat{u}^{(n)} & \text{if } s_n^{\dagger} \in [0, T/2]
            \\
            \check{u}^{(n)} & \text{if } s_n^{\dagger} \in (T/2, T).
        \end{cases}
    \end{equation*}
    Let $(s_{n_k})_{k = 0}^{\infty}$ be a subsequence for which $\lim\limits_{k \to \infty} s_{n_k}^{\dagger} = 0$.  Then for sufficiently large $k$ we have $u^{(n_k)} = \hat{u}^{(n_k)}$.  It follows from Theorem~\ref{thm:cont-depend} via~\eqref{eq:aux-1} that
    \begin{equation*}
        \lim_{k\to \infty} \Phi(n_k T + s^{\dagger}_{n_k}; 0,u^{(n_k)}) = \lim_{k \to \infty} \Phi(s_{n_k}^{\dagger}; 0, \Phi(n_k T; 0, \hat{u}^{(n_k)})) = \Phi(0; 0, w) = w.
    \end{equation*}
    Let $(s_{n_l})_{l = 0}^{\infty}$ be a subsequence for which $\lim\limits_{l \to \infty} s_{n_l}^{\dagger} = T$.  Then for sufficiently large $l$ we have $u^{(n_l)} = \check{u}^{(n_l)}$.  It follows from Theorem~\ref{thm:cont-depend} via~\eqref{eq:aux-1} that
    \begin{equation*}
        \lim_{l\to \infty} \Phi(n_l T + s^{\dagger}_{n_l}; 0,u^{(n_l)}) = \lim_{lk \to \infty} \Phi(s_{n_l}^{\dagger}; 0, \Phi(n_l T; 0, \check{u}^{(n_l)})) = \Phi(T; 0, v) = w.
    \end{equation*}
    As any convergent subsequence of $(s^{\dagger}_n)_{n = 0}^{\infty}$ tends either to $0$ or to $T$, (I) is proved.

    We proceed to showing (II).  Let $(n_k)_{k = 0}^{\infty}$ and $(u^{(n_k)})_{k = 0}^{\infty} \subset \varepsilon \Delta$ be such that $\lim\limits_{k \to \infty} n_k = \infty$ and  $\lim\limits_{k \to \infty} \Phi(n_k T + s^{\dagger}_{n_k}; 0,u^{(n_k)})  = w$.  We wish to prove that $w \in \widetilde{\Sigma}_s$.  By passing to a subsequence we can assume that $(s^{\dagger}_{n_k})_{k = 0}^{\infty}$ converges to $s^{\dagger}$ as $k \to \infty$.  We have
    \begin{multline*}
        \Phi((n_k + 1) T; 0, u^{(n_k)}) \stackrel{\eqref{eq:process}}{=} \Phi((n_k + 1) T; n_k T + s^{\dagger}_{n_k}; \Phi(n_k T + s^{\dagger}_{n_k}; 0,u^{(n_k)}))
        \\
        \stackrel{\eqref{eq:process-periodic}}{=} \Phi(T; s^{\dagger}_{n_k}; \Phi(n_k T + s^{\dagger}_{n_k}; 0,u^{(n_k)})).
    \end{multline*}
    It follows then from Theorem~\ref{thm:cont-depend} that $\lim\limits_{k \to \infty} \Phi((n_k + 1) T; 0, u^{(n_k)})$ exists (denote it by $z$) and equals $\Phi(T; s^{\dagger}, w)$.  But, as $\Sigma$ is the Kuratowski limit of the sequence $(\Phi(n T; 0, \varepsilon \Delta))_{n = 0}^{\infty}$, it follows from (PK2) that $z \in \Sigma$.  Applying~\eqref{eq:process-inverse} we obtain $w = \Phi(s^{\dagger}; T, z)$.  Let $\zeta$ be the unique element of $\Sigma$ such that $\Phi(T; 0, \zeta) = z$.  Again by~\eqref{eq:process}, $w = \Phi(s^{\dagger}; 0, \zeta)$, that is, $w \in \widetilde{\Sigma}_s$.
\end{proof}

\begin{proposition} \label{prop:construction-2}
    Let $(s_n)_{n = 0}^{\infty} \subset \mytorus$ converge to $s \in \mytorus$.  Then
    \begin{enumerate}
        \item[\textup{(i)}]
        the functions $R_{n T + s_n} \colon \Delta \to (0, \infty)$ converge uniformly, as $n \to \infty$, to $R^{*}_s$;
        \item[\textup{(ii)}]
        the functions $R^{n T + s_n} \colon \Delta \to (0, \infty)$ converge uniformly, as $n \to \infty$, to $R^{*}_s$.
    \end{enumerate}
\end{proposition}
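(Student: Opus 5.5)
We will deduce the uniform convergence of the radial representations from the Hausdorff convergence of Theorem~\ref{thm:construction-2}, in the same way Lemma~\ref{lm:radial} is deduced from Proposition~\ref{prop:sections-continuity-i}. We treat (i) only; (ii) is identical, with $H$ in place of $\varepsilon\Delta$ and Theorem~\ref{thm:construction-2}(ii) in place of (i). Since $\Delta$ is compact and $R^{*}_s$ is continuous, uniform convergence of $R_{nT+s_n}$ to $R^{*}_s$ is equivalent to continuous convergence. It therefore suffices to show that for every sequence $u_n \to u$ in $\Delta$ we have $R_{nT+s_n}(u_n) \to R^{*}_s(u)$.

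First, we need a uniform bound. All the sets $\Phi(nT+s_n^{\dagger};0,\varepsilon\Delta) = \Phi(s_n^{\dagger};0,\cP^n(\varepsilon\Delta))$ are contained in $\bigcup_{\theta\in[0,T]}\Phi(\theta;0,\Lambda)$. This set is compact as the continuous image of $[0,T]\times\Lambda$, and we use here that $\cP^n(\varepsilon\Delta)\subset\Lambda$ by Theorem~\ref{thm:construction-0}(i). Hence the numbers $R_{nT+s_n}(u_n)$ are bounded above. For a lower bound away from zero, we argue by contradiction. If along a subsequence $R_{n_kT+s_{n_k}}(u_{n_k})\to 0$, then the points $R_{n_kT+s_{n_k}}(u_{n_k})u_{n_k}$ would converge to $0$. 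By (PK2) applied to the Hausdorff convergence in Theorem~\ref{thm:construction-2}(i) (via Proposition~\ref{prop:Kuratowski-Hausdorff}), this would give $0\in\widetilde{\Sigma}_s$. That is impossible, since $\widetilde{\Sigma}_s=\{R^{*}_s(v)v: v\in\Delta\}$ with $R^{*}_s>0$.

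Next, take any subsequence along which $R_{n_kT+s_{n_k}}(u_{n_k})\to r\in(0,\infty)$. Then $R_{n_kT+s_{n_k}}(u_{n_k})u_{n_k}\to ru$, and by (PK2) again $ru\in\widetilde{\Sigma}_s$. So $ru=R^{*}_s(v)v$ for some $v\in\Delta$, and applying~\eqref{eq:radial} gives $v=ru/\norm{ru}_1=u$. Hence $r=R^{*}_s(u)$, by uniqueness of the radial representation (Theorem~\ref{thm:tilde-Sigma-further-radial}). Since every convergent subsequence has the same limit and the sequence is bounded, the whole sequence converges to $R^{*}_s(u)$. This proves continuous convergence, and hence the uniform convergence claimed.

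The only delicate point is the lower bound, i.e.\ excluding the approach to $0$. It rests entirely on $0\notin\widetilde{\Sigma}_s$, which we have already established (see also the proof of Theorem~\ref{thm:tilde-Sigma-attractor}). The wrap-around issue at $0\equiv T$ in $\mytorus$ requires no separate treatment here, because Theorem~\ref{thm:construction-2} already handles it.
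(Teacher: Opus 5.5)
Your proof is correct and follows essentially the paper's argument. Both reduce uniform convergence to continuous convergence on the compact $\Delta$, use compactness of $\bigcup_{\theta\in[0,T]}\Phi(\theta;0,\Lambda)$ to extract convergent subsequences, place the limit in $\widetilde{\Sigma}_s$ via (PK2) of Theorem~\ref{thm:construction-2}, and identify it as $R^{*}_s(u)u$ through \eqref{eq:radial}. Your separate exclusion of the limit $0$ only makes explicit the fact $0\notin\widetilde{\Sigma}_s$, which the paper uses silently. You are also right that no extra subsequence making $s_n^{\dagger}$ converge is needed, since Theorem~\ref{thm:construction-2} is already stated for $s_n\to s$ in $\mytorus$.
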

\begin{proof}
    As $\Delta$ is compact and all the functions considered are continuous, it suffices to show that if $(u^{(n)})_{n = 0}^{\infty} \subset \Delta$ is such that $\lim\limits_{n \to \infty} u^{(n)} = u \in \Delta$ then $\lim\limits_{n \to \infty} R_{n T + s_n}(u^{(n)}) = R(u)$.  By passing to a subsequence we can assume that $\lim\limits_{n \to \infty} s^{\dagger}_n$ exists as an element of $[0, T]$.  Denote this limit by $s^{\dagger}$.  As, for any $n \in \NN$, $R_{n T + s_n}(u^{(n)}) u^{(n)}$ belongs to the compact set $\Phi([0, T]; 0, \Lambda)$, by passing (perhaps a second time) to a subsequence we can assume that $w := R_{n_k T + s_{n_k}}(u^{(n_k)}) u^{(n_k)}$ exists.  By Theorem~\ref{thm:construction-2} (see (PK2)), $w \in \widetilde{\Sigma}_s$.  It follows from~\eqref{eq:radial} applied to $R^{*}_s$ that $P(w) = u$, so $R_{n_k T + s_{n_k}}(u^{(n_k)}) u^{(n_k)}$ converge, as $k \to \infty$, to $R^{*}_{s}(u) u$.   Consequently, $R_{n_k T + s_{n_k}}(u^{(n_k)}) = \lVert R_{n_k T + s_{n_k}}(u^{(n_k)}) u^{(n_k)} \rVert_1$ converge, as $k \to \infty$, to $\lVert R^{*}_{s}(u) u \rVert_1 = R^{*}_{s}(u)$.  Since all the accumulation points of the sequence $(R(u^{(n)}))_{n = 0}^{\infty}$ equal $R(u)$, we conclude that $\lim\limits_{n \to  \infty} R_n(u^{(n)}) = R(u)$.   This finishes the proof in the case when $s^{\dagger}$ exists.  In the other case, there are two accumulation points of $(u^{(n)})_{n = 0}^{\infty}$, namely $0$ and $T$.  The previous reasoning holds in the same form for any subsequence $(s_{n_k})_{k = 0}^{\infty}$ for which the representatives $s_{n_k}^{\dagger}$ converge, whether to $0$ or to $T$.

    This proves (i).  The proof of (ii) is analogous.
\end{proof}

\section{Conjugacy between the process on $\widetilde{\Sigma}$ and its orthogonal projection}
\label{sect:conjugacy}
\begin{lemma}
\label{lm:proj-sections-continuity}
    The assignment
    \begin{equation*}
        [\, \mytorus \ni s \mapsto \Pi(\widetilde{\Sigma}_s) \in 2^{V} \,]
    \end{equation*}
    is continuous.
\end{lemma}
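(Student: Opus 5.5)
The plan is to compose two continuous assignments. By Proposition~\ref{prop:sections-continuity-i}, applied to the compact set $D := \Sigma$ (so that $\widetilde{D} = \widetilde{\Sigma}$ and $\widetilde{D}_s = \widetilde{\Sigma}_s$), the map $s \mapsto \widetilde{\Sigma}_s$ is continuous from $\mytorus$ into $(\mathcal{P}(C_{+}), d_H)$. To make Proposition~\ref{prop:hyperspace-continuous} applicable, I first confine everything to a single compact set. Proposition~\ref{prop:sections-continuity-ii} gives that $X := \bigcup_{s \in \mytorus} \widetilde{\Sigma}_s$ is compact. The image $Y := \Pi(X) \subset V$ is then compact as well, because $\Pi$ is linear and hence continuous.

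Next I restrict $\Pi$ to $X$, giving $h := \Pi\restriction_X \colon X \to Y$. Proposition~\ref{prop:hyperspace-continuous} states that $h^{\star} \colon \mathcal{P}(X) \to \mathcal{P}(Y)$ is continuous. Each section $\widetilde{\Sigma}_s$ is a nonempty compact subset of $X$; it is nonempty because $\Sigma \neq \emptyset$, and compact because it is the image of $\Sigma$ under the continuous map $\Phi(s^{\dagger}; 0, \cdot)$. Therefore $s \mapsto \widetilde{\Sigma}_s$ may be regarded as a continuous map into $\mathcal{P}(X)$. One small point needs checking: the Hausdorff metric on $\mathcal{P}(X)$ is the restriction of the Hausdorff metric on compact subsets of $C_{+}$, so continuity is not affected by this change of target space. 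Composing the two maps, $s \mapsto h^{\star}(\widetilde{\Sigma}_s) = \Pi(\widetilde{\Sigma}_s)$ is continuous into $\mathcal{P}(Y) \subset 2^{V}$ with the Hausdorff metric, which is the claim.

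There is no real obstacle here. The one matter needing care is the meaning of the topology on $2^{V}$, which I take to be the Hausdorff metric on nonempty compact subsets. This is legitimate because all sets $\Pi(\widetilde{\Sigma}_s)$ lie in the fixed compact set $Y$, so by Proposition~\ref{prop:Kuratowski-Hausdorff} Hausdorff and Kuratowski convergence coincide there. If one prefers to avoid Proposition~\ref{prop:hyperspace-continuous}, the result also follows directly from the elementary estimate $d_H(\Pi A, \Pi B) \le \norm{\Pi}\, d_H(A, B) \le d_H(A,B)$, which holds because $\Pi$ is an orthogonal projection and hence $1$-Lipschitz.
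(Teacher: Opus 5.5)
Your argument is correct, but it is organized differently from the paper's. The paper's proof repeats the sequential argument of Proposition~\ref{prop:sections-continuity-i} with $\Pi$ inserted. For $s_m \to s$ it checks (PK1) and (PK2) for the sets $\Pi(\widetilde{\Sigma}_{s_m})$ by pulling points back to $\Sigma$, extracting convergent subsequences by compactness, and using continuous dependence (Theorem~\ref{thm:cont-depend}). Hausdorff convergence then follows inside a fixed compact set via Proposition~\ref{prop:Kuratowski-Hausdorff}.

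You instead use Proposition~\ref{prop:sections-continuity-i} as a black box. You post-compose it with the induced hyperspace map $(\Pi{\restriction}_{X})^{\star}$ of Proposition~\ref{prop:hyperspace-continuous}, or more simply with the estimate $d_H(\Pi A, \Pi B) \le d_H(A, B)$, which holds because $\Pi$ is $1$-Lipschitz. Your route avoids duplicating the compactness and subsequence work. In the Lipschitz form it also gives a quantitative statement: $s \mapsto \Pi(\widetilde{\Sigma}_s)$ inherits any modulus of continuity of $s \mapsto \widetilde{\Sigma}_s$, and the same holds with any Lipschitz map in place of $\Pi$. The paper's route is self-contained at the level of sequences but yields nothing beyond your composition.

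One point is implicit in both arguments. Continuity at $0 \equiv T$ in $\mytorus$ uses $\Phi(T; 0, \Sigma) = \cP(\Sigma) = \Sigma$. Invoking Proposition~\ref{prop:sections-continuity-i} with $D = \Sigma$ is legitimate precisely because $\Sigma$ is $\cP$-invariant; for a non-invariant compact $D$, the sections would jump from $\cP(D)$ to $D$ at that point.
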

 (Here $V = \mathbf{e}^\perp = \{x\in \real^d : \sum\limits_{i=1}^d x_i=0\}$).
\begin{proof}
    The proof is an exact copy of the proof of Proposition~\ref{prop:sections-continuity-i}.
\end{proof}

The purpose of the present subsection is to show that $\mathbb{S}$ restricted to $\widetilde{\Sigma}$ is conjugate to (a restriction of) some process generated on $V$ by a time\nobreakdash-\hspace{0pt}periodic system of Carathéodory ODEs.

We (at~first, formally) define, for $s, t \in \RR$,
\begin{equation*}
    \Psi(t; s, v) := \bigl(\Pi \circ \Phi(t;s, \cdot) \circ (\Pi{\restriction}_{\widetilde{\Sigma}_s})^{-1}\bigr)(t; s, v).
\end{equation*}

In order $\Psi$ to be well defined, we need to have $x \in \Pi(\widetilde{\Sigma}_s)$.  Then $\Psi(t; s, v) \in \Pi(\widetilde{\Sigma}_t)$.  As a consequence of~\eqref{eq:process} and the $\mathbb{S}$\nobreakdash-\hspace{0pt}invariance of $\widetilde{\Sigma}$ we have
\begin{equation*}
    \Psi(t_2; t_1, \Phi(t_1; s, v)) = \Psi(t_2; s, v), \quad s, t_1, t_2 \in \RR, \ v \in \Pi(\widetilde{\Sigma}_s).
\end{equation*}
Further, by~\eqref{eq:process-periodic} we have
\begin{equation*}
    \Psi(t; s, v) = \Psi(t + T; s + T, v), \quad s, t \in \RR, \ v \in \Pi(\widetilde{\Sigma}_s).
\end{equation*}
We will embed $\Psi$ into a periodic process $\widetilde{\Psi}$ on $V$ generated by time\nobreakdash-\hspace{0pt}periodic system of Carathéodory ODEs,
\begin{equation*}
    \dot{v} = \widecheck{H}(t, v), \quad v \in V.
\end{equation*}

For the remainder of the subsection we put $K := \bigcup\limits_{s \in \mytorus} \widetilde{\Sigma}_s$.  By Proposition~\ref{prop:sections-continuity-ii}, $K$ is compact.

Let $s \in \mytorus$ be not an exceptional value.  We define $\widecheck{G}(s, \cdot): \Pi(\widetilde{\Sigma}_s) \to V$ by the formula
\begin{equation*}
    \widecheck{G}(s, v) := \Pi \circ G(s, \cdot) \circ (\Pi{\restriction}_{\widetilde{\Sigma}_s})^{-1}(s, {v}), \quad {v} \in \Pi(\widetilde{\Sigma}_s).
\end{equation*}
By~\cite[Lem.~3.21]{B-M-JDEA}, $(\Pi{\restriction}_{\widetilde{\Sigma}_s})^{-1}$ satisfies the Lipschitz property with coefficient $\sqrt{1 + d}$.  We obtain from \ref{Cara-Lipschitz} that $\widecheck{G}_i(s, \cdot)$ satisfies, for each $1 \le i \le d$, the Lipschitz property with coefficient $\sqrt{1 + d} \, {\bigl(\mu_K(s) + \sup\{\, \norm{\xi} : \xi \in K \,\} \, \nu_K(s)\bigr)}$, see~\eqref{eq:Lipschitz}.

Let $(s_0, {v}_0) \in \widetilde{\Sigma}$.  For Lebesgue-a.e.\ $s \in \mytorus$ there holds
\begin{equation*}
    {\DD_1} \Phi(s; s_0, {v}_0) = G(s, \Phi(s; s_0, {v}_0)),
\end{equation*}
where $\DD_1$ denotes the derivative with respect to the first variable.  Applying $\Pi$ to both sides of the above equality and noticing that $\Pi$ and $\DD_1$ commute we obtain
\begin{equation*}
    {\DD_1} \bigl((\Pi \circ \Phi)(s; s_0, {v}_0)\bigr) = (\Pi \circ G(s, \cdot))(\Phi(s; s_0, {v}_0)),
\end{equation*}
and, further,
\begin{equation*}
    {\DD_1} \bigl((\Pi \circ \Phi)(s; s_0, {v}_0)\bigr) = (\Pi \circ G(s, \cdot) \circ (\Pi{\restriction}_{\widetilde{\Sigma}_s})^{-1})\bigl((\Pi \circ \Phi)(s; s_0, {v}_0)\bigr),
\end{equation*}
that is,
\begin{equation*}
    {\DD_1} \bigl((\Pi \circ \Phi)(s; s_0, {v}_0)\bigr) = \widecheck{G}(s, (\Pi \circ \Phi)(s; s_0, {v}_0)).
\end{equation*}
By slight abuse of language, we can say that $\Psi$ is the process generated by the time\nobreakdash-\hspace{0pt}periodic system of Carathéodory ODEs $\dot{v} = \widecheck{G}(t, v)$.

Our construction of the extension $\widecheck{H}$ of $\widecheck{G}$ to the whole of $\mytorus \times V$ follows that in the proof of~\cite[Thm.~1]{McSh}.  Namely, for any $s \in \mytorus$ we extend $\widecheck{G}(s, \cdot) \colon \Pi(\widetilde{\Sigma}_s) \to V$ to a  globally Lipschitz $\widecheck{H}(s, \cdot) \colon V \to V$, and we do it in such a way that for each $v \in V$ the map $\widecheck{H}(s, \cdot) \colon \mytorus \to V$ is measurable.

Let $\{\xi^{({m})}\}_{{m} = 1}^{\infty}$ be a dense subset of $\Pi(\widetilde{\Sigma}_0)$.  For each ${m} \in \NN$ put
\begin{equation*}
  \xi_s^{({m})} := (\Pi \circ \Phi(s; 0, \cdot) \circ (\Pi{\restriction}_{\widetilde{\Sigma}_0})^{-1})(\xi^{({m})}), \quad s \in \mytorus.
\end{equation*}
For each $s \in \mytorus$ the mapping $\Pi \circ \Phi(s; 0, \cdot) \circ (\Pi{\restriction}_{\widetilde{\Sigma}_0})^{-1}$ is a homeomorphism of $\Pi(\widetilde{\Sigma}_0)$ onto $\Pi(\widetilde{\Sigma}_s)$.  Consequently, $\{\xi_s^{({m})}\}_{{m} = 1}^{\infty}$ is a dense subset of $\Pi(\widetilde{\Sigma}_s)$.  Moreover, the assignment $\bigl[\, \mytorus \ni s \mapsto \xi_s^{({m})} \in V \,\bigr]$ is continuous, for each ${m} \in \NN$.

For a non-exceptional $s \in \mytorus$, denote by $\lambda(s)$ a Lipschitz coefficient of $\widetilde{G}(s, \cdot)$.  As mentioned before, for $\lambda(s)$ we can take $\sqrt{1 + d} \, \bigl(\mu_K(s) + \sup\{\, \norm{\xi} : \xi \in K \,\} \, \nu_K(s)\bigr)$.  As a consequence of~\ref{Cara-bounded} and~\ref{Cara-Lipschitz}, $\lambda(\cdot)$ belongs to $L_1([0, T])$.

For $1 \le i \le d$, a non-exceptional $s \in \mytorus$ and $m \in \NN$ we define a function $\overline{H}_i \colon \mytorus \times V \to \RR$ by
\begin{equation*}
        \overline{H}^{(m)}_i(s, v) := \widecheck{G}_i(s, \xi_s^{(m)}) - \lambda(s) \, \norm{v - \xi_s^{(m)}}.
\end{equation*}
Further, we set
\begin{equation*}
    \overline{H}_i(s, v) := \sup\{\, \overline{H}^{(m)}_i(s, v): m \in \NN \,\}, \quad v \in V.
\end{equation*}
We claim that $\overline{H}_i(s, \cdot)$ is Lipschitz, with Lipschitz constant $\le \lambda(s)$.  Indeed, let $v, w \in V$ and assume for definiteness that $\overline{H}_i(s, w) \ge \overline{H}_i(s, v)$.  We estimate
\begin{equation*}
    \begin{aligned}
        0 & \le \overline{H}_i(s, w) - \overline{H}_i(s, v)
        \\
        & = \sup\{\, \widecheck{G}_i(s, \xi_s^{(m)}) - \lambda(s) \, \norm{w - \xi_s^{(m)}}: m \in \NN \,\}
        \\
        & \phantom{\le} - \sup\{\, \widecheck{G}_i(s, \xi_s^{(m)}) - \lambda(s) \, \norm{v - \xi_s^{(m)}}: m \in \NN \,\}
        \\
        & \le \sup\{\, \bigl(\widecheck{G}_i(s, \xi_s^{(m)}) - \lambda(s) \, \norm{w - \xi_s^{(m)}} \bigr) - \bigl( \widecheck{G}_i(s, \xi_s^{(m)}) - \lambda(s) \, \norm{v - \xi_s^{(m)}} \bigr): m \in \NN \,\}
        \\
        & = \sup\{\, \lambda(s) \bigl( \norm{v - \xi_s^{(m)}} - \norm{w - \xi_s^{(m)}} \bigr): m \in \NN \,\} \le \lambda(s) \norm{w - v}.
    \end{aligned}
\end{equation*}
For any $m \in \NN$ we have $\overline{H}^{(m)}_i(s, \xi_s^{(m)}) = \widecheck{G}_i(s, \xi_s^{(m)})$.  We claim that $\overline{H}_i(s, \xi_s^{(m)}) = \overline{H}^{(m)}_i(s, \xi_s^{(m)})$.  Indeed, if not then there is $p \in \NN$, $p \ne m$, such that $\overline{H}^{(p)}_i(s, \xi_s^{(m)}) > \overline{H}^{(m)}_i(s, \xi_s^{(m)})$.  Observe that
\begin{equation*}
    \widecheck{G}_i(s, \xi_s^{(p)}) = \overline{H}^{(p)}_i(s, \xi_s^{(m)}) + \lambda(s) \norm{\xi_s^{(m)} - \xi_s^{(p)}},
\end{equation*}
consequently $\widecheck{G}_i(s, \xi_s^{(p)}) - \widecheck{G}_i(s, \xi_s^{(m)}) = \overline{H}^{(p)}_i(s, \xi_s^{(m)}) - \overline{H}^{(m)}_i(s, \xi_s^{(m)}) + \lambda(s) \norm{\xi_s^{(m)} - \xi_s^{(p)}} > \lambda(s) \norm{\xi_s^{(m)} - \xi_s^{(p)}}$.  The case $\xi_s^{(m)} = \xi_s^{(p)}$ implies $0 > 0$, so $\norm{\xi_s^{(m)} - \xi_s^{(p)}} > 0$, which contradicts the fact that $\lambda(s)$ is a Lipschitz coefficient for $\widecheck{G}_i(s, \cdot)$.

Since the Lipschitz functions $\overline{H}_i(s, \cdot){\restriction}_{\Pi(\widetilde{\Sigma}_s)}$ and $\widecheck{G}_i(s, \cdot)$ take the same value at any member of the dense subset $\{\xi_s^{(m)}\}_{m = 1}^{\infty}$ of $\Pi(\widetilde{\Sigma}_s)$, there holds $\overline{H}_i(s, v) = \widecheck{G}_i(s, v)$ for any $v \in \Pi(\widetilde{\Sigma}_s)$.

For an exceptional $s \in \mytorus$ we just put $\overline{H}_i(s, v) = 0$ for all $v \in V$, $1 \le 1 \le d$.

For each ${v} \in V$, each $1 \le i \le d$ and each $m \in \NN$ the mapping ${\overline{H}}^{{(m)}}_i(\cdot, {v})$ is measurable, hence ${\overline{H}}_i(\cdot, {v})$, is measurable, too.

We define
\begin{equation*}
    \widecheck{H} := \Pi \circ (\overline{H}_1, \ldots, \overline{H}_d).
\end{equation*}
We list here properties of $\widecheck{H} \colon \mytorus \times V \to V$:
\begin{itemize}
    \item
    for each $v \in V$ the map $\widecheck{H}(\cdot, v) \colon \mytorus \to V$ is measurable,
    \item
    for a.e.\ $s \in \mytorus$ the map $\widecheck{H}(s, \cdot) \colon V \to V$ satisfies Lipschitz condition, with Lipschitz constant belonging to $L_1(\mytorus)$,
    \item
    for a.e.\ $s \in \mytorus$ there holds $\widecheck{H}(s, v) = \widecheck{G}(s, v)$ for all $v \in \Pi(\widetilde{\Sigma}_s)$.
\end{itemize}

Therefore, $\widecheck{H}$ is an extension of $\widecheck{G}$ to $\mytorus \times V$ generating a periodic local process $\widecheck{\Psi}$ on $V$.  By \cite[Thm.~II.3.2]{Reid}, the process $\widecheck{\Psi}$ is indeed global: $\widecheck{\Psi}(t; s, \cdot)$ is defined for any $s, t \in \RR$.  Consequently, $\widecheck{\Psi}$ can be legitimately called an extension of $\Psi$.

To conclude the present subsection we formulated what we have just obtained as the following.
\begin{theorem}
    In the notation introduced above there holds
    \begin{equation*}
        \Pi(\mySemi(t, (s, x))) =  \widecheck{\Psi}(t; s, \Pi x), \quad t \in \RR, \ s \in \mytorus, \ x \in \widetilde{\Sigma}_s,
    \end{equation*}
    and
    \begin{equation*}
        (\Pi{\restriction}_{\widetilde{\Sigma}_{s}})^{-1}(\widecheck{\Psi}(t ; s, v)) =  \mySemi(t, (s, (\Pi{\restriction}_{\widetilde{\Sigma}_{s}})^{-1}({v}))), \quad t \in \RR, \ s \in \mytorus, \ v \in \Pi(\widetilde{\Sigma}_s).
    \end{equation*}
    In other words, the diagram
    \begin{equation*}
        \begin{tikzcd}[column sep=huge] 
       \widetilde{\Sigma}_s \arrow{r}{\Phi(t;s, \cdot)} \arrow[swap]{d}{\Pi{\restriction}_{\widetilde{\Sigma}_s}} & \widetilde{\Sigma}_t \arrow{d}{\Pi{\restriction}_{\widetilde{\Sigma}_t}}&  \\
        \Pi(\widetilde{\Sigma}_s) \arrow{r}{\widecheck{\Psi}(t;s, \cdot){\restriction}_{\Pi(\widetilde{\Sigma}_s)}} & \Pi(\widetilde{\Sigma}_t)
        \end{tikzcd}
    \end{equation*}
    commutes.
\end{theorem}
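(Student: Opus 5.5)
The plan is a uniqueness argument for Carathéodory solutions on $V$. Fix $s \in \mytorus$ (with representative $s^\dagger$) and $x \in \widetilde{\Sigma}_s$, and put $v_0 := \Pi x$. I will show that the curve
\begin{equation*}
\gamma(t) := \Pi\bigl(\Phi(s^\dagger + t; s^\dagger, x)\bigr), \qquad t \in \RR,
\end{equation*}
is a solution of $\dot v = \widecheck{H}(s^\dagger + t, v)$ with $\gamma(0) = v_0$. Here $\Phi(s^\dagger+t;s^\dagger,x)$ is defined for all real $t$ because of the $\mySemi$-invariance of $\widetilde{\Sigma} \subset \widetilde{\Gamma}$ and \eqref{eq:process-on-Gamma}.

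First, $\gamma$ is absolutely continuous. Indeed, $\Phi(\cdot\,; s^\dagger, x)$ is absolutely continuous on compact intervals by \eqref{eq:integral-equation}, and $\Pi$ is linear. By the computation preceding the theorem, for a.e.\ $\theta$ we have
\begin{equation*}
\dot\gamma(\theta) = \widecheck{G}(s^\dagger+\theta, \gamma(\theta)).
\end{equation*}
This uses the fact that $\Phi(s^\dagger+\theta; s^\dagger, x) \in \widetilde{\Sigma}_{s+\theta}$, so that $(\Pi\restriction_{\widetilde{\Sigma}_{s+\theta}})^{-1}(\gamma(\theta)) = \Phi(s^\dagger+\theta; s^\dagger,x)$. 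For a.e.\ $\theta$ the value $s^\dagger + \theta$ is non-exceptional, so $\widecheck{H}(s^\dagger+\theta,\cdot)$ agrees with $\widecheck{G}$ on $\Pi(\widetilde{\Sigma}_{s+\theta})$, by the third listed property of $\widecheck{H}$ (note that $\Pi \circ \Pi = \Pi$, so applying $\Pi$ to $(\overline{H}_1,\dots,\overline{H}_d)$ returns $\widecheck{G}$). Consequently $\gamma$ satisfies the integral equation $\gamma(t) = v_0 + \int_0^t \widecheck{H}(s^\dagger+\theta,\gamma(\theta))\,\dd\theta$.

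Next I invoke uniqueness. The field $\widecheck{H}$ satisfies the Carathéodory conditions together with an $L_1$ Lipschitz bound, so by \cite[Thms.~I.5.1--I.5.3]{Hale-ODEs} (or \cite{Reid}) solutions of the initial value problem on $V$ are unique. Hence $\gamma(t) = \widecheck{\Psi}(s^\dagger+t; s^\dagger, v_0)$ for every $t \in \RR$. Reading $\widecheck{\Psi}(t;s,\cdot)$ in the statement as the time-$t$ evolution starting at time $s$, this gives the first identity, since $\Pi(\mySemi(t,(s,x)))$ is precisely $\gamma(t)$ placed over the base point $s+t$. For negative $t$ I use the extension of $\mySemi$ to $\widetilde{\Gamma}$ given by \eqref{eq:flow-on-Gamma}.

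The second identity follows from the first by applying $(\Pi\restriction_{\widetilde{\Sigma}_{s+t}})^{-1}$ to both sides. This is legitimate because $\mySemi(t,(s,x))$ lies in $\widetilde{\Sigma}$ with fibre component in $\widetilde{\Sigma}_{s+t}$, and $\Pi$ is injective on each section by \ref{P-Lipschitz} (the sections are graphs over $\Pi$). Substituting $x = (\Pi\restriction_{\widetilde{\Sigma}_s})^{-1}(v)$ then gives the stated formula. Commutativity of the diagram is simply the conjunction of the two identities.

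The main obstacle is the bookkeeping with exceptional times and with time-dependent domains. The equality $\widecheck{H} = \widecheck{G}$ only holds on $\Pi(\widetilde{\Sigma}_\sigma)$ for a.e.\ $\sigma$, so I must make sure the exceptional null set in $\sigma$ is independent of the trajectory. It is: it is the single null set of exceptional values fixed in Section~\ref{Sec2}. I must also make sure that $\gamma(\theta)$ lies in the correct section for every $\theta$, which is exactly the invariance guaranteed by Lemma~\ref{lm:bar-D-invariant} applied to $\Sigma$.
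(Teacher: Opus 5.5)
Your proposal is correct and follows the paper's route. The paper shows that $\Pi\circ\Phi$ along orbits in $\widetilde{\Sigma}$ solves $\dot v = \widecheck{G}(t,v)$, extends $\widecheck{G}$ to the Carathéodory field $\widecheck{H}$ with $L_1$ Lipschitz coefficient, and then simply asserts that $\widecheck{\Psi}$ extends $\Psi$; you spell out the uniqueness step and the inversion via injectivity of $\Pi$ on sections that the paper leaves implicit, and your reading of the time arguments (evolving from $s$ to $s+t$ and inverting on $\widetilde{\Sigma}_{s+t}$) is the right way to make the statement's mixed notation consistent.
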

We can say that (a sort of) topological conjugacy holds between the flow on $\widetilde{\Sigma}$ and its orthogonal projection along $\ev$.

\subsubsection*{Funding}
No funding.

\end{document}